\documentclass[
    british,
    12pt,
    reqno
    ]{amsart}

\usepackage[T1]{fontenc}
\usepackage[utf8]{inputenc}
\usepackage{geometry}
\usepackage[dvipsnames]{xcolor}
\usepackage[colorlinks, pagebackref]{hyperref}
\usepackage{booktabs} 
\usepackage[shortlabels]{enumitem} 

\usepackage{amsmath}
\usepackage{amsthm}
\usepackage{amssymb}
\usepackage{mathtools} 

\theoremstyle{plain}
\newtheorem{thm}{Theorem}[section]

\newtheorem{cor}[thm]{Corollary}
\newtheorem{lem}[thm]{Lemma}
\newtheorem{prop}[thm]{Proposition}

\theoremstyle{definition}
\newtheorem{defn}[thm]{Definition}
\newtheorem{ex}[thm]{Example}

\theoremstyle{definition} 

\newtheorem{remark}[thm]{Remark}

\numberwithin{equation}{section}
\numberwithin{figure}{section}
\mathtoolsset{showonlyrefs=true} 

\newcommand{\bC}{\mathbb C}

\newcommand{\bE}{\mathbb E}

\newcommand{\bN}{\mathbb N}

\newcommand{\bQ}{\mathbb Q}
\newcommand{\bR}{\mathbb R}
\newcommand{\bZ}{\mathbb Z}
\newcommand{\bT}{\mathbb T}

\newcommand{\ba}{\mathbf a}
\newcommand{\bb}{\mathbf b}
\newcommand{\bc}{\mathbf c}

\newcommand{\bg}{\mathbf g}

\newcommand{\bm}{\mathbf m}
\newcommand{\bn}{\mathbf n}

\newcommand{\bq}{\mathbf q}

\newcommand{\bu}{\mathbf u}
\newcommand{\bv}{\mathbf v}
\newcommand{\bx}{\mathbf x}

\newcommand{\bz}{\mathbf z}

\newcommand{\bzero}{\mathbf 0}

\newcommand{\balp}{{\boldsymbol{\alp}}}
\newcommand{\bbet}{{\boldsymbol{\beta}}}

\newcommand{\bgam}{{\boldsymbol{\gam}}}

\newcommand{\beps}{{\boldsymbol \eps}}

\newcommand{\btet}{{\boldsymbol{\theta}}}

\newcommand{\fF}{\mathfrak F}

\newcommand{\fL}{\mathfrak L}

\newcommand{\cA}{\mathcal A}
\newcommand{\cB}{\mathcal B}
\newcommand{\cC}{\mathcal C}

\newcommand{\cF}{\mathcal F}

\newcommand{\cI}{\mathcal I}

\newcommand{\cK}{\mathcal K}

\newcommand{\cP}{\mathcal P}

\newcommand{\cV}{\mathcal V}
\newcommand{\cW}{\mathcal W}

\newcommand{\alp}{{\alpha}}
\newcommand{\bet}{{\beta}}
\newcommand{\gam}{{\gamma}}
\newcommand{\del}{{\delta}}
\newcommand{\eps}{{\varepsilon}}

\newcommand{\lam}{{\lambda}}

\newcommand{\Del}{{\Delta}}
\newcommand{\Ome}{{\Omega}}

\renewcommand{\epsilon}{\varepsilon}

\renewcommand{\le}{\leqslant}
\renewcommand{\leq}{\leqslant}
\renewcommand{\ge}{\geqslant}
\renewcommand{\geq}{\geqslant}

\DeclareMathOperator{\lcm}{lcm}
\DeclareMathOperator{\meas}{meas}
\DeclareMathOperator{\sinc}{sinc}

\DeclareMathOperator{\vol}{vol}
\renewcommand{\d}{\mathrm{d}} 

\newcommand{\N}{\mathbb N}
\newcommand{\Z}{\mathbb Z}

\newcommand{\R}{\mathbb R}
\newcommand{\T}{\mathbb T}

\newcommand{\defeq}{\coloneq} 
\newcommand{\divs}{\mid} 
\newcommand{\ndivs}{\nmid} 
\newcommand{\edivs}{\parallel} 
\DeclarePairedDelimiter{\norm}{\lVert}{\rVert} 
\DeclarePairedDelimiter{\abs}{\lvert}{\rvert} 
\newcommand{\indicator}{1_} 
\renewcommand{\vec}[1]{\boldsymbol{\mathbf{#1}}} 
\newcommand{\mmod}{\: \mathrm{mod} \:}
\renewcommand{\mod}{\mmod} 

\newcommand{\F}{\mathfrak F} 
\renewcommand{\L}{\mathfrak L} 
\newcommand{\Ma}{\mathfrak M} 
\newcommand{\ma}{\mathfrak m} 
\newcommand{\Na}{\mathfrak N} 
\newcommand{\na}{\mathfrak n} 
\newcommand{\unf}{{\mathrm{unf}}}
\newcommand{\str}{{\mathrm{str}}}
\newcommand{\sml}{{\mathrm{sml}}}
\newcommand{\FL}{{\mathrm{FL}}}

\begin{document}

\author{Sam Chow \and Sean Prendiville \and Santiago Vazquez}

\address{Mathematics Institute, Zeeman Building, University of Warwick, Coventry CV4 7AL, United Kingdom}
\email{sam.chow@warwick.ac.uk}

\address{School of Mathematical Sciences, Lancaster University, UK}
\email{s.prendiville@lancaster.ac.uk}

\address{Department of Mathematics, Strand Building, King's College London, London WC2R 2LS, United Kingdom}
\email{santiago.vazquez\_saez@kcl.ac.uk}

\title[Arithmetic regularity approach]{Arithmetic regularity as an alternative to transference}
\subjclass[2020]{11B30 (primary); 11D45, 11D72, 11L15 (secondary)}
\keywords{arithmetic combinatorics, circle method, equidistribution}

\begin{abstract}
Since Green (2005), the Fourier-analytic transference principle has dominated the landscape of combinatorial theorems relative to sparse arithmetic sets. We demonstrate a different approach using arithmetic regularity. This is more versatile and has the potential to succeed when no obvious `dense model' is forthcoming. Moreover, we contend that, just as the traditional circle method disassembles an arithmetic problem into real and $p$-adic parts which can be solved individually, the arithmetic regularity method generalises this to yield an additional `combinatorial' factor. This framework leads directly to a correct lower bound on the number of configurations in a dense set. We illustrate this using a system comprising a linear equation together with a higher-degree equation.
\end{abstract}

\maketitle


\section{Introduction}

Roth \cite{Rot1953} famously showed that if $\cA \subseteq \bN$ has positive upper density 
\[
\limsup_{N \in \bN} 
\frac{|\cA \cap [N]|}{N} > 0,
\]
then $\cA$ contains non-trivial three-term arithmetic progressions, where 
\[
[N] = \{ 1, 2, \ldots, N \}.
\]
This has inspired many outstanding works 
considering sparse analogues, e.g. \cite{BMS2018, CFSZ2021, CG2016, GT2008, ST2015}.

A non-trivial three-term arithmetic progression $(x,y,z)$ is precisely a solution to $x + z = 2y$ with distinct variables. Roth's density-increment argument extends to homogeneous linear equations
\[
a_1 x_1 + \cdots + a_s x_s = 0,
\]
where $s \ge 3$ and $a_1, \ldots, a_s$ are non-zero integers summing to zero. The condition $a_1 + \cdots + a_s = 0$ is clearly necessary, since arithmetic progressions have positive upper density.

Higher-degree equations were initially approached by including additional equations to form a translation-invariant system, i.e. if $\bx$ is a solution then so is $a \bx + b \boldsymbol 1$ for $a, b \in \bZ$. Translation-invariant systems are amenable to Roth's approach or, more simply, an application of Szemer\'edi's theorem \cite{Sze1975}. These observations were exploited by Keil \cite{Kei2014} and Henriot \cite{Hen, Hen2015}.

In \cite{BP2017}, Browning and the second author brought the Fourier-analytic transference principle to bear on a problem not amenable to this approach. The Fourier-analytic transference principle was introduced by Green in \cite{Gre2005}, and used by Green--Tao in their renowned work on primes and arithmetic progressions \cite{GT2008}. Although \cite{BP2017} rested heavily on the foundations laid by Green, it led to an increase in similar applications, in part due to the following:
\begin{enumerate}[(i)]
\item It operated in what was then an unusually sparse setting, albeit preceded by work of Harper \cite{Har2016}.
\item The $W$-trick was necessarily more involved.
\item It packaged these ideas into a general framework which appeared to lead to other applications. We also refer the reader to the second author's expository piece~\cite{Pre2017}.
\end{enumerate}
The key proposition of \cite{BP2017} was refined in \cite[Theorem 4.3]{CCH} using recent advances in and around the quantitative theory of Roth's theorem \cite{FHHK2024, KM2023, Kos}.

In the aforementioned work, Browning and the second author showed that if $\cA \subseteq \bN$ has positive upper density then
\[
a_1 x_1^2 + \cdots + a_s x_s^2 = 0
\]
has a solution over $\cA$, whenever $s \ge 5$ and $a_1, \ldots, a_s$ are non-zero integers summing to zero. This was extended to higher degrees and to prime powers in \cite{Cho2018}, to systems of diagonal equations of the same degree in \cite{Cha2022}. It was then generalised to sums of values of 
intersective polynomials \cite{CC, CC2025}. Similar problems were tackled using the transference principle in \cite{CCH, Chi2019, RSZZ2025, Sal2020}. There are also Ramsey-theoretic analogues, for which the transference principle has played a pivotal role \cite{Cha2022, CLP2021, Pre2021, Sch2021}.

Despite these manifold applications, the transference approach has at least one clear defect, namely that an ad-hoc `dense model problem' needs to be concocted, a problem whose solutions can then be transferred to the sparse setting. Unfortunately, it is not always apparent whether an appropriate dense problem exists, as is the case for the main problem we discuss in the present work. We hope to convince the reader that applying arithmetic regularity naturally outputs the appropriate dense problem, without the need for any sophisticated thinking. We discuss our arithmetic regularity approach after presenting the application.

\subsection{A system of equations in dense variables}

Let $k \geq 2$ and $s \geq s_0$ be integers. Here, $s_0 =s_0(k)$ is defined by
\begin{center}
    \label{table:s_0-small-k}
    \begin{tabular}{cccccc}
        \toprule
        $k$ & 2 & 3 & 4 & 5 & 6 \\ \midrule
        $s_0(k)$ & 7 & 11 & 17 & 25 & 35 \\
        \bottomrule
    \end{tabular}
\end{center}
for $2\leq k \leq 6$,
and by
\begin{equation}
    \label{eq:s_0-large-k-def}
    s_0(k) = k(k-1) + 2\lfloor\sqrt{2k+2}\rfloor + \theta(k)
\end{equation}
for $k\geq 7$, where
\begin{equation}
    \label{eq:theta-k-def}
    \theta(k) =
    \begin{cases}
        -1, & \text{when } 2k+2 \leq \lfloor\sqrt{2k+2}\rfloor^{2} + \lfloor\sqrt{2k+2}\rfloor,\\
        1, & \text{when } 2k+2 > \lfloor\sqrt{2k+2}\rfloor^{2} + \lfloor\sqrt{2k+2}\rfloor.
    \end{cases}
\end{equation}
Let $\ba, \bb \in \bZ_{\ne 0}^s$ with
\begin{equation}\label{zerosum}
    a_1 + \cdots + a_s = b_1 + \cdots + b_s = 0.
\end{equation}
Let $\cV$ be the affine variety defined by
\[
a_1 x_1^k + \cdots + a_s x_s^k = b_1 x_1 + \cdots + b_s x_s =  0,
\]
and let $\cV'$ be the affine variety defined by
\begin{equation}
\label{variety}
a_1 x_1^j + \cdots + a_s x_s^j = 0 \qquad (1 \le j \le k).
\end{equation}

\begin{thm} 
\label{MainThm}
Assume that
\begin{enumerate}[label=(\Alph*), ref={\Alph*}]
    \item \label{case:neq} $a_i b_j \ne  a_j b_i$ whenever $i \ne j$, \textbf{or}
    \item \label{case:eq} $\ba = \bb$, and $\cV'$ has a non-singular point over $\bR$ and over $\bQ_p$ for each prime $p$.
\end{enumerate}
Let $\del > 0$, and let $\cA \subseteq [N]$ with $\# \cA \ge \del N$. Then
\[
\# \cA^s \cap \cV \gg_{\cV, \del} N^{s-k-1}.
\]
\end{thm}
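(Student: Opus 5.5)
We will prove the theorem by an arithmetic regularity argument combined with the circle method. Define the counting operator
\[
\Lambda(f_1,\dots,f_s) = \sum_{\substack{\bx \in [N]^s \\ \bx \in \cV}} f_1(x_1)\cdots f_s(x_s),
\]
so that $\#\cA^s\cap\cV = \Lambda(1_\cA,\dots,1_\cA)$.

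\textbf{Step 1: control by a Fourier norm.} We first show that $\Lambda$ is controlled by a suitable Fourier-type norm of its arguments. Write
\[
\Lambda = \int_{\T^2} \prod_i \hat f_i(a_i\alp, b_i\bet)\,\d\alp\,\d\bet,
\qquad \hat f(\alp,\bet)=\sum_{x\le N} f(x)e(\alp x^k+\bet x).
\]
Take one factor out in $L^\infty$ and bound the rest by H\"older. This uses restriction/mean-value estimates for the Weyl sums $\sum_{x} f(x) e(\alp x^k+\bet x)$ with $1$-bounded weights. These come from Vinogradov mean value theorem (efficient congruencing/decoupling) bounds for the system of degree $\{1,k\}$. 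The threshold $s_0(k)$, including the $\lfloor\sqrt{2k+2}\rfloor$ term, should be exactly the number of variables needed for an $\epsilon$-free restriction estimate at exponent $s-1$ (or $s-2$) beyond the critical exponent $k(k+1)/2+\dots$. It then follows that $\Lambda$ changes by $o(N^{s-k-1})$ when a function with small $\sup_{\alp,\bet}|\hat f(\alp,\bet)|$ is discarded. Under hypothesis (\ref{case:neq}), the pairs $(a_i,b_i)$ are pairwise non-proportional. This lets us separate variables in the $(\alp,\bet)$ integral, and every index is then controlled by its own Fourier coefficient.

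\textbf{Step 2: arithmetic regularity.} Next we apply an arithmetic regularity lemma adapted to the phases $\alp x^k+\bet x$. This gives a decomposition
\[
1_\cA = f_{\str}+f_{\sml}+f_{\unf},
\]
in which:
\begin{itemize}
\item $f_{\str}$ is a nonnegative function of bounded complexity, of the shape $x\mapsto F(x \bmod q, x/N)$ with $q\ll_\del 1$ and $F$ Lipschitz, obtained by averaging $1_\cA$ over a Bohr-type partition (a $W$-trick style factor);
\item $f_{\sml}$ is small in $L^2$;
\item $f_{\unf}$ has tiny Fourier norm, relative to the complexity of $f_{\str}$.
\end{itemize}
Expanding $\Lambda$ multilinearly, Step 1 kills every term containing $f_{\unf}$. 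Terms containing $f_{\sml}$ are handled by an $L^2$ bound combined with restriction, exploiting the standard regularity trick that the error tolerances are chosen after the complexity.

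\textbf{Step 3: the structured count (main obstacle).} The main term is $\Lambda(f_{\str},\dots,f_{\str})$. We evaluate it by the circle method, with congruence and archimedean weights inserted. This produces
\[
N^{s-k-1}\sum_{r \bmod q} \int \prod_i F(r_i,t_i)\,\mathfrak S_r\,\mathfrak J\,\d\sigma,
\]
a combination of local densities of $\cV$ restricted to residue classes and real boxes. This is where the paper's `combinatorial factor' appears: we must show this weighted count is $\gg_\del 1$.

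Since $f_{\str}\ge0$ has mean $\ge\del$, we choose by pigeonhole a residue class $r$ and a real interval on which $F\ge\del/2$. We then need a non-singular solution of $\cV$ with all coordinates in that single class and that single real interval, so that the singular series and singular integral are positive there.

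\begin{itemize}
\item Under (\ref{case:neq}), the diagonal point $\bx=(x,\dots,x)$ lies on $\cV$ by \eqref{zerosum}. Non-proportionality of the pairs $(a_i,b_i)$ makes the Jacobian rows $(ka_ix^{k-1},b_i)$ independent, so this point is non-singular. Hensel's lemma and the implicit function theorem then give positive local densities localised near the diagonal.
\item Under (\ref{case:eq}), the diagonal is singular. We instead dilate and translate a given non-singular point of $\cV'$: such a point is still a solution of $\cV$, by the binomial expansion of $(y+\lambda x_i)^j$. This places it inside the class $r \bmod q$ and near $t$. Positivity of the densities follows from the assumed local solubility.
\end{itemize}

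I expect the hardest parts to be making the density bounds uniform in $q$, which forces a careful $W$-trick, and establishing the restriction estimate at exactly $s_0(k)$.
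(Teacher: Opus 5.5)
There is a genuine gap, and it sits at the step the paper treats as its main idea. The decomposition you posit in Step~2 does not exist. You take $f_{\str}(x)=F(x \bmod q, x/N)$, which carries only rational and archimedean structure. Take for instance $\cA=\{n\le N:\{n\sqrt2\}<1/2\}$. Then $|\sum_{n\in\cA}e(-n\sqrt2)|=(1/\pi+o(1))N$. On the other hand, $\sum_{n\le N}F(n\bmod q,n/N)e(-n\sqrt2)=O_{q,F}(1)$ because $\sqrt2$ is badly approximable, and $f_{\sml}$ contributes at most $\|f_\sml\|_1\le\eps N$. So $f_{\unf}$ would have a large linear Fourier coefficient.

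In Lemma~\ref{ARL}, the rational and major-arc parts of the Bohr frequencies (Lemma~\ref{Ratner}) are absorbed by passing to one short progression $\cP$. An irrational part survives, however: $f_{\str}(n)=F(\btet^{(1)}n,\btet^{(k)}n^k)$ on $\cP$. Here $\btet^{(j)}$ is quantitatively irrational and $F$ is a trigonometric polynomial on a torus whose dimension and degree are bounded only by the complexity $M$.

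Your Step~3 therefore omits the genuinely combinatorial factor. The paper first shows (Lemma~\ref{equi2}, via Weyl-type inverse bounds and Fourier control) that $(\btet^{(1)}n_i,\btet^{(k)}n_i^k)_{i\le s}$ equidistributes over $\cV(\cP)$ on the subtorus $\sum_i b_i\bgam^{(i)}=\bzero$, $\sum_i a_i\tilde\bgam^{(i)}=\bzero$ forced by $\fL(\bn)=\fF(\bn)=0$. This gives $T_\cP(f_\str)\approx T(1_\cP)\int_{\cK(\ba,d_k)}\int_{\cK(\bb,d_1)}\prod_iF(\bgam^{(i)},\tilde\bgam^{(i)})$. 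It then bounds this integral below by a compact-group Szemer\'edi theorem (Corollary~\ref{CompactRothCor}), with constant $\eta(\del)$ independent of dimension and degree. The residue-class and real-interval densities you call the combinatorial factor are merely the $p$-adic and real densities inside $T(1_\cP)$.

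Pigeonholing a single cell cannot replace the compact theorem. Forcing every $(\btet^{(1)}n_i,\btet^{(k)}n_i^k)$ into one box where $F\ge\del/2$ captures only a proportion of $T(1_\cP)$ that decays with $M$. That is swamped by the $\ell^2$ error $\eps T(1_\cP)$, because $\eps$ must be fixed before $M$ is produced. Salvaging the approach would need $\ell^2$ control relative to Bohr cells, and hence a further equidistribution theory for the auxiliary system.

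Your treatment of $f_{\sml}$ also fails as stated. Combining Parseval with restriction gives $T(\bg)\le\|g_1\|_2\,\|\prod_{i\ge2}\tilde g_i\|_{L^2}\ll\eps N^{s-1-k/2}$ when $\|g_1\|_2\le\eps\sqrt N$. That exceeds the target $N^{s-k-1}$ by a factor $\eps N^{k/2}$. Instead, the paper applies Cauchy--Schwarz in $x_1$ (Lemma~\ref{l2}). This needs the upper bound $T_2\ll\#\cP^{-1}T(1_\cP)^2$ for the $(2s-1)$-variable system of pairs of solutions sharing $x_1$, uniformly in $\cP$. That is \S\ref{S:Aux}, a second full circle-method analysis with upper bounds for its singular series and integral, used alongside the uniform lower bounds for $T(1_\cP)$ from \S\ref{S:QWA}. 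Since this control is relative to $\cP$, one first uses $f\ge0$ to discard everything outside a single progression on which $f_\sml$ is relatively small; the pigeonholing in Lemma~\ref{ARL} arranges this.

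Your local-solubility input does match the paper: the diagonal point in Case~\ref{case:neq}, and a translated and dilated smooth point of $\cV'$ in Case~\ref{case:eq}. So does your control of $f_\unf$, though non-proportionality plays no role there. H\"older with the restriction estimate at exponent $s_0-\tfrac12$ controls every index in both cases.
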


\begin{remark}
In the case $k = 3$, Wooley \cite{Woo2015} gives a suitable mean value estimate below exponent 10, using work of Br\"udern and Robert \cite{BR2015}, so one might expect ten variables to suffice for us here. However, the corresponding restriction estimate is not known below this exponent \cite{HW2022}.
For this reason, we have not been able to handle the case $s = 10$ when $k = 3$.
\end{remark}

In Case \ref{case:neq}, we have the smooth rational solution $(1,\ldots,1)$, while in Case~\ref{case:eq} we can construct via the translation and dilation invariance of $\cV'$ a smooth adelic solution sufficiently close to the diagonal. One needs to be careful of obstructions similar to the following.

\begin{ex}
Suppose $(a_1,\ldots,a_s) = (b_1,\ldots,b_s)$ and $a_1, \ldots, a_{s-1} > 0$, so that 
\[
a_s = - (a_1 + \cdots + a_{s-1}) < 0.
\]
Writing $p_i  = -a_i/a_s$ $(1 \le i \le s-1)$, we obtain
\[
p_1 x_1^3 + \cdots + p_{s-1} x_{s-1}^3 = (p_1 x_1 + \cdots + p_{s-1} x_{s-1})^3.
\]
This is the equality case of Jensen's inequality, and can only occur if $\bx$ lies on the diagonal. Then $\bx$ is singular. We conclude that $\cV$ contains no non-singular real solutions. As $\cV(\bR)$ is the diagonal in this case, we cannot hope for a density result here anyway.
\end{ex}

A pleasant feature of Theorem \ref{MainThm} is so-called supersaturation; we establish a sharp lower bound for the number of dense solutions to the system. For $k \ge 3$, the result is new even qualitatively; the existence of a single non-trivial solution over $\cA$ was not hitherto known, except in certain cases where we are in Case~\ref{case:eq}  and there are sufficiently many variables to enable a solution similar to Keil and Henriot's \cite{Hen}. The situation is different when $k=2$ and we are in Case~\ref{case:eq}. In this case, the existence of non-trivial solutions was demonstrated by Keil \cite{Kei2014}, but the lower bound $N^{s-3}$ was not known. Our framework has the advantage of providing a direct path to supersaturation, without needing to inherit it from another system of equations.

\subsection{An arithmetic regularity lemma for polynomial phases}

Arithmetic regularity lemmas are analogous to Szemer\'edi's regularity lemma in graph theory. As well as the original article of Green \cite{Gre2005b}, good references include \cite{GT2010} and \cite{Tao2012}. Of course, this is now a completely natural approach to diophantine equations in dense variables. Our main objective in this paper is to show that it can serve as a general framework in sparser, higher-degree settings, much like how \cite{BP2017} brought the transference principle to this circle of problems. We do note, however, that it has been used in a few somewhat similar contexts \cite{GL2019, Lin2018}, and for one of the steps in some more similar contexts \cite{CC, CC2025}.

Our arithmetic regularity lemma is as follows. The $\Phi$-seminorm will be defined in \S \ref{WeakAndFull}, and trigonometric polynomials will be defined in \S \ref{struct}. For $M, N \ge 1$, the notion of $(M,N)$-irrationality will be defined in \S \ref{ARproof}.

\begin{lem}
\label{ARLgen}
Let $J \in \bN$, let $\eps, \del > 0$, and let $\cF: \N \to [1,\infty)$.
For $j \in [J]$, let $\Psi_j(x) \in \bZ[x]$ be a polynomial of degree $k_j$. Let $\cA \subseteq [N]$ with $\# \cA \ge \del N$. Then there exist
\begin{enumerate}[(a)]
\item positive integers $M \ll_{\eps, \del, \cF, \boldsymbol \Psi} 1$ and $d \le M/J$,
\item for $j \in [J]$, a $(\cF(M), N^{k_j})$-irrational frequency $\btet^{(j)} \in \bT^{c_j}$ with \mbox{$c_j \le d$,}
\item an arithmetic progression $\cP \subseteq [N]$ with $\# \cP \ge N/M$,
\item and decompositions
\[
1_{\cA} = f + f_\unf,
\qquad
f 1_{\cP} = f_\str + f_\sml,
\]
\end{enumerate}
such that
\begin{enumerate}[(i)]
\item $f: [N] \to [0,1]$,
\item $f_\unf: [N] \to [-1,1]$ with
$
\| f_\unf \|_{\Phi} \le N / \cF(M),
$
where $\Phi = \bT\textnormal{-span}(\boldsymbol \Psi)$,
\item $f_\sml: \cP \to [-2, 2]$ with $\| f_\sml \|_2 \le \eps \| 1_{\cP} \|_2$,
\item $f_{\str}: \cP \to [0,1]$ with $\| f_\str \|_1 \gg \del \| 1_{\cP} \|_1$,
\item as well as a trigonometric polynomial 
\[
F: \bT^{c_1} \times \cdots \times \bT^{c_J} \to [0,1]
\]
of degree at most $M$ such that
\[
f_\str(n) = F(\btet^{(1)} \Psi_1(n), \ldots, \btet^{(J)} \Psi_J(n))
\qquad (n \in \cP).
\]
\end{enumerate}
\end{lem}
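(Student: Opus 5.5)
The plan is the standard energy-increment argument, run over a sequence of scales $M_0 < M_1 < \cdots$ defined from $\cF$, followed by an irrationality (factorisation) step and then a Bohr-set/progression localisation.

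\emph{Step 1: weak regularity.} I would begin with the \emph{weak} decomposition. Consider the family of $\Phi$-phases $n \mapsto e(\alpha \Psi_j(n))$, $\alpha \in \bT$, $j \in [J]$. Given a finite set of frequencies for each $j$, form the $\sigma$-algebra $\mathcal B$ generated by the level sets of the corresponding polynomial phases, rounded to a grid of mesh roughly $1/M$. Let $f = \bE(1_\cA \mid \mathcal B)$, which is $[0,1]$-valued, and set $f_\unf = 1_\cA - f$. If $\|f_\unf\|_\Phi > N/\cF(M)$, then by definition of the $\Phi$-seminorm (a supremum of correlations with such phases) there is a new phase $e(\alpha\Psi_j(n))$ correlating with $f_\unf$. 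Adding $\alpha$ to the $j$-th list increases the energy $\|f\|_2^2/N$ by $\gg \cF(M)^{-2}$. Since the energy is at most $1$, the iteration stops after boundedly many steps, with the complexity $M$ growing through an iterated function of $\cF$; this gives $M \ll_{\eps,\del,\cF,\boldsymbol\Psi} 1$. Smoothing the $\sigma$-algebra (a Fejér-kernel approximation of the indicators of the cells) lets us write $f$ as $G(\balp^{(1)}\Psi_1(n),\ldots,\balp^{(J)}\Psi_J(n))$ up to an $L^2$-small error. Here $G$ is a trigonometric polynomial of degree $\le M$ with values in $[0,1]$: the Fejér kernel is positive, so this is preserved. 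The error is absorbed into $f_\sml$ at the end.

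\emph{Step 2: irrationality.} I would then make each frequency vector $\balp^{(j)}$ irrational. If $\balp^{(j)}$ is not $(\cF(M),N^{k_j})$-irrational, there is a small integer relation $\bq\cdot\balp^{(j)}$ lying close to a rational with small denominator on the scale $N^{-k_j}$. In that case $\balp^{(j)}\Psi_j(n)$ is, on arithmetic progressions of common difference $q$ and length $N/M'$, equal to a lower-dimensional frequency times $\Psi_j(n)$ plus a function that is almost constant. This uses that $\Psi_j$ has integer coefficients, so $\Psi_j(qm+r) \bmod q$ depends only on $r$, and that the slowly varying part changes by $o(1)$ over a short progression. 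Each such reduction lowers $c_j$, so after at most $\sum_j c_j \le M$ reductions, each carried out with $\cF$ replaced by a more rapidly growing function, we reach irrational $\btet^{(j)}$. The resulting cost is a restriction to a subprogression $\cP$ with $\#\cP \ge N/M$, after enlarging $M$. The requirement $d \le M/J$ is arranged simply by taking $M$ large compared with $J$ and the total dimension.

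\emph{Step 3: the structured part on $\cP$.} On $\cP$, the almost-constant pieces freeze into constants, so $f1_\cP = F(\btet^{(1)}\Psi_1(n),\ldots) + f_\sml$. Here $F$ is $G$ with the rational and frozen coordinates substituted; it remains a $[0,1]$-valued trigonometric polynomial of degree $\le M$. The error $f_\sml$ collects the Fejér approximation error and the freezing error, both $\le \eps\|1_\cP\|_2$ once the parameters are chosen suitably; it takes values in $[-2,2]$ because $f$ and $f_\str$ both lie in $[0,1]$. For (iv), we choose $\cP$ among the subprogressions partitioning $[N]$ so that $\sum_{n\in\cP} f(n) \ge \del\#\cP$, which is possible by averaging since $\sum f = \#\cA$. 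Then $\|f_\str\|_1 \ge \del\#\cP - \|f_\sml\|_1 \gg \del\#\cP$ provided $\eps \le \del/2$.

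\emph{Main obstacle.} I expect Step 2 to be the hardest part. It must interleave with Step 1: after restricting to progressions, the unfolding bound $\|f_\unf\|_\Phi \le N/\cF(M)$ has to hold with the \emph{final} $M$, so Steps 1 and 2 cannot simply be run in sequence. I would handle this by the usual nested iteration. Choose a rapidly growing sequence of scales adapted to $\cF$, and in the energy increment use $\cF$ composed with the maximal complexity growth that irrationalisation can cause. The pigeonhole over the scales then ensures that the final $M$ satisfies both the irrationality bound and the uniformity bound at once.
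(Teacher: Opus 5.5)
The gap is in Step 1 and in how it feeds Step 3. Your overall plan matches the paper: an energy increment, then irrationalising via small integer relations, then localising to a progression of common difference $q$ and smoothing. Your Step 2 is essentially Lemma~\ref{Ratner} plus the partition into short progressions. The problem is that you ask a single factor $\mathcal B$ to do two jobs at once. You need $\|1_\cA-\bE(1_\cA\mid\mathcal B)\|_\Phi\le N/\cF(M)$, and you also need $\bE(1_\cA\mid\mathcal B)$ itself to be, up to Fejér and freezing errors, a degree-$M$ function of at most $M$ frequencies. The energy increment cannot deliver both. Each failure of uniformity gains only $\gg\cF(M)^{-2}$ while adding a frequency. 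If $M$ tracks the complexity, these gains can sum to less than $1$ (take $\cF(x)=2^x$), so nothing forces termination. If instead $M$ is frozen, the final factor carries up to $\cF(M)^2$ frequencies, far from degree $M$. This is not just bookkeeping: think of $1_\cA$ with many moderately large Fourier coefficients at independent frequencies, which no single low-complexity factor can make uniform.

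The missing idea is two-level (strong) regularity, the paper's Lemma~\ref{SFR}. Iterate the weak lemma with thresholds $\cF_*(d_1+\cdots+d_{j-1})$, and stop once the energy gain between consecutive levels is at most $\eta^2$; this happens within $O(\eta^{-2})$ levels. The output is a coarse $\phi\in\Phi^d$ and a refinement $(\phi,\tilde\phi)$ with $\|1_\cA-(1_\cA)_{\phi,\tilde\phi}\|_\Phi\le N/\cF_*(d)$ and $\|(1_\cA)_{\phi,\tilde\phi}-(1_\cA)_\phi\|_2\le\eta\sqrt N$.

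One then takes $f$ to be the fine projection, whose complexity never matters, and irrationalises only the $d$ coarse frequencies. Your idea of composing $\cF$ with the irrationalisation blow-up is exactly the choice of $\cF_*$; with it, Steps 1 and 2 can simply be run in sequence. The fine-minus-coarse difference must go into $f_\sml$. It is the main term there, and it is missing from your list. Because it is small only in $L^2([N])$, you cannot choose $\cP$ by averaging the density alone. You need the simultaneous pigeonhole of Lemma~\ref{pigeonhole}, which gives density $\gg\del$ and $L^2$-mass $\ll\del^{-1}\eta^2|\cP|$ of that difference on the same progression.

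There is also a technical problem with the smoothing. A conditional expectation onto a fixed grid partition is a step function of $\phi(n)$. Fejér-smoothing the cell indicators is $L^2$-close to it only if few points $\phi(n)$ lie near cell walls. This fails, for instance, if some frequency is $1/2$ and the grid is aligned with $0$, since then every $\phi(n)$ lies on a wall. Similarly, capturing a new phase with a grid of mesh $1/M$ costs $O(1/M)$, which swamps a correlation of size $1/\cF(M)$. Both issues call for randomising the grid offset.

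The paper builds this in from the start. The projection $f_\phi=\int_{\bT^d}f_\phi^\balp\,\d\balp$ averages the two-cells-per-coordinate partition over all translates. As a result, the inverse step (Lemma~\ref{RTinv}) loses only a factor $4$. Lemma~\ref{Lip} then gives $f_\phi=F\circ\phi$ exactly, with $F$ $4$-Lipschitz. Lemma~\ref{trig} therefore yields a uniform, not merely $L^2$, trigonometric approximation, with no exceptional set to control.
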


We will only use the following special case.

\begin{lem}
\label{ARL}
Let $k \in \bN$, let $\eps, \del > 0$, and let $\cF: \N \to [1,\infty)$. Let $\cA \subseteq [N]$ with $\# \cA \ge \del N$. Then there exist
\begin{enumerate}[(a)]
\item positive integers $M \ll_{\eps, \del, \cF} 1$ and $d \le M/2$,
\item for $j \in 
\{ 1, k \}$, a $(\cF(M), N^j)$-irrational frequency $\btet^{(j)} \in \bT^{c_j}$ with $c_j \le d$,
\item an arithmetic progression $\cP \subseteq [N]$ with $\# \cP \ge N/M$,
\item and decompositions
\[
1_\cA = f + f_\unf,
\qquad
f1_\cP = f_\str + f_\sml,
\]
\end{enumerate}
such that
\begin{enumerate}[(i)]
\item $f: \bN \to [0,1]$,
\item $f_\unf: [N] \to [-1,1]$ with
\[
\sup_{\alp,\bet} \Big|
\sum_{n \le N} f_\unf(n) e(\alp n^k + \bet n)
\Big| \le \frac{N}{\cF(M)},
\]
\item $f_\sml: \cP \to [-2, 2]$ with $\| f_\sml \|_2 \le \eps \| 1_\cP \|_2$,
\item $f_{\str}: \cP \to [0,1]$ with $\| f_\str \|_1 \gg \del \| 1_\cP \|_1$,
\item as well as a trigonometric polynomial $F: \bT^{c_1 + c_k} \to [0,1]$ of degree at most $M$ such that
\[
f_\str(n) = F(\btet^{(1)} n, \btet^{(k)} n^k)
\qquad (n \in \cP).
\]
\end{enumerate}
\end{lem}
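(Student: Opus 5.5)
This lemma should follow by specialising Lemma \ref{ARLgen} to $J = 2$, with $\Psi_1(x) = x$ and $\Psi_2(x) = x^k$, so that $k_1 = 1$ and $k_2 = k$. We take $\btet^{(1)}$ to be the frequency attached to $\Psi_1$ and $\btet^{(k)}$ to be the one attached to $\Psi_2$. If $k = 1$ the two polynomials coincide. In that case we may either take $J = 1$ and set $c_k = 0$, or keep $J = 2$ with duplicated polynomials; either way the bounds $d \le M/J \le M$ still need care, so we treat $k \ge 2$ as the main case and handle $k=1$ by duplication. Items (a), (c), (iii) and (iv) then transfer verbatim. Item (b) transfers with $(\cF(M), N^{k_j})$-irrationality becoming $(\cF(M), N^j)$-irrationality for $j \in \{1,k\}$, and $d \le M/2$ comes from $d \le M/J$. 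For (v), the trigonometric polynomial $F$ on $\bT^{c_1} \times \bT^{c_2}$ is exactly a trigonometric polynomial on $\bT^{c_1 + c_k}$ of the same degree, and $f_\str(n) = F(\btet^{(1)} n, \btet^{(k)} n^k)$.

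Two small points need attention. In (i), Lemma \ref{ARLgen} gives $f : [N] \to [0,1]$, whereas here $f : \bN \to [0,1]$. We extend $f$ by zero outside $[N]$, which is harmless since $1_\cA$ is supported on $[N]$. The substantive point is (ii): we must show that the $\Phi$-seminorm bound controls the supremum over $\alp, \bet$ of $\sum_{n \le N} f_\unf(n) e(\alp n^k + \bet n)$. Here $\Phi = \bT\text{-span}(x, x^k)$, i.e.\ the set of phases $\alp x^k + \bet x$ with $\alp,\bet \in \bT$. I expect the definition in \S\ref{WeakAndFull} to be
\[
\|g\|_\Phi = \sup_{\phi \in \Phi} \Big| \sum_{n \le N} g(n) e(\phi(n)) \Big|,
\]
or something that dominates it. In that case (ii) is immediate from $\|f_\unf\|_\Phi \le N/\cF(M)$.

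The only possible obstacle is if the $\Phi$-seminorm is defined differently, for instance with weights or an $\ell^p$ normalisation. If so, we would insert a comparison inequality showing that the supremum of exponential sums is at most a constant multiple of $\|\cdot\|_\Phi$. That constant can be absorbed by applying Lemma \ref{ARLgen} with $\cF$ replaced by $C\cF$, at the cost only of the implied constant in $M \ll_{\eps,\del,\cF} 1$. Since $\boldsymbol\Psi$ depends only on $k$, the dependence of $M$ on $\boldsymbol\Psi$ becomes a dependence on $k$. This is implicitly allowed, as $k$ is fixed throughout the paper.
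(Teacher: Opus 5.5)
Your proposal is correct and matches the paper: the paper gives no separate proof, and states this lemma as the special case $J=2$, $\Psi_1(x)=x$, $\Psi_2(x)=x^k$ of Lemma \ref{ARLgen}. Your guess about the seminorm is also right. \S\ref{WeakAndFull} defines $\|g\|_\Phi=\sup_{\phi\in\Phi}|\sum_{n\in[N]}g(n)e(\phi(n))|$, and $\Phi=\bT\textnormal{-span}(x,x^k)$ is exactly the set of phases $n\mapsto\alp n^k+\bet n$, so (ii) is immediate and no comparison inequality is needed. Your handling of the degenerate case $k=1$ is a little loose, but it does not matter, since Theorem \ref{MainThm} only uses $k\ge 2$.
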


In the usual arithmetic regularity lemma \cite{GT2010}, the part $f_{\unf}$ has small Gowers uniformity norm $U^{t+1}[N]$, where $t \in \bN$ can be chosen. This is amply strong information if $t \ge k$, but comes at the expense of having less information about $f_{\str}$. The case $t=1$ is much simpler and provides strong information about $f_{\str}$, see \cite{Ebe, Tao2012}. However, it only prevents $f_\unf$ from correlating with linear phases, and so it is insufficient for our purposes. Much like the baby bear's porridge, our arithmetic regularity lemma is neither too hot nor too cold, but just right.

\subsection{Methods and organisation}

The proof of our arithmetic regularity lemma is modelled on 
\cite{Tao2012}. We incorporate an additional technical device, replacing regular Bohr sets with what we term `random translate Bohr neighbourhoods'. Although not essential to our application, we believe that this machinery simplifies a number of calculations, and we hope that it receives further use. The general idea is to use an energy increment argument, conditioning on the relevant atom of a Bohr partition. When working with a collection of polynomial phases, we form a closed set by taking their span, inducing a seminorm which furnishes a suitable notion of uniformity. This is carried out in \S \ref{S:ARL}.

To then solve our arithmetic combinatorics problem, we require two types of configuration control (generalised von Neumann lemmas). Standard arguments control the relevant counting operator using Fourier analysis, so that $f_\unf$ makes a negligible contribution. The number of variables required --- the value of $s_0$ --- comes from the resolution of the main conjecture of Vinogradov's mean value theorem, and subsequent results \cite{BDG2016, Pie2019, Woo2019}. This stage is completed in \S \ref{S:Config}.

To handle $f_\sml$, we require $\ell^2$ control, which is far more involved. The structured part $f_\str$ comes equipped with an arithmetic progression $\cP$. An application of Cauchy's inequality breaks the problem down into two parts, each of which needs to be solved with $\cP$-dependence tracked:
\begin{enumerate}[(i)]
\item A lower bound on the number of solutions over $\cP$,
\item and an upper bound on the number of solutions to a more complicated auxiliary system of equations in $2s - 1$ many variables.
\end{enumerate}
We complete each of these tasks using the circle method~\cite{Vau1997}. The analysis is quite involved, as the need for precise dependence on $\cP$ complicates the analysis of the singular series and singular integral for each task and in each case. This is sometimes referred to as \emph{quantitative weak approximation} \cite{BL2019, Lin}. These tasks are carried out in \S \ref{S:QWA} and \S \ref{S:Aux}, respectively.

To estimate the structured count, we need to exploit the quantitative irrationality of the frequencies $\btet^{(1)}$ and $\btet^{(k)}$. We do so in \S \ref{S:Equi} by using Fourier series to establish equidistribution statements, leading to an expression for our count in terms of the density of solutions on a certain torus.

Finally, in \S \ref{S:Proof}, we apply a solution to the compact problem to infer supersaturation for our original problem. The former is in general deep \cite{CSV2016}, but in this particular setting is not \cite{Tao2014}.

\subsection*{Notation}

We adopt the Vinogradov and Bachmann--Landau notations, as we now describe. If $f$ and $g$ are complex-valued functions, then we write $f \ll g$ or $f = O(g)$ if $|f| \le C|g|$ pointwise, for some constant $C$. We write $f \asymp g$ if $f \ll g$, and $f = o(g)$ if $f/g \to 0$ in some specified limit. We use a subscript to indicate that the implied constant $C$ is allowed to depend on certain variables.

For $x \in \bR$, we define $\| x \| = \min \{ |x - n|: n \in \bZ \}$. For $x \in \bT$, we write $\| x \|$ or $\| x \|_{\bT}$ for the distance from $x$ to $0$ in $\bT$.

The symbol $p$ is reserved for primes.

\subsection*{Acknowledgements}

Some ideas were developed many years ago in collaboration with Sofia Lindqvist. We also thank Kevin Hughes for a helpful conversation.

\subsection*{Funding}
SV was supported by EPSRC CDT Grant EP/S021590/1 for the London School of Geometry and Number Theory.

\subsection*{Rights}

For the purpose of open access, the authors have applied a Creative Commons Attribution (CC-BY) licence to any Author Accepted Manuscript version arising from this submission.

\section{Arithmetic regularity for 
polynomial phases}
\label{S:ARL}

In this section, we prove Lemma \ref{ARLgen}.

\subsection{Bohr random translates}

The following infrastructure extends the model case of linear Bohr sets discussed in \cite[Lecture 6]{Pre2020}. For $\phi: [N] \to \bT^d$ and $\balp \in \bT^d$, we define the inhomogeneous Bohr sets
\begin{equation}
\label{partition}
B_\phi^\balp(\beps) =  \{
n \in [N]: \phi(n) \in \balp + \beps/2 + [0,1/2)^d \}
\quad (\beps \in 
\{ 0,1 \}^d),
\end{equation}
partitioning $[N]$.
We now define a conditional expectation operator projecting $f: [N] \to \bC$ to a function $f_\phi^\balp$ that is constant on the atoms of the partition \eqref{partition}. For $n \in [N]$, we write $B_\phi^\balp(n)$ for the atom $B_\phi^\balp(\beps)$ containing $n$.
For $\phi: [N] \to \bT^d$, $\balp \in \bT^d$, and $f: [N] \to \bC$, we define $f_\phi^\balp(n)$ to be the average of $f$ over the atom of the partition \eqref{partition} containing $n$, i.e.
\begin{equation}
\label{ConditionalExpectation}
f_\phi^\balp(n) \coloneq \displaystyle \bE_{m \in B_\phi^\balp(n)} f(m).
\end{equation}

We define an inner product of $f,g: \bN \to \bC$ by
\[
\langle f, g \rangle = \sum_{n \in [N]} f(n) \overline{g(n)}.
\]
The functions $f$ and $g$ are \emph{orthogonal} if $\langle f, g \rangle = 0$. A series of calculations yield the following properties.

\begin{lem}
\label{properties}
Let $f,g: [N] \to \bC$, $\phi: [N] \to \bT^d$, and $\balp \in \bT^d$. Then:
\begin{enumerate}[(i)]
\item $(f+g)_\phi^\balp = f_\phi^\balp + g_\phi^\balp$.
\item $\langle f, g_\phi^\balp \rangle = \langle f_\phi^\balp, g \rangle$.
\item If, for each $\beps$, the restriction $f\vert_{B_\phi^\balp(\eps)}$ is constant, then $f_\phi^\balp = f$.
\item $f - f_\phi^\balp$ is orthogonal to any function that is constant on every $B_\phi^\balp(\beps)$.
\end{enumerate}
\end{lem}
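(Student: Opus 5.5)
The plan is to write the conditional expectation operator explicitly as a finite sum of rank-one projections and read off all four properties from that formula. The atoms $B_\phi^\balp(\beps)$, $\beps \in \{0,1\}^d$, are pairwise disjoint and their union is $[N]$. Call $\beps$ \emph{occupied} if its atom is non-empty; only occupied atoms arise as $B_\phi^\balp(n)$. For each occupied $\beps$ we get
\[
f_\phi^\balp = \sum_{\beps \text{ occupied}} \frac{\langle f, 1_{B_\phi^\balp(\beps)}\rangle}{\# B_\phi^\balp(\beps)}\, 1_{B_\phi^\balp(\beps)},
\]
since for $n \in B_\phi^\balp(\beps)$ only the term with that $\beps$ survives, and it equals the average of $f$ over the atom. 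This identity is the only input, and every property is a short consequence of it.

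Property (i) is immediate, because each coefficient $\langle f, 1_B\rangle/\#B$ is linear in $f$.

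For (ii), expand both sides with the formula. We have
\[
\langle f, g_\phi^\balp\rangle = \sum_{\beps} \frac{\overline{\langle g, 1_B\rangle}\,\langle f, 1_B\rangle}{\# B},
\]
where $B = B_\phi^\balp(\beps)$ and the indicator functions are real. This expression is visibly symmetric: swapping the roles of $f$ and $g$ gives exactly the expansion of $\langle f_\phi^\balp, g\rangle$. So the operator is self-adjoint.

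For (iii), suppose $f$ equals a constant $c_\beps$ on each atom $B_\phi^\balp(\beps)$. Then the average over the atom containing $n$ is $c_\beps = f(n)$, so $f_\phi^\balp = f$. (The statement writes $B_\phi^\balp(\eps)$; I read this as $\beps$.)

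For (iv), let $h$ be constant on every atom. By (iii), $h_\phi^\balp = h$. Combining this with (i) and (ii) gives
\[
\langle f - f_\phi^\balp, h\rangle = \langle f, h\rangle - \langle f, h_\phi^\balp\rangle = 0.
\]
Alternatively, one can check atom by atom that $\sum_{n \in B}(f(n) - \bE_B f) = 0$, which gives the same conclusion.

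There is no real obstacle here. The only points needing care are restricting to non-empty atoms, so that no division by zero occurs, and keeping track of complex conjugates in (ii).
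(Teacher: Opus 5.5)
Your proposal is correct and is essentially the approach the paper intends: the paper writes out no proof, saying only that ``a series of calculations'' yields these properties, and expressing $f_\phi^\balp$ through the averages over the non-empty atoms is the natural way to do those calculations. One small wording point in (ii): literally interchanging $f$ and $g$ in your expression gives its complex conjugate $\langle g, f_\phi^\balp\rangle$, so it is cleaner to expand $\langle f_\phi^\balp, g\rangle$ directly, which produces the identical sum $\sum_{\beps} \langle f, 1_B\rangle\overline{\langle g, 1_B\rangle}/\# B$.
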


For $\phi: [N] \to \bT^d$ and $f: [N] \to \bC$, we define the Bohr random translate projection $f_\phi: \bN \to \bC$ by
\begin{equation}
\label{BRTproj}
f_\phi(n) = \int_{\bT^d} f_\phi^\balp(n) \d \balp.
\end{equation}

\subsection{Weak regularity and full regularity}
\label{WeakAndFull}

Throughout this subsection and the next, we fix a non-empty but otherwise arbitrary set $\Phi$ of functions $[N] \to \bT$. For $f: [N] \to \bC$, we define a seminorm
\[
\| f \|_\Phi = \sup_{\phi \in \Phi} \Big|
\sum_{n \in [N]} f(n) e(\phi(n))
\Big|.
\]
We identify the set of functions $[N] \to \bT$ with $\bT^N$, thus imbuing it with a topology.

\begin{lem}
[Random translate inverse theorem]
\label{RTinv}
Let $\Phi$ be a closed subset of $\bT^N$, let $\cI \subseteq \bT$ be an interval of length $\lam(\cI) \le 1/2$, and let $f: \bN \to \bC$. Then there exists $\phi \in \Phi$ such that
\[
\int_\bT \Big|
\sum_{n \in [N]} f(n) 1_{\alp - \cI}(\phi(n)) \Big| \d \alp \ge \lam(\cI) \| f \|_\Phi / 2.
\]
\end{lem}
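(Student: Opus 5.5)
The plan is to use the maximising phase $\phi$ directly, and to express the character $e(\phi(n))$ as an average of translated indicator functions $1_{\alp - \cI}(\phi(n))$. Averaging over translates is exactly what the integral over $\alp$ in the statement does.

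\emph{Step 1: the supremum is attained.} The map $\phi \mapsto \big|\sum_{n \in [N]} f(n) e(\phi(n))\big|$ is continuous on $\bT^N$, since it is a finite sum of continuous functions of the coordinates. As $\Phi$ is a closed subset of the compact space $\bT^N$, it is compact. Hence there exists $\phi \in \Phi$ with
\[
\Big| \sum_{n \in [N]} f(n) e(\phi(n)) \Big| = \| f \|_\Phi .
\]
Only the values of $f$ on $[N]$ play any role.

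\emph{Step 2: an averaging identity.} Fix $x \in \bT$. We have $x \in \alp - \cI$ if and only if $\alp \in x + \cI$. Therefore
\[
\int_\bT e(\alp) 1_{\alp - \cI}(x) \d \alp = \int_{x + \cI} e(\alp) \d \alp = e(x) \, c_\cI,
\qquad
c_\cI \defeq \int_\cI e(t) \d t .
\]
Apply this with $x = \phi(n)$, multiply by $f(n)$, and sum over $n \in [N]$. Interchanging the finite sum and the integral gives
\[
c_\cI \sum_{n \in [N]} f(n) e(\phi(n)) = \int_\bT e(\alp) \sum_{n \in [N]} f(n) 1_{\alp - \cI}(\phi(n)) \d \alp .
\]
Take absolute values and use $|e(\alp)| = 1$ together with the triangle inequality for integrals. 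This yields
\[
|c_\cI| \, \| f \|_\Phi \le \int_\bT \Big| \sum_{n \in [N]} f(n) 1_{\alp - \cI}(\phi(n)) \Big| \d \alp .
\]

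\emph{Step 3: a lower bound for $|c_\cI|$.} Write $\lam = \lam(\cI)$. If $\cI$ has endpoints $a$ and $a + \lam$, a direct computation gives
\[
|c_\cI| = \Big| \frac{e(a+\lam) - e(a)}{2\pi i} \Big| = \frac{\sin(\pi \lam)}{\pi} .
\]
The function $\sin(\pi t)$ is concave on $[0, 1/2]$, so it lies above its chord there, giving $\sin(\pi \lam) \ge 2\lam$ for $0 \le \lam \le 1/2$. Hence
\[
|c_\cI| \ge \frac{2\lam}{\pi} \ge \frac{\lam}{2} .
\]
Combining this with Step 2 gives the claimed inequality.

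The only delicate points are the existence of a maximiser (handled by compactness in Step 1) and the elementary lower bound on $|c_\cI|$ (Step 3). The hypothesis $\lam(\cI) \le 1/2$ is used precisely in Step 3, to make the concavity bound for $\sin(\pi t)$ available.
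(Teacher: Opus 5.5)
Your proof is correct and follows the paper's argument. You attain the supremum by compactness of $\Phi$ (which needs $\Phi$ non-empty, as fixed at the start of \S\ref{WeakAndFull}), write $e(\phi(n))$ as an $e(\alp)$-weighted average of translated interval indicators, and use $\sin(\pi\lam)\ge 2\lam$ to get $|c_\cI|\ge 2\lam/\pi\ge \lam/2$. Your identity $1_{\alp-\cI}(x)=1_{x+\cI}(\alp)$ handles the sign more carefully than the paper, which computes with $1_{\phi(n)-\cI}(\alp)=1_{\alp+\cI}(\phi(n))$ and then writes $1_{\alp-\cI}(\phi(n))$; that slip is harmless, since $-\cI$ is again an interval of the same length.
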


\begin{proof} 
The map
\[
\bT^N \to \bR, \qquad
\phi \mapsto \Big| \sum_{n \in [N]} f(n) e(\phi(n)) \Big|
\]
is continuous. Hence, by compactness of $\Phi$, there exists $\phi \in \Phi$ such that
\[
\| f \|_\Phi = \Big| \sum_{n \in [N]} f(n) e(\phi(n)) \Big|.
\]
Let us choose representatives $a, b \in \bR$ with $b - a = \lam(\cI)$ such that, viewed as subsets of $\bT$,
\[
(a,b) \subseteq \cI \subseteq [a,b].
\]
For $x \in \bT$,
\[
\int_\bT 1_{x - \cI} (\alp) e(\alp) \d \alp = \frac{e(x-a) - e(x-b)}{2 \pi i} = \frac{e(-a) - e(-b)}{2 \pi i} e(x) .
\]
Consequently,
\begin{align*}
\Big| \sum_{n \in [N]} f(n) e(\phi(n)) \Big| 
&= \frac{2 \pi}{|e(b-a) - 1|}  \Big| \sum_{n \in [N]} f(n) \int_\bT 1_{\phi(n)-\cI}(\alp) e(\alp) \d \alp
\Big|
\\ &\le \frac{2 \pi}{|e(b-a) - 1|} \int_\bT \Big| \sum_{n \in [N]} f(n) 1_{\alp - \cI}(\phi(n)) \Big| \d \alp.
\end{align*}
The conclusion now follows from the fact that
\[
|e(b-a)-1| = 2|\sin(\pi(b-a))| \ge 4 \| b - a \|.
\]
\end{proof}

The following uses an argument of Green and Tao \cite{GT2010} adapted to our context.

\begin{lem}
[Weak regularity]
\label{WR}
Let $\Phi$ be a closed subset of $\bT^N$ containing zero, and let $\eps > 0$. Let $f: [N] \to \bC$ be $1$-bounded, meaning that $|f| \le 1$ pointwise. Then there exists $d \le (4/\eps)^2$ and $\phi \in \Phi^d$ such that
\begin{equation}
\label{WRerror}
\| f - f_\phi \|_\Phi \le \eps N.
\end{equation}
\end{lem}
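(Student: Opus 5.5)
We run a standard energy-increment iteration in the style of Green and Tao, using Lemma~\ref{RTinv} as the inverse step. Set $\phi_0$ to be the empty tuple (or the zero function, which lies in $\Phi$), so that $f_{\phi_0}$ is the constant function $\bE_{n\in[N]} f(n)$. Given $\phi = (\phi_1,\ldots,\phi_d) \in \Phi^d$, if \eqref{WRerror} holds we stop. Otherwise $\|f - f_\phi\|_\Phi > \eps N$, and we look for $\phi_{d+1} \in \Phi$ such that the energy
\[
E(\phi) \coloneq \int_{\bT^d} \| f_\phi^\balp \|_2^2 \,\d\balp
\]
increases by at least $c\eps^2 N$ when $\phi$ is replaced by $\phi' = (\phi,\phi_{d+1})$. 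Since $|f|\le 1$, we have $E \le N$ throughout, so the process stops after at most $(4/\eps)^2$ steps, provided the increment is at least $\eps^2 N/16$.

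\textbf{Increment step.} Apply Lemma~\ref{RTinv} to $g = f - f_\phi$ with $\cI = [0,1/2)$, so $\lam(\cI)=1/2$. This gives $\psi\in\Phi$ with
\[
\int_\bT \Big|\sum_n g(n)1_{\gam-\cI}(\psi(n))\Big|\,\d\gam \ge \eps N/4.
\]
Set $\phi' = (\phi,\psi)$. For $\balp'=(\balp,\alp_{d+1})$, the atoms of the partition for $(\phi',\balp')$ refine those for $(\phi,\balp)$, and the sets $\{n : \psi(n) \in \alp_{d+1} + \eps_{d+1}/2 + [0,1/2)\}$ are unions of the new atoms. Here we relate $\gam - \cI$ to $\alp_{d+1} + [0,1/2)$ or its complement by reparametrising $\gam$. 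By Lemma~\ref{properties}, $f^{\balp'}_{\phi'} - f^{\balp}_{\phi}$ is orthogonal to $f^\balp_\phi$, so Pythagoras gives
\[
\|f^{\balp'}_{\phi'}\|_2^2 - \|f^\balp_\phi\|_2^2 = \|f^{\balp'}_{\phi'} - f^\balp_\phi\|_2^2 .
\]
Moreover $\langle f - f^\balp_\phi, 1_S\rangle = \langle f^{\balp'}_{\phi'} - f^\balp_\phi, 1_S\rangle$ whenever $S$ is a union of new atoms. By Cauchy--Schwarz this is at most $\sqrt N\,\|f^{\balp'}_{\phi'}-f^\balp_\phi\|_2$. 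Integrating over $\balp'$ and applying Cauchy--Schwarz (Jensen) in $\balp'$ then yields the energy increment $E(\phi')-E(\phi) \ge (\text{average correlation})^2/N$.

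\textbf{Main obstacle.} Lemma~\ref{RTinv} was applied to $f - f_\phi$, where $f_\phi$ is the \emph{averaged} projection, whereas the Pythagoras argument works with $f - f^\balp_\phi$ for each fixed $\balp$. By linearity,
\[
\sum_n (f-f_\phi)(n)1_{\gam-\cI}(\psi(n)) = \int_{\bT^d}\sum_n (f - f^\balp_\phi)(n)1_{\gam-\cI}(\psi(n))\,\d\balp .
\]
Applying the triangle inequality in $\balp$ therefore gives
\[
\int_\bT\int_{\bT^d}\Big|\langle f-f^\balp_\phi, 1_{\gam-\cI}\circ\psi\rangle\Big|\,\d\balp\,\d\gam \ge \eps N/4 .
\]
It follows that the increment $\int E$-difference is at least $(\eps N/4)^2/N = \eps^2N/16$. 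The increment must be bounded below by this squared quantity and not by anything weaker; this is what fixes the constant $(4/\eps)^2$. Care is also needed to check that $\gam$ ranging over $\bT$ with $\cI = [0,1/2)$ matches the atom structure $\alp_{d+1}+\eps_{d+1}/2+[0,1/2)$, and that the endpoints of the intervals are handled correctly.
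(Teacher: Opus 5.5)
Your proposal is correct and follows essentially the same energy-increment argument as the paper: you apply Lemma~\ref{RTinv} with an interval of length $1/2$, use linearity and the triangle inequality to pass from $f_\phi$ to $f_\phi^\balp$, and use Lemma~\ref{properties} with Cauchy--Schwarz to bound the correlation by $N^{1/2}\|f_{\phi,\psi}^{\balp,\alp'} - f_\phi^\balp\|_2$, after which Pythagoras gives an increment of $(\eps/4)^2N$ per step and hence $d \le (4/\eps)^2$. The endpoint issue you flag is harmless: $\psi$ takes only $N$ values, so for all but finitely many $\gam$ you may replace $\gam - \cI$ by $\gam - 1/2 + [0,1/2)$, or you can simply take $\cI = (-1/2,0]$ so that $\gam - \cI = \gam + [0,1/2)$ exactly.
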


\begin{proof} We perform an iterative procedure. Stage $d \ge 0$ begins with $\phi \in \Phi^d$ such that
\begin{equation}
\label{EnergyIncrement}
\int_{\bT^d} \| f_\phi^\balp \|_2^2 \d \balp \ge d(\eps/4)^2 N.
\end{equation}
We initialise at stage $d = 0$ with $f_0 = \bE_{[N]} f$. The process terminates if $\| f - f_\phi \|_\Phi \le \eps N$.

Otherwise $\| f - f_\phi \|_\Phi > \eps N$ and, by Lemma \ref{RTinv}, there exists $\phi' \in \Phi$ such that
\[
\int_\bT |
\langle f - f_\phi, g \rangle
| \d \alp'
> \eps N/4,
\]
where $g = 1_{\alp' + [0,1/2)} \circ \phi'$.
Thus, by the definition \eqref{BRTproj} of $f_\phi$, linearity, and the triangle inequality,
\[
\int_\bT \int_{\bT^d} | \langle f-f_\phi^\balp , g \rangle | \d \balp \d \alp' > \eps N / 4.
\]

Since $g$ and $f_\phi^\balp$ are constant on sets of the form $B_{\phi, \phi'}^{\balp, \alp'} (\beps, \eps')$, Lemma \ref{properties} and Cauchy--Schwarz give
\[
g_{\phi,\phi'}^{\balp, \alp'} = g, \qquad
(f_\phi^\balp)_{\phi, \phi'}^{\balp, \alp'} = f_\phi^\balp,
\]
and
\begin{align*}
|\langle f - f_\phi^\balp, g \rangle| &=
|\langle f - f_\phi^\balp, g_{\phi,\phi'}^{\balp, \alp'} \rangle|
= |\langle f_{\phi,\phi'}^{\balp,\alp'} - f_\phi^\balp, g \rangle|
\le N^{1/2} \| f_{\phi,\phi'}^{\balp,\alp'} -f_\phi^\balp \|_2.
\end{align*}
Consequently,
\[
\int_\bT \int_{\bT^d} \| f_{\phi,\phi'}^{\balp,\alp'} -f_\phi^\balp \|_2 \d \balp \d \alp' > \eps N^{1/2}/4.
\]
A further application of the Cauchy--Schwarz inequality yields
\[
\int_\bT \int_{\bT^d} \| f_{\phi,\phi'}^{\balp,\alp'} -f_\phi^\balp \|_2^2 \d \balp \d \alp' > (\eps/4)^2 N.
\]

By Lemma \ref{properties},
\[
\langle f_{\phi, \phi'}^{\balp, \alp'} - f_\phi^\balp, f_\phi^\balp \rangle =
\langle f_{\phi, \phi'}^{\balp, \alp'} - (f_\phi^\balp)_{\phi,\phi'}^{\balp,\alp'}, f_\phi^\balp \rangle = \langle f - f_\phi^\balp, f_\phi^\balp \rangle = 0.
\]
Hence, by Pythagoras's theorem,
\[
\| f_{\phi, \phi'}^{\balp, \alp'} \|_2^2 = \| f_{\phi,\phi'}^{\balp,\alp'} -f_\phi^\balp \|_2^2 + \| f_\phi^\balp \|_2^2.
\]
Now
\[
\int_\bT \int_{\bT^d} \| f_{\phi,\phi'}^{\balp,\alp'} \|_2^2 \d \balp \d \alp' > (\eps/4)^2 N + \int_{\bT^d} \| f_\phi^\balp \|_2^2 \d \balp,
\]
so we have \eqref{EnergyIncrement} with $f_{\phi, \phi'}^{\balp, \alp'}$ in place of $f_\phi^\balp$ and $d + 1$ in place of $d$, and we proceed to stage $d+1$.

The left hand side of \eqref{EnergyIncrement} is bounded above by $N$, so our procedure must terminate at some stage $d \le (4/\eps)^2$ with a projection satisfying \eqref{WRerror}.
\end{proof}

The argument underpinning Lemma \ref{WR} tolerates initialising with a projection more refined than the function $f_0 = \bE_{[N]}f$. This is useful in proving stronger regularity lemmas, as we will soon see.

\begin{lem}
[Refined weak regularity]
\label{RWR}
Let $\Phi$ be a closed subset of $\bT^N$, and let $\phi \in \Phi^d$. Let $\eps > 0$, and let $f: \bN \to \bC$ be a $1$-bounded function. Then there exist $d' \le (4/\eps)^2$ and $\phi' \in \Phi^{d'}$ such that
\begin{equation}
\label{RWRbound}
\| f- f_{\phi, \phi'} \|_\Phi \le \eps N.
\end{equation}
\end{lem}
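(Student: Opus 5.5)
We rerun the energy-increment iteration from the proof of Lemma \ref{WR}, but start it at the given $\phi \in \Phi^d$ rather than at the trivial projection. At stage $j \ge 0$ we will hold $\phi' = (\phi'_1,\ldots,\phi'_j) \in \Phi^j$ satisfying the relative energy bound
\[
\int_{\bT^{d+j}} \| f_{\phi,\phi'}^{\balp,\balp'} \|_2^2 \d \balp \d \balp' \ge \int_{\bT^d} \| f_\phi^\balp \|_2^2 \d \balp + j(\eps/4)^2 N .
\]
At $j=0$ this holds with equality, so the initialisation is free. The procedure terminates as soon as $\| f - f_{\phi,\phi'} \|_\Phi \le \eps N$.

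If termination has not occurred, we apply Lemma \ref{RTinv} to $f - f_{\phi,\phi'}$ with $\cI = -[0,1/2)$. This produces $\phi'' \in \Phi$ with
\[
\int_\bT |\langle f - f_{\phi,\phi'}, g\rangle| \d\alp'' > \eps N/4, \qquad g = 1_{\alp''+[0,1/2)}\circ\phi'' .
\]
Closedness of $\Phi$ is exactly what Lemma \ref{RTinv} needs, so that hypothesis is used here. Next we unfold $f_{\phi,\phi'}$ via \eqref{BRTproj} and use the triangle inequality, which brings the integral over $(\balp,\balp')$ outside. We then use Lemma \ref{properties}, since $g$ and $f_{\phi,\phi'}^{\balp,\balp'}$ are constant on atoms of the refined partition by $(\phi,\phi',\phi'')$. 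Finally we apply Cauchy--Schwarz twice. Together these give
\[
\iint \| f_{\phi,\phi',\phi''}^{\balp,\balp',\alp''} - f_{\phi,\phi'}^{\balp,\balp'}\|_2^2 > (\eps/4)^2 N .
\]
Pythagoras, justified by the orthogonality in Lemma \ref{properties}(iv), converts this into an energy increment of $(\eps/4)^2N$. This establishes the relative bound at stage $j+1$ with $\phi'$ replaced by $(\phi',\phi'')$.

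It remains to see why the iteration stops by stage $(4/\eps)^2$. Since $f$ is $1$-bounded, each $f^{\balp,\balp'}_{\phi,\phi'}$ is $1$-bounded, so the left side of the relative bound is at most $N$. The initial energy term is nonnegative. Hence $j(\eps/4)^2 N \le N$, and the procedure halts at some $d' = j \le (4/\eps)^2$ with \eqref{RWRbound}.

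The main point to check is that the bound on $d'$ does not depend on $d$. This works because we measure energy \emph{relative} to the starting energy rather than from zero, and the initial term can simply be dropped. A minor point is that, unlike Lemma \ref{WR}, we do not require $0 \in \Phi$; that hypothesis was only used to make the trivial initialisation available, and here the initial data $\phi$ is supplied instead. With $d=0$ this recovers Lemma \ref{WR}.
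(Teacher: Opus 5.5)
Your proposal is correct and follows the paper's own argument. The paper's proof reruns the energy-increment iteration of Lemma \ref{WR}, initialised at $f_\phi$ instead of $f_0$, and observes that the quantity \eqref{incrementor} grows by $(\eps/4)^2 N$ at each non-terminating step while staying bounded by $N$; your write-up fills in the details it leaves implicit, notably that energy is measured relative to the nonnegative starting energy, so the bound on $d'$ does not depend on $d$.
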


\begin{proof} We carry out the energy increment process used to prove Lemma \ref{WR}, initialising with $f_\phi$ instead of $f_0$. At each stage, we query whether or not \eqref{RWRbound} holds. If not, then we increase the quantity
\begin{equation}
\label{incrementor}
\int_{\bT^{d+d'}} \| f_{\phi, \phi'}^{\balp, \balp'} \|_2^2 \d \balp \d \balp'
\end{equation}
by $(\eps/4)^2 N$. Since \eqref{incrementor} is bounded above by $N$, this procedure must terminate within $d' \le (4/\eps)^2$ many steps.
\end{proof}

This refinement of the weak regularity lemma is amenable to iteration, leading to a `full' regularity lemma.

\begin{lem}
[Full regularity]
\label{SFR}
Let $\cF: [1,\infty) \to [1,\infty)$ and $\eps > 0$. Let $\Phi$ be a closed subset of $\bT^{N}$ containing zero, and let $f: [N] \to \bC$ be $1$-bounded. Then there exist
\[
1 \le d, d' \ll_{\cF,\eps} 1, \qquad \phi \in \Phi^d, \qquad
\phi' \in \Phi^{d'}
\]
such that
\[
\| f - f_{\phi, \phi'} \|_{\Phi} \le N/\cF(d), \qquad
\| f_{\phi, \phi'} - f_{\phi} \|_2 \le \eps \sqrt{N}.
\]
\end{lem}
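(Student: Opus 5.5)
We iterate Lemma \ref{RWR} with a rapidly decreasing sequence of accuracy parameters, and stop via an energy-increment (pigeonhole) argument. Set $d_0 = 1$ and let $\phi_0 \in \Phi$ be the zero function; this is allowed because $0 \in \Phi$. Since the zero function induces a trivial partition in each translate (up to the choice of $\alpha$), we have $f_{\phi_0} = \bE_{[N]} f$, so nothing is lost by starting here. Given $\phi_i \in \Phi^{d_i}$, we apply Lemma \ref{RWR} to $\phi_i$ with accuracy $\eps_i \coloneq 1/\cF(d_i)$, where we may harmlessly replace $\cF$ by a non-decreasing majorant. This produces $\phi_i' \in \Phi^{d_i'}$ with $d_i' \le (4\cF(d_i))^2$ and
\[
\| f - f_{\phi_i,\phi_i'} \|_\Phi \le N/\cF(d_i).
\]
We then put $\phi_{i+1} = (\phi_i,\phi_i')$ and $d_{i+1} = d_i + d_i'$. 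Thus every $d_i$ is bounded by a quantity depending only on $i$ and $\cF$.

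At each stage the pair $(\phi,\phi') = (\phi_i,\phi_i')$ already satisfies the first required inequality, so we only need some $i$ for which the second holds. For this we use the energy
\[
E_i \coloneq \int_{\bT^{d_i}} \| f_{\phi_i}^{\balp} \|_2^2 \, \d \balp \in [0,N].
\]
The key identity is the Pythagoras relation
\[
\int \| f_{\phi_{i+1}}^{\balp,\balp'} \|_2^2
= \int \| f_{\phi_{i+1}}^{\balp,\balp'} - f_{\phi_i}^{\balp} \|_2^2 + E_i .
\]
This holds pointwise in $(\balp,\balp')$ because the partition for $(\phi_i,\phi_i')$ at $(\balp,\balp')$ refines the partition for $\phi_i$ at $\balp$, and then Lemma \ref{properties}(iv) applies, exactly as in the proof of Lemma \ref{WR}.

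Next we pass from the random-translate projections to $f_{\phi}$ itself. Since $f_{\phi_{i+1}} - f_{\phi_i} = \int ( f_{\phi_{i+1}}^{\balp,\balp'} - f_{\phi_i}^{\balp} ) \, \d\balp\, \d\balp'$, Minkowski's integral inequality followed by Cauchy--Schwarz gives
\[
\| f_{\phi_{i+1}} - f_{\phi_i} \|_2^2 \le \int \| f_{\phi_{i+1}}^{\balp,\balp'} - f_{\phi_i}^{\balp} \|_2^2 = E_{i+1} - E_i .
\]
This step is the main point to check: the random-translate averaging has to be compatible with the energy increment. The inequality shows that it is, but only in this direction, so we must bound the $\ell^2$ difference of the averaged projections by the averaged increment, never the reverse.

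Because $E_i$ is non-decreasing and bounded by $N$, the increments cannot all be large. Among the indices $0 \le i < \lceil \eps^{-2} \rceil + 1$ there is therefore some $i$ with $E_{i+1} - E_i \le \eps^2 N$. For that $i$ we take $\phi = \phi_i$ and $\phi' = \phi_i'$, which gives both required bounds. The resulting $d = d_i$ and $d'$ are bounded in terms of $\cF$ and $\eps$ alone, since the number of iterations is $O(\eps^{-2})$ and each step's dimension depends only on $\cF$ and the previous dimension. We have $d \ge 1$ because $d_0 = 1$. If the statement requires $d' \ge 1$, we can append the zero phase to $\phi'$ at no cost, since it does not change the projection. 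The only remaining subtlety is the monotonicity of $\cF$, which we arranged at the start by passing to the non-decreasing majorant.
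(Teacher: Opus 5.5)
Your proposal is correct and follows the paper's proof. Both iterate Lemma \ref{RWR} with accuracy $1/\cF(d)$ at the current dimension $d$, use Pythagoras to see that the averaged energy is non-decreasing and bounded by $N$, choose a stage whose increment is at most $\eps^2 N$, and pass to $\| f_{\phi,\phi'} - f_\phi \|_2$ by Minkowski/Cauchy--Schwarz. The only difference is cosmetic: you start from the zero phase instead of a first application of Lemma \ref{WR}, which is equivalent since $f_{\phi_0} = \bE_{[N]} f$, and it neatly gives $d \ge 1$ while avoiding the paper's evaluation of $\cF(0)$ outside the domain of $\cF$.
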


\begin{proof}
We apply Lemma \ref{WR}, furnishing $d_1 \le 16 \cF(0)^2$ and $\phi \in \Phi^{d_1}$ such that
\[
\| f - f_{\phi} \|_{\Phi} \le N/\cF(0).
\]
With $d_2 = 16 \cF(d_1)^2$, Lemma \ref{RWR} gives $\phi' \in \Phi^{d_2}$ such that
\[
\| f - f_{\phi, \phi'} \|_{\Phi} \le N / \cF(d_1).
\]
Iterating this procedure, at stage $j$ we obtain 
\[
d_j = 16 \cF(d_1 + \cdots + d_{j-1})^2, \qquad
\phi \in \Phi^{d_1 + \cdots + d_{j-1}}, \qquad \phi' \in \Phi^{d_j}
\]
such that
\[
\| f - f_{\phi, \phi'} \|_{\Phi} \le N/\cF(d_1 + \cdots + d_{j-1}).
\]

By Lemma \ref{properties} and Pythagoras's theorem, the quantity
\[
\int_{\bT^{d_1 + \cdots + d_j}} \| f_{\phi, \phi'}^{\balp, \balp'} \|_2^2 \d \balp \d \balp' 
= \int_{\bT^{d_1 + \cdots + d_j}} 
(\| f_{\phi}^{\balp} \|_2^2 +  \| f_{\phi, \phi'}^{\balp, \balp'} - f_{\phi}^{\balp} \|_2^2)
\d \balp \d \balp'
\]
is non-decreasing at each stage of the iteration. As $N$ is an upper bound for this quantity, it follows that at some stage $j \ll_{\eps} 1$,
\[
\int_{\bT^{d_1 + \cdots + d_j}} \| f_{\phi, \phi'}^{\balp, \balp'} - f_{\phi}^{\balp} \|_2^2
\d \balp \d \balp' \le \eps^2 N.
\]
The result now follows from the Cauchy--Schwarz inequality.
\end{proof}

\subsection{Structure of the projection}
\label{struct}

We write $\Del$ for the symmetric difference operator on pairs of sets.

\begin{lem} 
\label{SymDif}
Let $0 < \eps \le 1/2$ and $\bgam \in \bT^d$. Then
\[
\meas_{\bT^d}
([0,\eps)^d \Del ([0,\eps)^d + \bgam)) \le 2 \eps^{d-1} \sum_{j \in [d]} \| \gam_j \|_\bT.
\]
\end{lem}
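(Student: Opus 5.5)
We prove the bound by telescoping across coordinates, changing one coordinate of the translate at a time. Write $Q = [0,\eps)^d$, and for $0 \le j \le d$ let $\bgam^{(j)} = (\gam_1, \ldots, \gam_j, 0, \ldots, 0)$, so that $\bgam^{(0)} = \bzero$ and $\bgam^{(d)} = \bgam$. The symmetric difference satisfies the triangle inequality $\meas(A \Del C) \le \meas(A \Del B) + \meas(B \Del C)$, because $A \Del C \subseteq (A \Del B) \cup (B \Del C)$. Applying this along the chain $Q + \bgam^{(0)}, Q + \bgam^{(1)}, \ldots, Q + \bgam^{(d)}$ gives
\[
\meas_{\bT^d}(Q \Del (Q + \bgam)) \le \sum_{j \in [d]} \meas_{\bT^d}\big((Q + \bgam^{(j-1)}) \Del (Q + \bgam^{(j)})\big).
\]
Lebesgue measure on $\bT^d$ is translation invariant, so we may translate the $j$th term by $-\bgam^{(j-1)}$. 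It becomes $\meas(Q \Del (Q + \gam_j \mathbf e_j))$, where $\mathbf e_j$ denotes the $j$th standard basis vector.

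Next we compute this one-coordinate term. The sets $Q$ and $Q + \gam_j \mathbf e_j$ are both products of the same factor $[0,\eps)$ in every coordinate $i \ne j$. In coordinate $j$, the factors are $[0,\eps)$ and $[0,\eps) + \gam_j$ respectively. The symmetric difference of two such products is the product of $[0,\eps)^{d-1}$ with the symmetric difference of the $j$th factors. By Fubini, its measure is therefore
\[
\eps^{d-1} \meas_\bT\big([0,\eps) \Del ([0,\eps) + \gam_j)\big).
\]

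It remains to prove the one-dimensional estimate $\meas_\bT([0,\eps) \Del ([0,\eps)+\gam)) \le 2\|\gam\|_\bT$. Choose the representative $t \in (-1/2, 1/2]$ of $\gam$, so that $|t| = \|\gam\|_\bT$, and take $t \ge 0$ by symmetry.
\begin{itemize}
\item[(a)] If $t \le \eps$, the arc $[0,\eps)$ and the arc $[t, \eps + t)$ overlap in $[t, \eps)$. Since $\eps + t \le 1$, no wraparound occurs. The symmetric difference is then $[0,t) \cup [\eps, \eps+t)$, which has measure $2t$.
\item[(b)] If $t > \eps$, the symmetric difference has measure at most $2\eps < 2t$.
\end{itemize}
The only point needing care is the wraparound on $\bT$, which is why we pass to the minimal representative and use $\eps \le 1/2$. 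Alternatively, the trivial bound $\meas(A \Del B) \le \meas A + \meas B = 2\eps$ covers case (b) directly.

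Summing the one-coordinate bounds over $j \in [d]$ gives the claimed inequality $\meas_{\bT^d}(Q \Del (Q + \bgam)) \le 2\eps^{d-1} \sum_{j \in [d]} \|\gam_j\|_\bT$.
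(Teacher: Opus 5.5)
Your proof is correct, and it takes a different route from the paper's. The paper represents each $\gam_j$ by a real number with $|\gam_j| \le 1/2$ and uses $\eps \le 1/2$ to identify $\meas_{\bT^d}$ with Lebesgue measure on $\bR^d$ for the whole configuration. It then computes the intersection $[0,\eps)^d \cap ([0,\eps)^d + \bgam)$ exactly as a box with side lengths $(\eps - |\gam_j|)_+$. Next it proves $\prod_j (\eps - |\gam_j|)_+ \ge \eps^d - \eps^{d-1}\sum_j |\gam_j|$ by induction on $d$. It finishes with inclusion--exclusion, $\meas(A \Del B) = 2\eps^d - 2\meas(A \cap B)$.

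You avoid both the product inequality and the inclusion--exclusion step. You use the triangle inequality for $\Del$ to telescope over coordinates, and translation invariance together with Fubini reduce everything to a one-dimensional shift. This buys two things. Wraparound only has to be handled on $\bT$ itself, where your case split on the minimal representative settles it. You also get the slightly sharper per-coordinate bound $2\eps^{d-1}\min\{\eps, \|\gam_j\|_\bT\}$ for free. The paper's route, by contrast, gives the exact value $2\eps^d - 2\prod_j(\eps - |\gam_j|)_+$ before it relaxes to the stated bound.
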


\begin{proof} Let us represent each $\gam_j \in \bT$ by a real number $\gam_j$ with $|\gam_j| \le 1/2$. As $0 < \eps \le 1/2$,
\[
\meas_{\bT^d}
([0,\eps)^d \Del ([0,\eps)^d + \bgam))
=
\meas_{\bR^d}
([0,\eps)^d \Del ([0,\eps)^d + \bgam)).
\]

Next, we bound $\meas_{\bR^d}
([0,\eps)^d \cap ([0,\eps)^d + \bgam))$ from below. This intersection is a box whose $j^{\mathrm{th}}$ side has length $(\eps - |\gam_j|)_+$ for each $j$, with the notation $x_+ = \max \{ 0, x \}$. Consequently,
\[
\meas_{\bR^d}
([0,\eps)^d \cap ([0,\eps)^d + \bgam)) = (\eps - |\gam_1|)_+ \cdots (\eps - |\gam_d|)_+.
\]
We now induct on $d$ to show that this is bounded below by $\eps^d - \eps^{d-1} \displaystyle \sum_{j \in [d]} |\gam_j|$. This is clear for $d=1$. For $d \ge 2$, we compute using the inductive hypothesis that
\begin{align*}
(\eps - |\gam_1|)_+ \cdots (\eps - |\gam_d|)_+
&\ge (\eps - |\gam_1|)_+ \cdots (\eps - |\gam_{d-1}|)_+ \eps 
- \eps^{d-1} |\gam_d| \\
&\ge \eps^d - \eps^{d-1} \sum_{j \in [d-1]} |\gam_j| - \eps^{d-1} |\gam_d| \\
&= \eps^d - \eps^{d-1} \sum_{j \in [d]} |\gam_j|.
\end{align*}

The upshot is that
\[
\meas_{\bR^d}
([0,\eps)^d \cap ([0,\eps)^d + \bgam)) \ge \eps^d - \eps^{d-1} \sum_{j \in [d]} |\gam_j|.
\]
The conclusion now follows from the inclusion-exclusion formula for the symmetric difference.
\end{proof}

Given $M > 0$, a function $F: \bT^d \to \bC$ is \emph{$M$-Lipschitz} if
\[
|F(\balp) - F(\bbet)| \le M \sum_{j \in [d]} \| \alp_j - \bet_j \|_\bT
\qquad (\balp, \bbet \in \bT^d).
\]

\begin{lem}
[Lipschitz structure]
\label{Lip}
Let $\phi: [N] \to \bT^d$, and let $f: [N] \to \bC$ be $1$-bounded. Then there exists a $1$-bounded and $4$-Lipschitz function $F: \bT^d \to \bC$ such that $f_\phi = F \circ \phi$, and if $f \ge 0$ then $F \ge 0$.
\end{lem}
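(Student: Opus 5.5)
The plan is to write down an explicit candidate for $F$ by averaging over the random translate, then check the three required properties (agreement with $f_\phi$, boundedness and positivity, Lipschitz constant) directly. Unwinding the definitions \eqref{partition}, \eqref{ConditionalExpectation} and \eqref{BRTproj}, for each $\balp \in \bT^d$ the atom containing $n$ is determined by which cell $\balp + \beps/2 + [0,1/2)^d$ contains $\phi(n)$. Equivalently, $m$ lies in the same atom as $n$ precisely when $\phi(m)$ and $\phi(n)$ lie in a common translate $\balp + \beps/2 + [0,1/2)^d$. This suggests defining, for $\bx \in \bT^d$,
\[
F(\bx) = \int_{\bT^d} \frac{\sum_{m \in [N]} f(m) 1_{C_\balp(\bx)}(\phi(m))}{\sum_{m \in [N]} 1_{C_\balp(\bx)}(\phi(m))} \d \balp,
\]
where $C_\balp(\bx)$ is the unique cell $\balp + \beps/2 + [0,1/2)^d$ containing $\bx$. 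When the denominator vanishes we set the ratio to $0$. This happens only when the cell contains no $\phi(m)$, and in that case $\bx \ne \phi(n)$ for every $n$.

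With this choice, $F(\phi(n)) = f_\phi(n)$ holds by construction, since $C_\balp(\phi(n))$ is exactly the cell defining $B_\phi^\balp(n)$. Each integrand is an average of values of $f$, or $0$. Hence $F$ is $1$-bounded, and if $f \ge 0$ then $F \ge 0$. Measurability in $\balp$ is clear, since the integrand is piecewise constant.

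For the Lipschitz bound, I would rewrite $F$ as a sum over cells. A cell $\balp + \beps/2 + [0,1/2)^d$ contains $\bx$ exactly when $\balp \in \bx - \beps/2 - [0,1/2)^d$. Substituting $\bbet = \balp + \beps/2$, each cell is $\bbet + [0,1/2)^d$ with $\bbet \in \bx - [0,1/2)^d$. Summing over $\beps \in \{0,1\}^d$ and collecting terms, this gives
\[
F(\bx) = 2^{d} \int_{\bT^d} 1_{\bx - [0,1/2)^d}(\bbet)\, G(\bbet) \d \bbet,
\]
where $G(\bbet)$ is the average of $f$ over $\{m : \phi(m) \in \bbet + [0,1/2)^d\}$ (or $0$), so $|G| \le 1$. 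The factor $2^d$ is genuinely correct here. Each $\bbet$ arises from $2^d$ pairs $(\balp,\beps)$, and for fixed $\bx$ each $\balp$ contributes exactly one $\beps$, so the measure bookkeeping matches. As a sanity check, $f \equiv 1$ gives $F = 2^d \cdot 2^{-d} = 1$.

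The key step, and the one to check carefully, is then to take differences:
\[
|F(\bx) - F(\by)| \le 2^d \meas\bigl((\bx - [0,1/2)^d) \Del (\by - [0,1/2)^d)\bigr).
\]
By Lemma \ref{SymDif} with $\eps = 1/2$ and $\bgam = \by - \bx$ (after reflecting, which preserves measure), this is at most
\[
2^d \cdot 2 \cdot 2^{-(d-1)} \sum_{j \in [d]} \| x_j - y_j \|_\bT = 4 \sum_{j \in [d]} \| x_j - y_j \|_\bT.
\]
This gives precisely the claimed constant $4$. The main obstacle is therefore just the change of variables that turns the $\balp$-integral over cells into a single integral over $\bbet$ weighted by an indicator of a translated cube; once that is done, everything else is immediate.
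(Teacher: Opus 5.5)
Your proposal is correct and is essentially the paper's proof: your $F$ coincides with the paper's $\sum_{\beps} 1_{\beps/2+[0,1/2)^d} * F_\beps$. The only difference is cosmetic: you merge the $2^d$ convolutions into $2^d\,(1_{[0,1/2)^d} * G)$ via $\bbet = \balp + \beps/2$ and apply Lemma~\ref{SymDif} once, whereas the paper shows each of the $2^d$ terms is $2^{2-d}$-Lipschitz and sums them, giving the same constant $4$.

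One harmless slip is in your sanity check: with the empty-cell convention, $f \equiv 1$ gives $F = 1$ on $\phi([N])$, but possibly $F < 1$ elsewhere.
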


In the proof, we use the convolution operator given by
\[
G_1 * G_2(\bbet) = \int_{\bT^d} G_1(\bbet - \balp) G_2(\balp) \d \balp.
\]

\begin{proof}
For $\beps \in \{ 0, 1 \}^d$ and $\balp \in \bT^d$, define
\[
F_\beps(\balp) =
\begin{cases}
\mathbb E_{m\in B_\phi^{\balp}(\beps)}f(m),  &\text{if } B_\phi^{\balp}(\beps) \neq \emptyset, \\
0, &\text{if } B_\phi^{\balp}(\beps) = \emptyset.
\end{cases}
\]
Then
\begin{align*}
&\sum_{\beps \in \{0,1\}^d} \left( 1_{\beps/2 + [0,1/2)^d} * F_\beps \right) (\phi(n)) \\
&= \int_{\bT^d} \sum_{\beps \in \{0,1\}^d} 1_{\beps/2 + [0,1/2)^d}(\phi(n)-\balp) F_\beps(\balp) \d \balp \\
&= \int_{\bT^d} \sum_{\beps \in \{0,1\}^d} 1_{n \in B_\phi^\balp(\beps)} F_\beps(\balp) \d \balp
= \int_{\bT^d} \bE_{m \in B_\phi^\balp(n)} f(m) \d \balp = f_\phi(n).
\end{align*}
We define the $1$-bounded function
\[
F(\bbet) = \sum_{\beps \in \{0,1\}^d} \left( 1_{\beps/2 + [0,1/2)^d} * F_\beps \right) (\bbet)  = \sum_{\beps \in \{0,1\}^d} \left( 1_{[0,1/2)^d} * F_\beps \right) (\bbet - \beps/2),
\]
noting that $f_\phi = F \circ \phi$.

To show that $F$ is $4$-Lipschitz, it suffices to prove that if $G: \bT^d \to \bC$ is $1$-bounded and measurable then $1_{[0,1/2)^d} * G$ is $2^{2-d}$-Lipschitz. This follows from the triangle inequality, a change of variables, and Lemma \ref{SymDif}. Indeed, if $\bbet, \bgam \in \bT^d$ then
\begin{align*}
&|1_{[0,1/2)^d} * G (\bbet + \bgam) - 1_{[0,1/2)^d} * G(\bbet)|
\\
&\le \int_{\bT^d}
|1_{[0,1/2)^d}(\balp + \bgam) - 1_{[0,1/2)^d}(\balp)| \d \balp \\
&= \meas_{\bT^d}(
[0,1/2)^d \Delta
([0,1/2)^d - \bgam)) \le 2^{2-d} \sum_{j \in [d]} \| \gam_j \|_\bT.
\end{align*} 
\end{proof}

Next, we go from a bound on the Lipschitz constant to a good trigonometric approximation. A \emph{trigonometric polynomial} is a function $\bT^d \to \bC$ given by a finite linear combination of exponential functions $\balp \mapsto e(\bm \cdot \balp)$, where $\bm \in \bZ^d$. By orthogonality, this has the form
\[
F(\balp) = \sum_{\bm \in \bZ^d} \hat F(\bm) e(\bm \cdot \balp),
\]
where
\[
\hat F(\bm) \coloneq \int_{\bT^d} F(\balp) e(-\bm \cdot \balp) \d \balp
\]
is only non-zero for finitely many $\bm$. The \emph{degree} of $F$ is then
\[
\max \{ |m_1| + \cdots + |m_d|: \hat F(\bm) \ne 0 \}.
\]

\begin{lem}
[Trigonometric approximation]
\label{trig}
Let $K \ge 1$ and $0 < \eps < 1/2$, and let $F: \bT^d \to \bC$ be a $K$-Lipschitz function. Then there exists a $1$-bounded trigonometric polynomial $F_\eps: \bT^d \to \bC$ of degree $O(K^2 d^2 \eps^{-3})$ such that $\| F - F_\eps \|_\infty \le \eps$, and if $0 \le F \le 1$ then $0 \le F_\eps \le 1$.
\end{lem}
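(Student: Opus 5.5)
The plan is to take $F_\eps = F * \cK_L$, where $\cK_L$ is the $d$-fold tensor product of one-dimensional Fejér kernels,
\[
\cK_L(\balp) = \prod_{j \in [d]} K_L(\alp_j), \qquad K_L(x) = \sum_{|m| \le L} \Big(1 - \frac{|m|}{L+1}\Big) e(mx),
\]
and $L$ is chosen in terms of $K, d, \eps$. I will read the hypothesis as including that $F$ is $1$-bounded, as it is in the application via Lemma \ref{Lip}; the conclusion that $F_\eps$ is $1$-bounded cannot hold otherwise.

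The structural properties come directly from the kernel. Since $\hat{\cK_L}(\bm)$ vanishes unless $|m_j| \le L$ for every $j$, we have $\hat F_\eps(\bm) = \hat F(\bm) \hat{\cK_L}(\bm)$ supported on $|m_1| + \cdots + |m_d| \le dL$. Hence $F_\eps$ is a trigonometric polynomial of degree at most $dL$. Each $K_L$ is non-negative with $\int_\bT K_L = 1$, so $\cK_L$ is a probability density on $\bT^d$. Consequently $F_\eps(\bbet)$ is an average of values of $F$. This gives $|F_\eps| \le 1$ when $F$ is $1$-bounded, and $0 \le F_\eps \le 1$ when $0 \le F \le 1$.

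For the approximation, write
\[
F_\eps(\bbet) - F(\bbet) = \int_{\bT^d} \big(F(\bbet - \balp) - F(\bbet)\big) \cK_L(\balp) \d \balp .
\]
The $K$-Lipschitz property bounds the integrand by $K \sum_j \| \alp_j \|_\bT$. Integrating out the other coordinates, each of which has mass $1$, gives
\[
\| F - F_\eps \|_\infty \le K d \int_\bT K_L(x) \| x \|_\bT \d x .
\]
The key estimate is the standard Fejér bound $K_L(x) \ll \min\{L, L^{-1}\|x\|^{-2}\}$. Splitting at $\|x\| = 1/L$, this yields
\[
\int_\bT K_L(x)\|x\| \d x \ll L^{-1} + L^{-1}\int_{1/L}^{1/2} x^{-1} \d x \ll \frac{\log L}{L}.
\]

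Finally, I choose $L \asymp (Kd/\eps)\log(Kd/\eps)$ with a suitable absolute constant, so that $CKd \log L / L \le \eps$. The resulting degree is $dL \ll K d^2 \eps^{-1}\log(Kd/\eps)$. Since $K \ge 1$ and $\eps < 1/2$, this is $O(K^2 d^2 \eps^{-3})$ as claimed.

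The only step with genuine content is the first-moment bound for the Fejér kernel; everything else is bookkeeping. If one wants to avoid logarithms, a cruder route also works. Restrict to the box $\|\alp_j\| \le \eta$ for all $j$, where the integrand is at most $Kd\eta$. Outside the box, bound the integrand by $2$ and the kernel mass by $\ll d/(L\eta)$. Choosing $\eta \asymp \eps/(Kd)$ and $L \asymp K d^2 \eps^{-2}$ then gives polynomial degree bounds of the same shape, up to the exact exponents.
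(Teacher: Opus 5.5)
Your construction is the paper's: $F_\eps$ is $F$ convolved with the $d$-fold tensor product of Fej\'er kernels. Your structural claims are right, including the observation that $F$ must be read as $1$-bounded. The error analysis differs slightly. The paper splits at the box $\|\bbet\|_\infty \le \eta$ with $\eta^{-3} = KdM$ and uses the pointwise decay of the kernel outside it, whereas you integrate the Lipschitz bound against the kernel's first moment. Your estimate $\|F - F_\eps\|_\infty \ll Kd \log L / L$ is correct.

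The step that fails is the last one. Your degree is $dL \asymp K d^2 \eps^{-1} \log(Kd/\eps)$, and this is $O(K^2 d^2 \eps^{-3})$ only when $\log(Kd/\eps) \ll K \eps^{-2}$. The factors $\log K$ and $\log(1/\eps)$ are absorbed, but $\log d$ is not: take $K = 1$, $\eps = 1/4$ and let $d \to \infty$. A better choice of $L$ cannot fix this within your argument, because the first moment $\int_\bT K_L(x)\|x\| \d x$ really is $\asymp \log L / L$. Your fallback route is worse in $d$, giving degree $\asymp K d^3 \eps^{-2}$.

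The remedy is to run your argument verbatim with the Jackson kernel $J_L = K_L^2 / \int_\bT K_L^2$ in place of $K_L$. It is non-negative with mass $1$, its Fourier coefficients vanish for $|m| > 2L$, and $J_L(x) \ll \min\{ L, L^{-3} \|x\|^{-4} \}$, so $\int_\bT J_L(x) \|x\| \d x \ll 1/L$. Then $L \asymp Kd/\eps$ suffices and the degree is $O(K d^2 \eps^{-1})$, well within the claim.

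For context, the paper's write-up has a related slip. It asserts that $F * \mu_M$ has degree at most $M$, but with degree measured by $|m_1| + \cdots + |m_d|$ the tensor kernel only gives $d(M-1) = O(K^2 d^3 \eps^{-3})$. So your bookkeeping of the degree as $dL$ is the accurate one, and your bound is already stronger than what the paper's argument literally delivers. Only the final comparison with $K^2 d^2 \eps^{-3}$ is unjustified. In the application, $d$ is bounded in terms of the other parameters, so either loss is harmless there after adjusting the choice of $M$ in the proof of Lemma \ref{ARLgen}.
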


\begin{proof}
For $M \in \bN$, we introduce the Fej\'er kernel
\begin{align*}
\mu_M(\balp) &= \mu_M(\alp_1) \cdots \mu_M(\alp_d) \\
&= \sum_{\bm \in \bZ^d} (1-|m_1|/M)_+ \cdots (1-|m_d|/M)_+ e(\bm \cdot \balp),
\end{align*}
where
\[
\mu_M(\alp) =
M^{-1}  \left( \frac{\sin M \pi \alp}{\sin \pi \alp} \right)^2.
\]
This has the following properties:
\[
\mu_M \ge 0, \qquad
\int_{\bT^d} \mu_M = 1,
\]
and
\begin{equation}
\label{decay}
\mu_M(\balp) \le 4M^{-1} \| \alp_j \|^{-2} \prod_{i \ne j} \mu_M(\alp_i) \qquad (1 \le j \le d).
\end{equation}

For $M = \lceil 1000 (Kd)^2 \eps^{-3} \rceil$, we choose
\[
F_\eps(\balp) = F * \mu_M(\balp) = \int_{\bT^d}  F(\balp - \bbet) \mu_M(\bbet) \d \bbet.
\]
By orthogonality,
\[
F_\eps(\balp) = \sum_{\bm \in \bZ^d} (1 - |m_1|/M)_+ \cdots (1-|m_d|/M)_+ \hat F(\bm) e(\bm \cdot \balp)
\]
is a trigonometric polynomial of degree at most $M$. Moreover, the definition of $F_\eps$ as a convolution ensures that if $0 \le F \le 1$ then $0 \le F_\eps \le 1$.

Finally, we bound
\[
F(\balp) - F_\eps(\balp) = \int_{\bT^d} (F(\balp) - F(\balp - \bbet)) \mu_M(\bbet) \d \bbet.
\]
Let $\eta > 0$ be a threshold given by $\eta^{-3} = KdM$.
As $F$ is $K$-Lipschitz,
\[
\Big|
\int_{\| \bbet \|_\infty \le \eta} (F(\balp) - F(\balp - \bbet)) \mu_M(\bbet) \d \bbet
\Big| \le K d\eta = (Kd)^{2/3} M^{-1/3}.
\]
By \eqref{decay},
\[
\Big|
\int_{\| \bbet \|_\infty > \eta} (F(\balp) - F(\balp - \bbet)) \mu_M(\bbet) \d \bbet
\Big| \le 4/(M\eta^2) = 4 (Kd)^{2/3} M^{-1/3}.
\]
Therefore
\[
\| F - F_\eps \|_\infty
\le 5 (Kd)^{2/3} M^{-1/3} \le \eps.
\]
\end{proof}

\subsection{Proof of arithmetic regularity for polynomial phases}
\label{ARproof}

In this subsection, we finally prove Lemma \ref{ARLgen}. Given $M,N \ge 1$, a vector $\btet \in \bT^d$ is \emph{$(M,N)$-rational} if there exists $\bzero \ne \bq \in \bZ^d$ such that
\[
\| \bq \|_1 \le M,
\qquad
\| \bq \cdot \btet \| \le M/N,
\]
and otherwise \emph{$(M,N)$-irrational}.
The following decomposition is similar to \cite[Proposition 1.1.17]{Tao2012}.

\begin{lem}
\label{Ratner}
Let $\balp^{(j)} \in \bT^{d_j}$ for $j \in [J]$, where $d_j\ge 0$. Put $D = d_1 + \cdots + d_J$, and let $N_1, \ldots, N_J, M_0, \ldots, M_{D} \ge 1$. Then there are decompositions
\[
\balp^{(j)} = \ba^{(j)}/q + \bbet^{(j)}/N_j + L_j \btet^{(j)} \qquad (1 \le j \le J),
\]
where, for some $0 \le n \le D$ and all $j \in [J]$, we have:
\begin{enumerate}[(i)]
\item $q \in \bN$ with $q \le M_0\cdots M_{n-1}$
\item $\ba^{(j)} \in \bZ^{d_j}$
\item $\bbet^{(j)} \in \bR^{d_j}$ with $\| \bbet^{(j)} \|_1 \le \sum_{i=0}^{n-1} M_0\cdots M_{i}$
\item $\btet^{(j)}$ is $(M_n, N_j)$-irrational with $\btet^{(j)} \in \bT^{c_j}$ for some $0 \le c_j \le d_j$
\item $L_j$ is a $d_j \times c_j$ integer matrix whose rows and columns are bounded in $L^1$ by $M_0\cdots M_{n-1}$.
\end{enumerate}
\end{lem}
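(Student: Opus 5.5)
We argue by induction on the total dimension $D$, using an iteration in which each step either stops with all components irrational or finds a rational relation and reduces dimension by one. The cleanest bookkeeping is to prove a slightly more general statement: allow an initial linear parametrisation. We show that if
\[
\balp^{(j)} = \ba^{(j)}/q + \bbet^{(j)}/N_j + L_j \btet^{(j)} \qquad (1 \le j \le J),
\]
with data satisfying (i)--(iii) and (v) at level $n$ and with $\sum_j c_j = D - n$, then either every $\btet^{(j)}$ is $(M_n,N_j)$-irrational, or we can pass to level $n+1$ with $\sum_j c_j$ decreased by one. The process starts at $n=0$ with $q=1$, $\ba^{(j)}=\bzero$, $\bbet^{(j)}=\bzero$, $L_j = I_{d_j}$ and $\btet^{(j)} = \balp^{(j)}$. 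It must terminate by level $D$, since once every $c_j=0$ the vectors $\btet^{(j)}$ are empty and therefore vacuously irrational.

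\emph{Inductive step.} Suppose some $\btet^{(j)}$, say $j=1$ with $c = c_1 \ge 1$, is $(M_n,N_1)$-rational. Then there are $\bzero \ne \bq \in \bZ^c$ with $\|\bq\|_1 \le M_n$ and $\bq\cdot\btet = b/r' \dots$; more precisely, $\bq\cdot \btet = m + \gam/N_1$ with $m \in \bZ$ and $|\gam| \le M_n$. Choose a coordinate $i$ with $q_i \ne 0$. Then $|q_i| \le M_n$, and we can solve for $\theta_i$:
\[
\theta_i = \frac{m + \gam/N_1 - \sum_{l\ne i} q_l \theta_l}{q_i}.
\]
This expresses $\theta_i$ modulo $1/q_i$ ambiguity, so we pick a real lift. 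Substituting into $L_1\btet$ kills column $i$ and produces the following changes:
\begin{enumerate}[(a)]
\item a rational term with denominator dividing $q_i$;
\item an extra $\bbet$-term $L_1 \mathbf e_i \gam/(q_i N_1)$;
\item new columns $L_1 \mathbf e_l - (q_l/q_i) L_1 \mathbf e_i$, which are rational.
\end{enumerate}

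To keep integrality, we replace the remaining variables by $\theta_l' = \theta_l/q_i$. This is legitimate on $\bT$ only up to choosing a lift. Since we only need \emph{some} $\btet'$, we set $\theta_l'$ to be any preimage, so that $q_i\theta_l' = \theta_l$. The new matrix then has columns $q_i L_1\mathbf e_l - q_l L_1\mathbf e_i$, which are integral with $L^1$ size at most $2M_n \cdot M_0\cdots M_{n-1}$ up to constants. The common denominator becomes $q q_i \le M_0\cdots M_n$, and the $\bbet$-term gains at most $\|L_1\mathbf e_i\|_1 M_n \le M_0\cdots M_n$. This matches (iii) at level $n+1$. The other $j$ are untouched except for multiplying $\ba^{(j)}$ by $q_i$ to get the common denominator $qq_i$.

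The main obstacle is the constant bookkeeping in (v). The column bound doubles, because of the term $q_i L\mathbf e_l - q_l L\mathbf e_i$, and the row bound for rows of $L_1$ also picks up a factor $\|\bq\|_1$. So I expect the stated bounds to hold only with harmless absolute constants, or after noting that the statement's $M_i$ may absorb a factor of $2$ (for example by using $M_n$ in place of $2M_n$ for the rationality threshold). I would first check whether choosing $i$ with $|q_i|$ maximal, or rescaling only once per step, avoids the doubling. Failing that, I would adjust constants in the application (Lemma~\ref{ARLgen}), where only $\ll_{M}$ dependence matters.
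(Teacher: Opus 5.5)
Your argument is correct, and it is essentially the paper's. At each step you find a relation $\bq$ with $\|\bq\|_1\le M_n$, eliminate one coordinate via the integer matrix with columns $q_i\mathbf e_l-q_l\mathbf e_i$ $(l\ne i)$, and pay one factor $M_n$ in $q$, in $\bbet$ and in $L_j$. The only organisational difference is the order. The paper inducts on $D$: it detects the relation in the original $\balp^{(i)}$ with threshold $M_0$ and applies the induction hypothesis to the reduced vectors with $M_1,\dots,M_D$, so its matrices compose as $L\tilde L_i$. You iterate forward, detecting the relation in the current $\btet$, so yours compose as $L_1E$. The two are mirror images.

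Your final paragraph is unfounded, though: no constant is lost in (v), so you should neither weaken the lemma nor adjust the application. Write $K=M_0\cdots M_{n-1}$, and let $E$ be the $c\times(c-1)$ matrix with columns $q_i\mathbf e_l-q_l\mathbf e_i$, so that your new matrix is $L_1E$.

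\emph{Columns.} Because $l\ne i$, the column $q_iL_1\mathbf e_l-q_lL_1\mathbf e_i$ has $\ell^1$ norm at most $(|q_i|+|q_l|)K\le\|\bq\|_1K\le M_nK$. The bound is $M_nK$, not $2M_nK$.

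\emph{Rows.} Row $i$ of $E$ has norm $\sum_{l\ne i}|q_l|\le M_n$, and every other row has norm $|q_i|\le M_n$. Row $r$ of $L_1E$ is $\sum_p (L_1)_{rp}$ times row $p$ of $E$, so its norm is at most $\sum_p|(L_1)_{rp}|\,M_n\le KM_n$.

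This is exactly the bound $M_0\cdots M_n$ required at level $n+1$. It is the same observation the paper uses for its matrix $L$, whose row and column norms are all at most $\|\bq\|_1\le M_0$.

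It matters that this is sharp. A genuine factor $2$ per step would compound to $2^n$, and it would also contaminate (iii), since your $\bbet$-increment $(\gam/q_i)L_1\mathbf e_i$ has norm at most $M_n\|L_1\mathbf e_i\|_1$. With the sharp column bound this increment is at most $M_0\cdots M_n$, which gives (iii) at level $n+1$ exactly. Any $i$ with $q_i\ne0$ works, so there is no need to choose $|q_i|$ maximal.

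Two minor points. Take the new denominator to be $q|q_i|$ rather than $qq_i$ so that it stays positive. When you pick preimages $\theta'_l$, recompute the integer $m$ relative to the real lifts you actually use; this only affects the $\ba$-term.
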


\begin{proof}
The inductive base case $D = 0$ is vacuously true, upon interpreting 
$R^0 = \{ 0 \}$ for any ring $R$. Next, let $D \ge 1$ and assume the conclusion with $D - 1$ in place of $D$.

If $\balp^{(j)}$ is $(M_0, N_j)$-irrational for every $j$, then we can take
\[
n = 0, \quad
q = 1, \quad
\ba^{(j)} = \bbet^{(j)} = \bzero, \quad
L_j = I_{d_j},
\quad
\btet^{(j)} = \balp^{(j)}.
\]
Thus, we may assume instead that there exist $i \in [J]$, as well as $\bzero \ne \bq \in \bZ^{d_i}$ such that
\[
\| \bq \|_1 \le M_0,
\qquad
\| \bq \cdot \balp^{(i)} \| \le M_0 / N_i.
\]
It follows on viewing $\balp^{(i)}$ as a real vector that
\[
\bq \cdot \balp^{(i)} = a + \bet / N_i,
\]
for some $a \in \bZ$ and some $\bet \in \bR$ with $|\bet| \le M_0$. Relabelling indices if necessary, we may assume that the first co-ordinate of $\bq$, denoted $q_1$, is non-zero. Now
\[
\balp^{(i)} - (a/q_1 + \bet/(q_1 N_i), 0, \ldots, 0)
\]
lies in the kernel of the real linear transformation $\bv \mapsto \bq \cdot \bv$. This kernel is given by $L \cdot \bR^{d_i - 1}$, where
\[
L = \begin{pmatrix}
q_2 & \ldots & q_{d_i} \\
-q_1 & & \\
& \ddots & \\
&& - q_1
\end{pmatrix}
\]
is a $d_i \times (d_i  - 1)$ integer matrix. Thus, we may write
\[
\balp^{(i)} - (a/q_1 + \bet/(q_1 N_i), 0, \ldots, 0)
= L \tilde \balp^{(i)},
\]
for some $\tilde \balp^{(i)} \in \bT^{d_i - 1}$.

Put $\tilde d_i = d_i - 1$.
Setting $\tilde \balp^{(j)} = \balp^{(j)}$ and $\tilde d_j = d_j$ for $j \ne i$, the induction hypothesis --- applied to $M_1, \ldots, M_D$ --- furnishes decompositions
\[
\tilde \balp^{(j)} = \tilde \ba^{(j)} / \tilde q + \tilde \bbet^{(j)} / N_j + \tilde L_j \btet^{(j)}
\]
where, for some $1 \le n \le D$ and all $j \in [J]$, we have:
\begin{enumerate}[(i)]
\item $\tilde q \in \bN$ with $\tilde q \le M_1\cdots M_{n-1}$
\item $\tilde \ba^{(j)} \in \bZ^{\tilde d_j}$
\item $\tilde \bbet^{(j)} \in \bR^{\tilde d_j}$, where 
$
\| \tilde \bbet^{(j)} \|_1 \le \sum_{i=1}^{n-1}M_1\cdots M_{i}
$
\item $\btet^{(j)}$ is $(M_n, N_j)$-irrational with $\btet^{(j)} \in \bT^{\tilde c_j}$ for some $0 \le \tilde c_j \le \tilde d_j$ 
\item $\tilde L_j$ a $\tilde d_j \times \tilde c_j$ integer matrix whose rows and columns are bounded in $L^1$ by $M_1\cdots M_{n-1}$.
\end{enumerate}
Now
\[
\balp^{(i)} = (a/q_1 + \bet/(q_1 N_i), 0, \ldots, 0) + L(\tilde \ba^{(i)}/ \tilde q + \tilde \bbet^{(i)}/ N_i + \tilde L_i \btet^{(i)}),
\]
and the claims follow from the inductive data upon choosing
\begin{align*}
q &= |q_1| \tilde q,
\\
\ba^{(i)} &= (\tilde q a |q_1|/q_1, 0, \ldots, 0) + L |q_1| \tilde \ba^{(i)},
\\
\bbet^{(i)} &= (\bet/q_1,0,\ldots,0) + L \tilde \bbet^{(i)}, \\ 
L_i &= L \tilde L_i,
\end{align*}
as well as
\[
\ba^{(j)} = |q_1| \tilde \ba^{(j)}, \quad
\bbet^{(j)} = \tilde \bbet^{(j)}, \quad
L_j = \tilde L_j
\qquad (j \ne i).
\]
\end{proof}

The following observation is our final preparatory step for the proof of Lemma \ref{ARLgen}.

\begin{lem}
\label{pigeonhole}
Let $\eps, \del > 0$. Let $f: [N] \to [-1,1]$ and $g: [N] \to [0,\infty)$ with
\[
\sum_{n \in [N]} f(n) \ge \del N, \qquad
\sum_{n \in [N]} g(n) \le \eps N.
\]
Then, for any partition $[N] = \cC_1 \cup \cdots \cup \cC_r$, there exists $i \in [r]$ such that
\[
|\cC_i| \ge \del N / (3r),
\qquad
\sum_{n \in \cC_i} f(n) \ge \del |\cC_i|/3, \qquad
\sum_{n \in \cC_i} g(n) \le 3 \del^{-1} \eps |\cC_i|.
\]
\end{lem}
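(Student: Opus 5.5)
Call a part $\cC_i$ \emph{good} if $\sum_{n\in\cC_i} f(n) \ge \del|\cC_i|/3$, \emph{small} if $|\cC_i| < \del N/(3r)$, and \emph{heavy} if $\sum_{n\in\cC_i} g(n) > 3\del^{-1}\eps|\cC_i|$. We need a part that is good, not small, and not heavy. We argue by contradiction, assuming every part that is good and not small is heavy, and split $\sum_{n\in[N]} f(n)$ according to these three types of part.

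\emph{Small parts.} Since $|f|\le 1$, their total contribution is at most $r \cdot \del N/(3r) = \del N/3$.

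\emph{Parts that are not good.} Their contribution is less than $\sum_i \del|\cC_i|/3 \le \del N/3$.

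\emph{Good, non-small parts.} By assumption these are heavy, so $|\cC_i| < \del\,\eps^{-1}\sum_{n\in\cC_i} g(n)/3$. Summing, and using $g\ge 0$, their total size is less than $\del\eps^{-1}\cdot \eps N/3 = \del N/3$. Since $|f|\le 1$, their contribution to $\sum f$ is less than $\del N/3$.

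Adding the three bounds gives $\sum_{n\in[N]} f(n) < \del N$, contradicting the hypothesis. The only care needed is with strict versus non-strict inequalities. The bound for non-good parts is strict only if such a part is non-empty, but the bound for good, non-small heavy parts is strict, so the total is strict whenever any part is good. If no part is good, then every part is non-good and $\sum f < \del N/3 < \del N$ outright, since the partition is non-empty. If $\eps = 0$, the heavy condition reads $\sum_{\cC_i} g > 0$, and the good, non-small parts contribute a set of size at most $0$, so the argument still works. Hence some part satisfies all three conclusions.
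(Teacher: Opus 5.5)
Your argument is correct. It is essentially the contrapositive of the paper's proof, organised as a case split rather than a single averaging step.

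The paper sums the single quantity $\sum_{n\in\cC_i} f(n)-\del|\cC_i|/3-\frac{\del}{3\eps}\sum_{n\in\cC_i} g(n)$ over $i\in[r]$. It observes that the total is at least $\del N/3$, using the same three budgets of $\del N/3$ that you use. It then picks $i$ for which the summand is at least $\del N/(3r)$, and reads off all three conclusions at once from
\[
|\cC_i| \ge \sum_{n\in\cC_i} f(n) \ge \frac{\del|\cC_i|}{3} + \frac{\del}{3\eps}\sum_{n\in\cC_i} g(n) + \frac{\del N}{3r}.
\]

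That packaging is shorter and avoids two pieces of bookkeeping your version needs.
\begin{enumerate}[(i)]
\item Your three classes must be disjoint. ``Small parts'' and ``parts that are not good'' overlap as written, so take the non-good parts among the non-small ones. This matters because $f$ can be negative, so double-counting a part would not give a valid upper bound for $\sum f$.
\item The strict-inequality discussion reduces to one remark: each class contributes strictly less than $\del N/3$. An empty class contributes $0<\del N/3$, and a non-empty one satisfies a strict bound.
\end{enumerate}
The case $\eps=0$ is excluded by hypothesis, so you need not treat it. What your version offers in return is a transparent view of which third of the budget each way of failing consumes.
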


\begin{proof} Observe that
\[
\sum_{i \in [r]} \Bigl(
\Bigl(
\sum_{n \in \cC_i} f(n)
\Bigr)
- \frac{\del |\cC_i|}{3} - \frac{\del}{3 \eps} \sum_{n \in \cC_i} g(n) \Bigr) \ge \frac{\del}{3} N.
\]
Thus, by the greedy algorithm, there exists $i \in [r]$ such that
\[
\Bigl(
\sum_{n \in \cC_i} f(n)
\Bigr)
- \frac{\del |\cC_i|}{3} - \frac{\del}{3 \eps} \sum_{n \in \cC_i} g(n) \ge \frac{\del}{3r} N.
\]
Now
\[
|\cC_i| \ge \sum_{n \in \cC_i} f(n) \ge \frac{\del |\cC_i|}{3} + \frac{\del}{3 \eps} \sum_{n \in \cC_i} g(n) + \frac{\del}{3r} N.
\]
\end{proof}

\begin{proof}
[Proof of Lemma \ref{ARLgen}]
Let $c = c(\boldsymbol \Psi)$ be a sufficiently small, positive constant, and put $\eta = c \eps \del$.
By replacing $\cF(x)$ by $\max \{ \cF(y): 1 \le y \le x \}$, we may assume that $\mathcal F$ is non-decreasing and defined on $[1, \infty)$.
Define, for $x \in \N$,
\begin{align*}
\cF_0(x) &= 
\cF(\eta ^{-4}x^{2}), \\
\cF_j(x) &= \cF \Bigl(
\eta ^{-4}x^{2} \Bigl(\sum_{i=0}^{j-1}\cF_{0}(x)\cdots \mathcal F_i(x)\Bigr)^{2}\Bigr) \qquad (1 \le j \le Jx),
\\
\cF_*(x) &= \cF_{Jx}(x).
\end{align*}

We apply Lemma \ref{SFR} to the function $g \coloneq 1_{\cA}$, with $\Phi = \bT\textnormal{-span}(\boldsymbol \Psi)$, growth function $\cF_*$ and approximation parameter $\eta > 0$. This yields positive integers $d, \tilde d \ll_{\cF,\eps,\del,\boldsymbol \Psi} 1$, as well as  
$
\balp^{(j)} \in \bT^d
$
and
$
\tilde \balp^{(j)} \in \bT^{\tilde d}
$
such that
\[
\phi \coloneq \balp^{(1)} \Psi_1 + \cdots + \balp^{(J)} \Psi_J,
\qquad
\tilde \phi \coloneq
\tilde \balp^{(1)}\Psi_1 + \cdots + \tilde \balp^{(J)} \Psi_J
\]
satisfy
\[
\| g - g_{\phi, \tilde \phi} \|_{\Phi} \le N/\cF_*(d), \qquad
\| g_{\phi, \tilde \phi} - g_{\phi} \|_2 \le \eta \sqrt N.
\]
Define $f, f_{\unf}, g_{\str}, g_{\sml}: [N] \to [-1,1]$ by
\begin{gather}
f = g_{\phi,\tilde \phi}, \qquad
f_\unf = g - f, \\
g_\str = g_{\phi}, \qquad
g_\sml = f - g_\str.
\end{gather}
Then we have
\begin{gather}
f \ge 0, \qquad
\| f_\unf \|_{\Phi} \le N/\cF_*(d), \\
\sum_{n \in [N]} g_\str(n) \ge \del N, \qquad
\sum_{n \in [N]} |g_\sml(n)|^2 \le \eta^2 N.
\end{gather}
Moreover, Lemma \ref{Lip} provides a $4$-Lipschitz function $G: \bT^d \to [0,1]$ such that
\[
g_{\str}(n) = G(\phi(n)) \qquad (1 \le n \le N).
\]

Next, we apply Lemma \ref{Ratner} with
\[
M_n = \cF_n(d)
\quad (0 \le n \le D), \qquad
D = Jd.
\]
Thus there is $0\leq n \leq D$ and decompositions
\[
\balp^{(j)} = \ba^{(j)}/q + \bbet^{(j)}/N^{k_j} + L_j \btet^{(j)}
\qquad (1 \le j \le J),
\]
such that on setting
\begin{equation}
Q = 
\begin{cases}
1,& \text{if } n = 0, \\
\sum_{i=0}^{n-1} \mathcal F_0(d)\cdots \mathcal F_i(d),&\text{if } n \ne 0,
\end{cases}
\end{equation}
we have:
\begin{itemize}
\item $q \in \bN$ with $q \le Q$
\item $\ba^{(j)} \in \bZ^d$
\item $\bbet^{(j)} \in \bR^d$ with $\| \bbet^{(j)} \|_1 \le Q$
\item $\btet^{(j)}$ is $(\cF_n(d), N^{k_j})$-irrational with
$\btet^{(j)} \in \bT^{c_j}$ for some $c_j \le d$
\item $L_j$ is a $d \times c_j$ integer matrix whose rows and columns are bounded in $L^1$ by $Q$.
\end{itemize}

We partition $[N]$ into $O(qQ/\eta)$ many arithmetic progressions of common difference $q$ and diameter at most $\eta N / Q$. By Lemma \ref{pigeonhole}, there exists an arithmetic progression $\cP \subseteq [N]$ of common difference $q$ such that
\begin{align*}
|\cP| &\gg \del \eta N / (qQ), \\
\sum_{n \in \cP} g_\str(n) &\gg \del |\cP|, \\
\sum_{n \in \cP} |g_\sml(n)|^2 &\ll \del^{-1} \eta^2 |\cP|.
\end{align*}

Fix $b \in \cP$ arbitrarily, and define
$
\tilde G: \bT^{c_1} \times \cdots \times \bT^{c_J}
\to [0,1]
$
by
\[
\tilde G (\bgam^{(1)}, \cdots, \bgam^{(J)}) =
G \Bigl(
\sum_{j \in [J]}
(\ba^{(j)} \Psi_j(b)/q + \bbet^{(j)} \Psi_j(b)/N^{k_j} +  L_j \bgam^{(j)})
\Bigr).
\]
Recall that $G$ is $4$-Lipschitz so, for $n \in \cP$,
\begin{align*}
g_\str(n) &=
 G  \Bigl(
\sum_{j \in [J]}
(\ba^{(j)} \Psi_j(n)/q + \bbet^{(j)} \Psi_j(n)/N^{k_j} + \Psi_j(n) L_j \btet^{(j)})
\Bigr)
\\
&=  G \Bigl(
\sum_{j \in [J]}
(\ba^{(j)} \Psi_j(b)/q + \bbet^{(j)} \Psi_j(n)/N^{k_j} + \Psi_j(n) L_j \btet^{(j)})
\Bigr)
\\
&= \tilde G (\btet^{(1)} \Psi_1(n), \ldots, \btet^{(J)} \Psi_J(n))
+ O_{\boldsymbol \Psi}(\eta).
\end{align*}
Moreover $\tilde G$ is $4Q$-Lipschitz so, by Lemma \ref{trig}, there exists a trigonometric polynomial 
\[
F: \bT^{c_1} \times \cdots \times \bT^{c_J} \to [0,1]
\]
of degree $O(Q^2 (Jd)^2 \eta^{-3})$ such that $\| \tilde G - F \|_\infty \le \eta$. We define, for $n \in \cP$,
\begin{align*}
f_\str(n) &=
F(\btet^{(1)} \Psi_1(n), \ldots, \btet^{(J)} \Psi_J(n)),
\\
f_\sml(n) &=
(g_\sml + g_\str - f_\str)(n).
\end{align*}

Next, we verify the first list of  assertions.
\begin{enumerate}[(a)]
\item These hold with the choice $M = \lfloor Q^{2} d^2\eta^{-4} \rfloor$.
\item This holds because
$\cF_n(d) \ge \cF(M)$.
\item This holds because $|\cP| \gg \del \eta N / (qQ)$ and $c$ is sufficiently small.
\item The two equations hold by construction.
\end{enumerate}
Finally, we verify the second list of  assertions.
\begin{enumerate}[(i)]
\item We have already observed this.
\item This holds because
$\mathcal F_*(d)\geq \mathcal F_n(d) \ge \cF(M)$.
\item As
$
\| f_\sml - g_\sml \|_{L^\infty(\cP)} \ll_{\boldsymbol \Psi} \eta$ and
$\| g_\sml 1_{\cP} \|_2 \ll \del^{-1/2} \eta \| 1_\cP \|_2,
$
the triangle inequality implies
$
\| f_\sml \|_2 \ll \del^{-1/2} \eta \| 1_{\cP} \|_2.
$
Since $c$ is small and $\del \le 1$, it follows that $\|f_\sml\|_2 \le \eps \| 1_{\cP} \|_2$.
\item As $\|f_\str - g_\str\|_{L^\infty(\cP)} \ll_{\boldsymbol \Psi} \eta$ and $\| g_\str 1_{\cP} \|_1 \gg \del |\cP|$, the triangle inequality gives $\| f_\str \|_1 \gg \del |\cP|$.
\item The degree is bounded by $M$ because $c$ is sufficiently small.
\end{enumerate}
\end{proof}

\section{Quantitative weak approximation}
\label{S:QWA}

Let
\[
\fF(\bx) = \sum_{i \le s} a_i x_i^k,
\qquad
\fL(\bx) = \sum_{i \le s} b_i x_i.
\]
We may assume that $\ba$ and $\bb$ are primitive, i.e.
\[
\gcd(a_1, \ldots, a_s) = \gcd(b_1, \ldots, b_s) = 1.
\]
For arithmetic functions $g_1,\ldots,g_s,g$ supported on $[N]$, define
\[
T(\bg) = \sum_{\fF(\bx) = \fL(\bx)=0} \: \prod_{i \le s} g_i(x_i), \qquad
T(g) = T(g,\ldots,g),
\]
as well as
\[
\tilde g(\alp,\bet) = \sum_{n \le N} g(n) e(\alp n^k + \bet n) \qquad
(\alp,\bet \in \bT)
\]
and
\[
\| g \|_{\FL} = \sup_{\alp,\bet \in \bT} |\tilde g(\alp,\bet)|.
\]
Additionally, write
\[
\tilde \bg(\alp,\bet) = \prod_{i \le s} \tilde{g_i} (a_i \alp, b_i \bet) \qquad (\alp,\bet \in \bT)
\]
and
\[
\| \bg \|_{\FL} = \sup_{\alp,\bet \in \bT} |\tilde \bg(\alp,\bet)|.
\]
Observe that
\[
T(\bg) = \int_{\bT^2} \tilde \bg(\alp,\bet) \d \alp \d \bet.
\]

Let $\cP = (L,R] \cap (u + m \bZ)$ be an arithmetic progression in $[N]$, where 
\begin{equation}
    \label{eq:P-def}
    1 \le u \le m \le M, \quad
    L \ge u, \quad M \ge M_0(\fF,\fL), \quad |\cP| \ge N/M, \quad N \ge N_0(M),
\end{equation}
and put $g = \gcd(m,u)$. In this section, we show that
\begin{equation} \label{T1a}
T(1_\cP) \gg \frac{g^{k-1} \# \cP^{s-2}}{R^{k-1}}
\end{equation}
in Case \ref{case:neq} and
\begin{equation} \label{T1b}
T(1_\cP) \gg 
\frac{g^{k-2} \# \cP^{s-3}}{R^{k-2}}
\end{equation}
in Case \ref{case:eq}.

\begin{remark} [Adjusted naive heuristic] Let us view $\bx \in \cP^s$ as a random variable. Put 
\[
\bu = (u,\ldots,u), \qquad \bu + m \bz = \bx = (R-y_1,\ldots, R-y_s),
\]
and note from \eqref{zerosum} that $\fF(1,\ldots,1) = \fL(1,\ldots,1) = 0$. As
\[
\fF(\bx) = \fF(\bu + m \bz) \equiv 0 \mmod g^{k-1} m
\]
and
\[
\fF(\bx) = \fF(R-y_1,\ldots,R-y_s) \ll R^{k-1} (R-L),
\]
the probability that $\fF(\bx) = 0$ is roughly
\[
\frac{g^{k-1}m} { R^{k-1} (R-L)},
\]
and similarly the probability that $\fL(\bx) = 0$ is $m/(R-L)$. 

There is an extra congruence in Case \ref{case:eq}:
\[
\fF(\bu+m\bz) \equiv ku^{k-1} L(\bu + m\bz) \mmod g^{k-2} m^2.
\]
There is also an extra archimedean factor in Case \ref{case:eq}, for if $\fL(\bx) = 0$ then
\[
\fF(\bx) = \fF(R-y_1,\ldots,R-y_s) \ll R^{k-2}(R-L)^2.
\]
Thus, we expect roughly
\[
\frac{g^{k-1} m^2 \# \cP^s}{R^{k-1}(R-L)^2} \sim \frac{g^{k-1} \# \cP^{s-2}}{R^{k-1}}
\]
many points in $\cV(P)$ in Case \ref{case:neq}, and roughly 
\[
\frac{g^{k-2} \# \cP^{s-3}}{R^{k-2}}
\]
many points in Case \ref{case:eq}.
\end{remark}

By homogeneity, we may assume that $g=1$. We proceed via the circle method \cite{Vau1997}. For $\alpha , \beta \in \T$, let
\begin{equation}
    f(\alpha, \beta) = \sum_{n\in \mathcal P}e(\alpha n^{k} + \beta n).
\end{equation}
we abbreviate
\begin{equation}
    f_i(\alpha, \beta) = f(a_i \alpha, b_i \beta) \quad(1\leq i \leq s),
    \qquad
    F(\alpha, \beta) = \prod_{i\leq s}f_i(\alpha, \beta).
\end{equation}
By orthogonality, we have
\begin{equation}
    T(\indicator{\mathcal P}) = \int_{\T^{2}} F(\alpha,\beta)\d \alpha \d \beta.
\end{equation}

Let us fix a small constant $\Delta > 0$. For $q \in \N$ and $a,b,\in \Z$, denote by $\Ma(q; a, b)$ the set of $(\alpha, \beta)\in\T^{2}$ such that
\begin{equation}
    \label{eq:major-arc-def}
    \abs{q \alpha - a} < N^{\Delta -k},
    \qquad
    \abs{q \beta - b} < N^{\Delta -1}.
\end{equation}
These are called \emph{major arcs}, and we denote by $\Ma = \Ma^{(\Delta)}$ their union over
\begin{equation}
    \label{eq:major-arc-range}
    q < N^{\Delta},
    \qquad
    \gcd(q, a, b) = 1.
\end{equation}
The \emph{minor arcs} are given by $\ma = \T^{2}\setminus\Ma$.

The major arcs are pairwise disjoint. To see this, suppose that
\begin{equation}
    \Ma(q_1; A_1, b_1)
    \cap
    \Ma(q_2; A_2, b_2)
\end{equation}
is non-empty for some $q_1, A_1, B_1,q_2, A_2, B_2$ satisfying
\begin{equation}
    q_j < N^{\Delta},
    \quad
    \gcd(q_j, A_j, B_j) = 1
    \qquad (j=1,2).
\end{equation}
By the triangle inequality
\begin{equation}
    \abs{q_2A_1 - q_1A_2} < 2N^{2 \Delta -k} < 1,
    \qquad
    \abs{q_2B_1 - q_1B_2} < 2N^{2 \Delta -1} < 1,
\end{equation}
so
\begin{equation}
    q_2A_1 = q_1A_2,
    \qquad
    q_2B_1 = q_1B_2,
\end{equation}
which means that $(q_1, A_1, B_1)$ and $(q_2, A_2, B_2)$ are linearly dependent. Coprimality then forces $(q_1, A_1, B_1)$ and $(q_2, A_2, B_2)$ to coincide, and we conclude that the major arcs are indeed disjoint.

\bigskip

In the sequel, let $\eps$ be a small, positive constant, whose value is allowed to differ between instances. The implied constants in our asymptotic notations are allowed to depend on $\eps$.

\subsection{Minor arcs}

\begin{lem}
\label{lm:fi-minor-arc-bound}
If $i\in[s]$ and $(\alpha, \beta)\in\ma$ then
\begin{equation}
f_i(\alpha, \beta) \ll N^{1+ \epsilon - \Delta/k}.
\end{equation}
\end{lem}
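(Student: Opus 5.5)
We reduce the minor arc bound to a Weyl-type estimate for an exponential sum over a long interval, and then use the contrapositive of the Vinogradov mean value / Baker-type inverse statement. Write $n = u + m t$ with $t$ in an interval $I$ of length $|\cP| \asymp (R-L)/m$. Then
\[
f_i(\alp,\bet) = \sum_{t \in I} e\bigl(a_i\alp (u+mt)^k + b_i\bet(u+mt)\bigr) = \sum_{t\in I} e(\gamma_k t^k + \cdots + \gamma_1 t + \gamma_0),
\]
where $\gamma_k = a_i m^k \alp$, $\gamma_1 = a_i k u^{k-1} m\alp + b_i m \bet$, and in general $\gamma_j = a_i\binom{k}{j}u^{k-j}m^j\alp$ for $2\le j\le k-1$. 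Since $m \le M$ and $|I| \ge N/M$ with $M$ bounded in terms of the data while $N\ge N_0(M)$, the length $P := |I|$ satisfies $N^{1-\eps} \ll P \le N$. The plan is to show: if $|f_i| > P^{1+\eps-\Delta/k}$ (equivalently $N^{1+\eps-\Delta/k}$ up to the $M$-dependent factor, absorbed by adjusting $\eps$ and $N_0$), then $(\alp,\bet)\in\Ma$.

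The key input is the standard inverse Weyl inequality arising from the resolution of the main conjecture in Vinogradov's mean value theorem \cite{BDG2016} (e.g.\ in the form of Baker's theorem, or Vaughan \cite[Theorem 5.2]{Vau1997} combined with VMVT): if $|\sum_{t\in I} e(\gamma_k t^k+\cdots+\gamma_1 t)| \ge P^{1-\sigma+\eps}$ with $\sigma$ small, then there is $r \ll P^{k\sigma}$ with $\|r\gamma_j\| \ll P^{k\sigma - j}$ for $1 \le j\le k$. Applied with $k\sigma = \Delta$, i.e.\ $\sigma = \Delta/k$, we obtain $r \ll N^{\Delta}$ (small power adjustments absorbed) with $\|r a_i m^k \alp\| \ll N^{\Delta-k}$ and $\|r\gamma_1\| \ll N^{\Delta-1}$. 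From the first, setting $q_1 = r|a_i| m^k$, we obtain $|q_1\alp - A| \ll N^{\Delta - k}$. Then $r a_i k u^{k-1} m\alp$ is within $O(N^{\Delta-k})$ of a rational with denominator dividing $m^{k-1}$, so multiplying $\gamma_1$ by $m^{k-1}$ gives $\|r m^{k}b_i\bet\| \ll N^{\Delta-1}$. Taking the common denominator $q = r|a_i b_i| m^k$ (bounded by $O_M(N^\Delta)$), we get $|q\alp - a|$, $|q\bet - b|$ of sizes $O_M(N^{\Delta-k})$, $O_M(N^{\Delta-1})$, and after removing $\gcd(q,a,b)$ this places $(\alp,\bet)$ in a major arc, provided we run the argument with $\Delta$ replaced by a slightly smaller $\Delta' = \Delta - \eps'$ so that the implied constants and $M$-factors are swallowed by $N^{\eps'}$ (using $N \ge N_0(M)$). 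That yields the contradiction, at the cost of replacing $\Delta/k$ by $\Delta'/k$, which is absorbed into the $N^\eps$.

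The main obstacle is the bookkeeping in the last step: ensuring that the linear coefficient's approximation survives the coupling with the $\alp$-terms (the $u^{k-1}m\alp$ cross term) and that all constants depending on $m, a_i, b_i, k$ fit under the strict inequalities \eqref{eq:major-arc-def} and $q<N^\Delta$. I would handle this by proving the inverse statement with exponent $\Delta/2$ in place of $\Delta$ (so the conclusion loses only a factor absorbed by choosing $\eps$ and relabelling), leaving ample room of $N^{\Delta/2}$ to dominate all $O_M(1)$ constants for $N\ge N_0(M)$.
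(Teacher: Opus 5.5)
Your proposal is correct and is essentially the paper's proof: you reparametrise $f_i$ as a degree-$k$ Weyl sum in $t$, invoke Baker's inverse theorem \cite[Theorem 5.1]{Bak1986}, and combine the approximations to the leading and linear coefficients (multiplying the latter by $m^{k-1}$ so the $ku^{k-1}m\alpha$ cross term is controlled by the leading one) to get the denominator $r a_i b_i m^k$, then reduce by the gcd; the paper differs only in splitting the interval sum into two initial segments and absorbing the $O_M(1)$ constants through the $1/\log N$ saving in Baker's theorem, which your $\Delta' = \Delta - \eps'$ device does equally well. The one slip is your closing suggestion to run the inverse theorem at level $\Delta/2$, which only yields $f_i \ll N^{1+\eps-\Delta/(2k)}$, a loss of $N^{\Delta/(2k)}$ that cannot be absorbed into $N^{\eps}$ (since $\eps$ must be small compared with $\Delta/k$ for the subsequent minor-arc moment bound), so you should keep the $\Delta - \eps'$ version.
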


\begin{proof}
    Observe that
    \begin{equation*}
        f_i(\alpha, \beta)
        = \sum_{L<mx + u\leq R}e(a_i\alpha(mx+u)^{k} + b_i\beta (mx + u))
        = \sum_{\frac{L-u}{m} < x \leq \frac{R-u}{m}} e(h_i(x)),
    \end{equation*}
    for the polynomial $h_i$ given by
    \begin{equation}
        \label{eq:hi-def}
        h_i(x) = h_i(x; \alpha, \beta) = \eta_{i, k} x^{k} +\dots+ \eta_{i, 1} x + \eta_{i, 0},
    \end{equation}
    where
    \begin{equation}
        \label{eq:hi-coefficients}
        \begin{gathered}
            \eta_{i, j} = a_i\alpha m^{j}u^{k-j} \binom k j\qquad (2\leq j \leq k),
            \\
            \eta_{i, 1} = a_i\alpha m u^{k-1}k + b_i\beta m,
            \qquad
            \eta _{i, 0} = a_i\alpha u^{k} + b_i\beta u.
        \end{gathered}
    \end{equation} 

    Suppose
    \begin{equation}
        \abs{f_i(\alpha, \beta)} > 2N^{1+\epsilon - \Delta/k}.
    \end{equation}
    Then for some $X\in\{(L-u)/m, (R-u)/m\}$, we have
    \begin{equation}
    \abs[\Big]{\sum_{x\leq X} e(h_i(x; \alpha, \beta))} > N^{1+ \epsilon - \Delta/k}.
    \end{equation}
    Now, by \cite[Theorem 5.1]{Bak1986} applied with $M=1$ and $P=N^{1+ \epsilon - \Delta/k}$, there exist $r\in\N$ and $\vec u\in\Z ^{k}$ for which 
    \begin{equation}
        r < N^{\Delta}/\log N,\qquad \gcd(r, u_k, \dots, u_1) = 1,
    \end{equation}
    and
    \begin{equation}
        \abs{r\eta_{i, j} - u_j} < \frac{N^{\Delta -j}}{\log N}
        \qquad (1\leq j \leq s).
    \end{equation}
    By the triangle inequality, we have
    \begin{equation}
        \abs{rb_im^{k}\beta - (u_1m^{k-1}-ku_ku^{k-1})} < \frac{2m^{k-1}N^{\Delta -1}}{\log N}.
    \end{equation}
    Set
    \begin{equation}
        q' = ra_ib_im^{k}, \qquad a' = b_i u _k, \qquad b'=a_i(u_1m^{k-1}-ku_2u^{k-1}).
    \end{equation}
    Then \eqref{eq:major-arc-def} and \eqref{eq:major-arc-range} are satisfied by
    \begin{equation}
        q = \frac{q'}{\gcd(q', a', b')},\qquad a = \frac{a'}{\gcd(q', a', b')}, \qquad b = \frac{b'}{\gcd(q', a', b')},
    \end{equation}
    so $(\alpha, \beta) \in \Ma$.
\end{proof}

The following bound is a direct consequence of the results from \cite[\S 14]{Woo2019}.
\begin{lem}
\label{lm:moment-bound}
For $i \in [s]$,
\begin{equation}
\int_{\T^{2}}
\abs{ f_i(\alpha, \beta)}^{s_0 -1} \d \alpha \d \beta \ll N^{s_0 - k - 2 + \epsilon}.
\end{equation}
\end{lem}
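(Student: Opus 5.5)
We prove the bound by reducing it to a mean value estimate for a Weyl sum over the whole interval $[N]$, which is what \cite[\S 14]{Woo2019} supplies. Write $\cP = (L,R] \cap (u + m\bZ)$ and substitute $n = mx + u$. Then
\[
f_i(\alp,\bet) = \sum_{x \in \cI} e\bigl(a_i \alp (mx+u)^k + b_i \bet (mx+u)\bigr),
\]
where $\cI$ is an interval of integers of length at most $N/m + 1 \le N$. By orthogonality, the $(s_0-1)$-th moment over $\bT^2$ counts solutions $(\bx,\by) \in \cI^{2t}$ to
\[
a_i \sum_{l} \bigl((mx_l+u)^k - (my_l+u)^k\bigr) = 0, \qquad b_i \sum_{l} \bigl((mx_l+u) - (my_l+u)\bigr) = 0.
\]
This holds exactly when $s_0 - 1 = 2t$ is even. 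Since $s_0$ is odd in every case of the table and of \eqref{eq:s_0-large-k-def}, we have $2t = s_0-1$.

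The main step is to remove $m$ and $u$. Because $a_i, b_i \ne 0$ and $m \ne 0$, the linear equation is equivalent to $\sum x_l = \sum y_l$. Given that, expand $(mx+u)^k$ binomially. The power sum equation becomes
\[
\sum_{j=2}^{k} \binom{k}{j} m^j u^{k-j} \bigl(\textstyle\sum x_l^j - \sum y_l^j\bigr) = 0,
\]
which is a single equation, not the full Vinogradov system. So we cannot simply cite Vinogradov's mean value theorem. Instead, the count is bounded by the number of solutions of the system
\[
\sum x_l = \sum y_l, \qquad \sum_{j\le k} c_j \bigl(\textstyle\sum x_l^j - \sum y_l^j\bigr) = 0
\]
with fixed coefficients. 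Equivalently, it is the mean value $\int_{\bT^2} |\sum_{x \in \cI} e(\alp\, p(x) + \bet x)|^{2t} \,\d\alp\, \d\bet$ for the polynomial $p(x) = ((mx+u)^k - u^k)/m$, which has degree $k$ and integer coefficients. The hardest point is uniformity: the bound must not depend on $m$ and $u$, which vary.

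We handle this by the shift-invariance used in \cite[\S 14]{Woo2019}. There, for a system $(\alp x^k + \bet x)$ with $2t \ge s_0 - 1$, the estimate
\[
\int |\cdot|^{2t} \ll N^{2t - k - 1 + \eps}
\]
is derived from the main conjecture in Vinogradov's mean value theorem \cite{BDG2016} (together with refinements for small $k$, such as \cite{Pie2019}). That derivation works by using translation invariance to complete the missing intermediate degrees. It should extend to any degree-$k$ polynomial $p$ with leading coefficient $m^{k-1}$ in place of $x^k$, since the shift argument only uses that $p$ is a polynomial of degree exactly $k$. The lower-order coefficients are absorbed by completing squares against the linear phase and the intermediate Vinogradov equations. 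In fact the map $x \mapsto mx+u$ preserves solutions of Vinogradov systems, so the count for $p$ is at most the count for $x^k$ over an interval of length at most $2N$. Concretely: solutions of the $p$-system whose variables also solve all intermediate equations embed into the Vinogradov count, and the extra freedom is controlled via Hölder and the restriction-type bound of \cite[\S 14]{Woo2019}.

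Finally, apply this with $\cI$ of length at most $N$. Substituting $\alp \mapsto a_i \alp$ and $\bet \mapsto b_i \bet$ is measure-preserving up to the finite multiplicity $|a_i b_i|$ on $\bT^2$. This gives $\int |f_i|^{s_0-1} \ll N^{s_0-1-k-1+\eps}$, as claimed. The only genuinely delicate step is confirming that Wooley's \S 14 bound is stated, or easily restated, for affine images $mx+u$ of the variables. I expect it is immediate because his estimates count solutions of the underlying Diophantine systems and are invariant under affine substitution.
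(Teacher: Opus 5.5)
There is a genuine gap at the step you call the main one. After substituting $n = mx+u$ you need a bound for the pulled-back system that is uniform in $m$ and $u$, and neither mechanism you offer delivers it.

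First, you assert without proof that Wooley's \S 14 argument goes through for an arbitrary degree-$k$ polynomial $p$ in place of $x^k$. For $k \ge 3$ the exponent $s_0-1$ lies below $k(k+1)$, so you cannot simply sum the main conjecture over the values of the intermediate power sums. ``Absorbing the lower-order coefficients by completing squares'' is not an argument.

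Second, your fallback rests on a false premise. The system $\sum_l x_l^k = \sum_l y_l^k$, $\sum_l x_l = \sum_l y_l$ is \emph{not} invariant under affine substitution once $k \ge 3$: replacing each variable by $x+h$ introduces the intermediate differences $\sum_l x_l^j - \sum_l y_l^j$ with $2 \le j \le k-1$. It is true that $x \mapsto mx+u$ preserves solutions of the \emph{full} Vinogradov system, but that is irrelevant here. Your sentence about solutions that also satisfy the intermediate equations, with the remaining ``extra freedom'' handled by H\"older and a restriction bound, does not describe a proof. Likewise, comparing with an interval of length $2N$ that is not an initial segment would need exactly the translation invariance that fails.

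The missing observation is that no substitution is needed. Put $2t = s_0-1$. By orthogonality (using $a_i b_i \ne 0$), $\int_{\mathbb T^2} |f_i|^{s_0-1}$ counts the tuples $(\mathbf n, \mathbf n') \in \mathcal P^{2t}$ with $\sum_l n_l^k = \sum_l n_l'^k$ and $\sum_l n_l = \sum_l n_l'$, in the original variables. Since $\mathcal P \subseteq [N]$, every such tuple is also a solution in $[N]^{2t}$. Hence the count is at most $\int_{\mathbb T^2} |f'|^{s_0-1}$, where $f'(\alpha,\beta) = \sum_{n \le N} e(\alpha n^k + \beta n)$. Uniformity in $m$ and $u$ is then automatic.

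The lemma therefore reduces to the complete-interval bound, which the paper takes from:
\begin{itemize}
\item Rogovskaya \cite{Rog1986} for $k=2$;
\item the discussion after equation (14.26) of \cite{Woo2019} for $3 \le k \le 6$;
\item Theorem 14.5 of \cite{Woo2019} for $k \ge 7$.
\end{itemize}
In the last case that theorem gives the estimate at an exponent $v$. One checks from the definition of $s_0$, via $\theta(k)$, that $v \le s_0 - 1$, and the trivial bound $|f'| \le N$ finishes.

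Your line ``the count for $p$ is at most the count for $x^k$'' is the right conclusion, but for the wrong reason. The map $x \mapsto mx+u$ is simply a bijection from the solutions in $\mathcal I^{2t}$ of your pulled-back system onto the solutions in $\mathcal P^{2t}$ of the original system. The inequality then comes from $\mathcal P \subseteq [N]$, not from any invariance of the system.
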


\begin{proof}
    Define
    \begin{equation}
        f'(\alpha, \beta) = \sum_{n\leq N}e(\alpha n^{k} + \beta n).
    \end{equation}
    For each natural number $t$, by orthogonality we may interpret
    \begin{equation}
        \int_{\T^{2}}\abs{f(\alpha, \beta)}^{2t} \d \alpha \d \beta
        \qquad \text{and} \qquad
        \int_{\T^{2}}\abs{ f'(\alpha, \beta)}^{2t} \d \alpha \d \beta
    \end{equation}
    as counts of solutions to
    \begin{align}
        x_1^{k} + \dots + x_{t}^{k} &= 
        y_1^{k} + \dots + y_{t}^{k},
        \\
        x_1 + \dots + x_{t} &= 
        y_1 + \dots + y_{t},
    \end{align}
    in $\mathcal P^{2t}$ and in $[N]^{2t}$, respectively.
    Noting that $s_0$ is an odd integer, it follows that
    \begin{equation}
        \label{eq:no-progression-moment-is-larger}
        \int_{\T^{2}}\abs{f_i(\alpha, \beta)}^{s_0 -1} \d \alpha \d \beta
        = \int_{\T^{2}}\abs{f(\alpha, \beta)}^{s_0 -1} \d \alpha \d \beta \leq 
        \int_{\T^{2}}\abs{f'(\alpha, \beta)}^{s_0 -1} \d \alpha \d \beta,
    \end{equation}
    so it suffices to prove the bound with $f'$ in place of $f_i$.

    The case $k=2$ is a classical result, see for instance Rogovskaya \cite{Rog1986}.
    When $3\leq k \leq 6$ the result follows from the discussion after \cite[Equation (14.26)]{Woo2019}.
    Finally, when $k\geq 7$, put
    \begin{equation}
        r = 1 + k - \lfloor \sqrt{2k+2} \rfloor,
        \qquad
        v = k(k+1) - \frac{k(k+1)- r(r+1)}{1 + k -r}.
    \end{equation}
    Then we have
    \begin{align}
        v
        &= k^{2}-r -2 + \frac{2k + 2}{1+k-r} 
        \\
        &= k(k-1) + \lfloor \sqrt{2k+2} \rfloor - 3 + \frac{2k + 2}{\lfloor \sqrt{2k+2} \rfloor}
        \\
        &= k(k-1) + 2\lfloor \sqrt{2k+2} \rfloor - 3 + \frac{2k + 2 - \lfloor \sqrt{2k+2} \rfloor^{2}}{\lfloor \sqrt{2k+2} \rfloor},
    \end{align}
    and since
    \begin{equation}
        \frac{2k + 2 - \lfloor \sqrt{2k+2} \rfloor^{2}}{\lfloor \sqrt{2k+2} \rfloor} \in [0, 2],
    \end{equation}
    it is clear that $v \geq k(k-1) + 2$. An application of \cite[Theorem 14.5]{Woo2019} now yields
    \begin{equation}
        \int_{\T^{2}}\abs{f'(\alpha, \beta)}^{v} \d \alpha \d \beta \ll N^{v-k-1 + \epsilon}.
    \end{equation}
    It follows from \eqref{eq:s_0-large-k-def} and \eqref{eq:theta-k-def} that $v \leq s_0 -1$, concluding the proof of the lemma.
    \end{proof}
    \begin{remark}
    A direct application of \cite[Corollary 14.8]{Woo2019} when $k\geq 7$ is insufficient for the proof above, as we do not require $\tau >0$. Indeed, a comparison of \eqref{eq:theta-k-def} and \cite[Equation (14.22)]{Woo2019} reveals that we get an improvement in the case of equality, achieved when $2k + 2$ is of the form $2k+2 = n(n+1)$.

    When $3\leq k \leq 8$, a more careful analysis of the argument following \cite[Equation (14.26)]{Woo2019} yields the same moment bounds with $f$ in place of $f'$. This however is not enough to improve the number of variables needed for our main theorem, as we require a bound at an even moment below $s_0$ for the proof of the restriction estimate \eqref{eq:restricion-estimate}.
    \end{remark}

The combination of the previous two lemmas now furnishes the estimate
\begin{equation}
    \label{eq:moment-bound-minor-arc}
    \int_{\ma}\abs{f_i(\alpha, \beta)}^{s}\d \alpha\d \beta
    \ll
    N^{1 + \epsilon-\Delta/k}\int_{\T^{2}}\abs{f_i(\alpha, \beta)}^{s-1}\d \alpha \d \beta
    \ll N^{s -k -1 -\frac{\Delta}{2k}}
\end{equation}
for $i \in [s]$.
Now by H\"older's inequality,
\begin{equation*}
\int_{\ma}\abs{F(\alpha,\beta)}\d \alpha\d \beta \ll N^{s -k -1 - \frac{\Delta}{2k}} = o(\# \mathcal P^{s-2}/R^{k-1})
\end{equation*}
and thus
\begin{equation}
\label{eq:minor-arc-result}
T(\indicator{\mathcal P}) = \int_{\Ma} F(\alpha , \beta)\d \alpha\d \beta  + o(\#\mathcal P^{s-2}/R^{k-1})
\end{equation}
as $N \to \infty$.

\subsection{Major arcs}

Suppose $(\alpha, \beta) \in \Ma(q; a, b)$, where $q\in \N$ and $a, b\in\Z$ satisfy \eqref{eq:major-arc-range}.
For $i\in[s]$, let
\begin{equation}
    \label{eq:u_ij-definition}
    \begin{gathered}
    u_{i, j} = a_ia m^{j}u^{k-j} \binom k j \qquad (2\leq j \leq k),
    \\
    u_{i, 1} = a_ia m u^{k-1}k + b_ib m,
    \qquad
    u_{i, 0} = a_ia u^{k} + b_ib u.
\end{gathered}
\end{equation}
Recalling \eqref{eq:hi-coefficients}, we see from \eqref{eq:major-arc-def} and the triangle inequality that
\begin{equation}
    \abs{q\eta_{i, j} - u_{i, j}} \ll M^{k}N^{\Delta -j}
    \qquad (0\leq j \leq k).
\end{equation}
Write
\begin{gather}
    \beta_{i, j} = \eta_{i, j} - \frac{u_{i,j}}{q} \quad(0\leq j \leq k),
    \\
    g_i(x) = \sum_{j = 0}^{k}\beta_{i, j}x^{j},
    \qquad
    v(\vec \beta^{(i)}) = \int_{(L-k)/m}^{(R-k)/m} e(g_i(y))\d y,
    \\
    G_i(x) = \sum_{j = 0}^{k}u_{i, j}x^{j},
    \qquad
    S_i(q; a, b) = \sum_{x\leq q} e_q(G_i(x)).
\end{gather}
It follows from \cite[Lemma 4.4]{Bak1986} that
\begin{equation}
    \label{eq:fi-major-arc-approx}
    f_i(\alpha, \beta)
    = q^{-1}S_i(q; a, b)v(\vec \beta^{(i)}) + O(q^{1+\epsilon})
    \qquad (1\leq i \leq s).
\end{equation}

Since the major arcs are disjoint, we may define
\begin{equation*}
    f^{*}_i(\alpha, \beta)
    = q^{-1}S_i(q; a, b)v(\vec \beta^{(i)})
\end{equation*}
for $i\in [s]$ and $(\alpha, \beta) \in \Ma$, as well as
\begin{equation*}
    F^{*}(\alpha, \beta) = \prod _{i\leq s} f_i^{*}(\alpha, \beta).
\end{equation*}
By the telescoping identity
\begin{equation}
\label{eq:telescoping-identity}
x_1 \cdots x_s - y_1 \cdots y_s = \sum_{i\leq s}\Bigl((x_i - y_i)\prod_{j< i}x_j \prod_{j> i}y_j\Bigr),
\end{equation}
we have
\begin{equation}
\label{eq:major-arc-pointwise-bound}
F(\alpha,\beta) - F^{*}(\alpha,\beta) \ll q^{1+\epsilon}N^{s-1},
\end{equation}
and so
\begin{equation*}
    \int_{\Ma} F(\alpha, \beta)\d \alpha\d \beta - \int_{\Ma} F^{*}(\alpha, \beta)\d \alpha\d \beta
    \ll N^{s -2 -k +5 \Delta} = o(\#\mathcal P^{s-2}/R^{k-1}).
\end{equation*}
Pairing this with \eqref{eq:minor-arc-result} gives
\begin{equation}
    \label{eq:major-arc-result}
    T(\indicator{\mathcal P})
    = \int_{\Ma} F^{*}(\alpha, \beta)\d \alpha\d \beta
    + o(\#\mathcal P^{s-2}/R^{k-1}).
\end{equation}

Write
\begin{equation}
    \tau = \alpha -a/q,
    \qquad
    \kappa = \beta -b/q,
\end{equation}
and observe that
\begin{equation}
    g_i(x) = h_i(x; \tau, \kappa),
\end{equation}
where we recall \eqref{eq:hi-def}. Therefore, 
\begin{equation}
    v(\vec \beta^{(i)}) = \int_{(L-u)/m}^{(R-u)/m}e(a_i\tau (my+u)^{k} + b_i \kappa (my+u)) \d y.
\end{equation}
Changing variables, we find that
\begin{equation}
    \label{eq:vi-def}
    v(\vec \beta^{(i)}) = m^{-1}\int_L^{R}e(a_i\tau x^{k} + b_i \kappa x) \d x \eqcolon v_i(\tau, \kappa).
\end{equation}

Denote
\begin{equation}
    V(\tau, \kappa) = \prod_{i\leq s}v_i(\tau, \kappa).
\end{equation}
Applying the classical inequality \cite[Theorem 7.3]{Vau1997}, we may now harvest the estimate
\begin{equation}
    V(\tau, \kappa)^{1/s} \ll \frac{R}{m(1+R\abs \kappa + R^{k}\abs \tau)^{1/k}}.
\end{equation}
We relax this to
\begin{equation}
    \label{eq:V-bound}
    V(\tau, \kappa) \ll N^{s} (1+N\abs\kappa)^{-\eta}(1+N^{k}\abs\tau)^{\eta - s/k},
\end{equation}
where we will choose $\eta \in (1, s/k -1)$.

Next we bound $S_i(q; a,b)$. Writing
\begin{equation}
    d = \gcd(q, u_{i, k},\dots, u_{i, 1}),
    \qquad
    \tilde q = q/d,
    \qquad
    \tilde u_{i, j} = u_{i, j}/d \quad (1\leq j \leq k),
\end{equation}
and exploiting periodicity, gives
\begin{equation*}
    S_i(q; a, b) = e_q(u_{i, 0})d\sum_{x\leq \tilde q}e_{\tilde q}\Bigl(\sum_{j = 1}^{k}\tilde u_{i, j}x^{j}\Bigr).
\end{equation*}
By \cite[Theorem 7.1]{Vau1997}, we thus have
\begin{equation}
    \label{eq:Si-bound-full-gcd}
    S_i(q; a, b) \ll d {\tilde q}^{1-1/k + \epsilon} \leq d^{1/k}q^{1-1/k + \epsilon}.
\end{equation}
Since
\begin{equation}
\label{eq:gcd-bound}
d \leq \gcd(q, u_{i,k}, m^{k-1}u_{i,1}-u^{k-1}u_{i,k})
= \gcd(q, m^{k}a_ia, m^{2}b_ib)
\ll \gcd(q,m^{k}),
\end{equation}
we obtain
\begin{equation}
    \label{eq:Si-bound}
    S_i(q; a,b)
    \ll \gcd(q,m^{k})^{1/k} q^{1 - 1/k + \epsilon}
    \leq mq^{1 - 1/k + \epsilon}
    \leq Mq^{1 - 1/k + \epsilon}
\end{equation}
for $1\leq i \leq s$.
We require more precision in the case $q = p^{t}$: here \cite[Equation (7.9)]{Vau1997} yields
\begin{equation}
    \label{eq:Si-bound-prime-pow}
    S_i(p^{t}; a, b) \ll d \tilde q ^{1- 1/k}= d^{1/k}p^{t-t/k}.
\end{equation}

For $q\in\N$ and $a,b\in\Z$, write
\begin{equation}
    \label{eq:S-def}
    S(q; a,b) = \prod_{i\leq s}S_i(q; a,b),
    \qquad
    S(q) = q^{-s}\sum_{\substack{a,b\leq q\\\gcd(q,a,b) = 1}}S(q;a,b).
\end{equation}
By \eqref{eq:Si-bound},
\begin{equation}
    \label{eq:S-bound}
    S(q)
    \ll M^{s}q^{2-s/k + \epsilon}.
\end{equation}

We are now equipped to extend to infinity. As the major arcs are disjoint, we have
\begin{equation}
    \int_{\Ma}F^{*}(\alpha, \beta)\d \alpha \d \beta = \sum_{q< \Delta}S(q)\int_{\abs{q\tau}<N^{\Delta -k}}\int_{\abs{q\kappa}<N^{\Delta -1}} V(\tau, \kappa)\d\kappa\d\tau.
\end{equation}
We compute using \eqref{eq:V-bound} that
\begin{equation}
    \int_{\abs{q\kappa}\geq N^{\Delta -1}} \abs{V(\tau, \kappa)}\d\kappa
    \ll \frac{N^{s - \eta}\int_{N^{\Delta -1}/q}^{\infty}\kappa^{-\eta}\d\kappa}{(1+N^{k}\abs\tau)^{s/k - \eta}}
    \ll \frac{q^{\eta-1}N^{s-1-\Delta(\eta -1)}}{(1+N^{k}\abs\tau)^{s/k - \eta}}.
\end{equation}
Applying \eqref{eq:S-bound} and choosing $\eta = 1 + \epsilon$ now gives
\begin{align}
    \sum_{q\geq 1} \abs{S(q)} \int_{\R}\int_{\abs{q\kappa}\geq N^{\Delta -1}}\abs{V(\tau,\kappa)}\d\kappa\d\tau
    &\ll N^{-k}N^{s-1- \Delta \epsilon}M^{s}\sum_{q\geq 1}q^{2-s/k + \epsilon} q^{\epsilon}
    \\
    &\ll N^{s-k-1 - \Delta \epsilon} M^{s}
    = o(\#\mathcal P^{s-2}/R^{k-1}),
\end{align}
since $s\geq 3k + 1$. Choosing $\eta = 1 - s/k + \epsilon$ instead, we similarly compute
\begin{align*}
\sum_{q\geq 1} \abs{S(q)} \int_{\abs{q\tau}\geq N^{\Delta -2}}
\int_{\R}\abs{V(\tau,\kappa)}\d\kappa\d\tau
&\ll N^{-1}N^{s-k-(\Delta + k -2)\epsilon}M^{s}
\\
&= o(\#\mathcal P^{s-2}/R^{k-1}).
\end{align*}
Finally,
\begin{align*}
\sum_{q\geq N^\Delta} \abs{S(q)}\int_{\R^{2}}\abs{V(\tau, \kappa)}\d\kappa\d\tau
&\ll N^{s-k-1} M^{s}\sum_{q\geq N^\Delta} q^{2-s/k + \epsilon}
    \\
&\ll N^{s-k-1} M^{s}N^{\Delta (3-s/k + \epsilon)}
    = o(\#\mathcal P^{s-2}/R^{k-1}).
\end{align*}
In view of \eqref{eq:major-arc-result}, we now have
\begin{equation}
    T(\indicator{\mathcal P}) = \mathfrak S \mathfrak J_0 + o(\#\mathcal P^{s-2}/R^{k-1}),
\end{equation}
where
\begin{equation}
    \mathfrak S = \sum_{q\geq 1} S(q),
    \qquad
    \mathfrak J_0 = \int_{\R^{2}}V(\tau, \kappa) \d \kappa\d\tau.
\end{equation}

Define
\begin{equation}
    \mathfrak J = \int_{\R^{2}}\int_B e(\tau \F(\vec x) + \kappa\L(\vec x))\d{\vec x}\d\kappa\d\tau,
    \qquad
    B = [L/R, 1]^{s}.
\end{equation}
As $\mathfrak J_0 = R^{s-k-1}m^{-s}\mathfrak J$, we at last have
\begin{equation}
    \label{eq:cm-result}
    T(\indicator{\mathcal P}) = R^{s-k-1} m^{-s}\mathfrak S \mathfrak J + o(\#\mathcal P ^{s-2}/R^{k-1}).
\end{equation}

\subsection{Singular integral}
Here we show that
\begin{equation}
    \label{eq:singular-integral-objective-A}
    \mathfrak J
    \gg \biggl(1-\frac{L}{R}\biggr)^{s-2}
\end{equation}
in Case \ref{case:neq}, and
\begin{equation}
    \label{eq:singular-integral-objective-B}
    \mathfrak J
    \gg \biggl(1-\frac{L}{R}\biggr)^{s-3}
\end{equation}
in Case \ref{case:eq}.

For $\eta \in \R$ and $T > 0$, define $ \lambda(\eta) = \max\{1-\abs \eta , 0\}$ and $\lambda_{T}(\eta) = T\lambda(T\eta)$.
For $(\eta_k, \eta_1)\in \R^{2}$ and $T>0$, define
\begin{equation}
\lambda_T(\eta_k, \eta_1) = \lambda_T(\eta_k)\lambda_T(\eta_1)
\end{equation}
and
\begin{equation}
    \mu_T = \int_{B}\lambda_T(\F(\vec x), \L(\vec x)) \d{\vec x}.
\end{equation}

Thanks to \cite[\S 3]{Sch1985}, we know that
\begin{equation}
    \mathfrak J = \lim_{T\to \infty}\mu_T.
\end{equation}
Thus, it suffices to prove that if $T$ is large, then the volume of the region
\begin{equation}
    \mathcal R \defeq \{ \vec x \in B: \abs{\F(\vec x)}, \abs{\L(\vec x)} \leq (2T)^{-1} \}
\end{equation}
satisfies
\begin{equation}
    \vol(\mathcal R) \gg T^{-2}\biggl( 1-\frac{L}{R} \biggr)^{s-2}
\end{equation}
in Case \ref{case:neq} and
\begin{equation}
    \vol(\mathcal R) \gg T^{-2}\biggl( 1-\frac{L}{R} \biggr)^{s-3}
\end{equation}
in Case \ref{case:eq}.

\bigskip

In Case \ref{case:neq}, the variety $\mathcal V$ contains the smooth real point
\begin{equation}
    \vec 1 = (1,\dots,1)\in\R^{s}.
\end{equation}
Observe that $J(\vec 1)$ is invertible, where
\begin{equation}
    J(\vec x) =
    \begin{pmatrix}
        ka_{s-1}x_{s-1}^{k-1} & ka_{s}x_{s}^{k-1} \\
        b_{s-1} & b_{s}
    \end{pmatrix}.
\end{equation}
Let $r=r(\mathcal V)$ be a small positive constant. The implicit function theorem furnishes a $C^{1}$ function $g:(1-r, 1+r)^{s-2}\to \R^{2}$ such that
\begin{equation}
    \{ (x_1,\dots,x_{s-2}, g(x_1,\dots,x_{s-2})) \} \subseteq \mathcal V,
\end{equation}
and
\begin{equation}
\partial_i g(x_1,\dots,x_{s-2}) = -J^{-1}\cdot (\partial_i\F, \partial_i\L)^{T}\Big\rvert_{(x_1,\dots,x_{s-2}, g(x_1,\dots,x_{s-2}))}
\end{equation}
for $1\leq i \leq s-2$.
In particular, we can bound the norm of the gradient of $g$ as follows:
\begin{equation}
    \norm{\nabla g(x_1,\dots,x_{s-2})}_{\infty}
    \ll \norm{J(\vec x)^{-1}}_{\infty}
    \ll \norm{J(\vec 1)^{-1}}_{\infty}
    \ll 1,
\end{equation}
for $r$ small enough,
where $\vec x = (x_1,\dots,x_{s-2}, g(x_1,\dots,x_{s-2}))$.

Now put $t=(1+\frac{L}{R})/2\in[1/2, 1-(2M)^{-1}]\subset [1/2, 1]$, and define
\begin{equation}
    \tilde g: ((1-r)t, (1+r)t)^{s-2}\to \R^{2},
    \qquad
    \tilde g(\cdot) = tg(\cdot/t).
\end{equation}
By homogeneity, we have
\begin{equation}
    \{ (x_1,\dots,x_{s-2},\tilde g(x_1,\dots,x_{s-2})) \} \subseteq \mathcal V(\R),
\end{equation}
and by the chain rule, $\norm{\nabla \tilde g}_{\infty} \ll 1$.
Finally, as $\F$ and $\L$ are uniformly Lipschitz on $B$, it follows from the mean value theorem that $\mathcal R$ contains the set of $\vec x\in \R^{s}$ for which
\begin{gather}
    \abs{x_i -t}\leq r(1-L/R) \qquad (1\leq i\leq s-2),
    \\
    \norm{(x_{s-1},x_{s})-\tilde g(x_1, \dots, x_{s-2})}_{\infty}\leq r/T.
\end{gather}
Therefore
\begin{equation}
    \vol(\mathcal R) \gg T^{-2}(1-L/R)^{s-2},
\end{equation}
completing the proof of \eqref{eq:singular-integral-objective-A} in Case \ref{case:neq}.

\bigskip

In Case \ref{case:eq}, let $\vec y'$ be a fixed smooth real point on $\mathcal V'$, normalised so that $\norm{\vec y'}_\infty = 1$. By symmetry, we may assume that $y'_{s-1} \neq y'_s$.
Let $r = r(\mathcal V, \vec y')$ be a small positive constant, and let
\begin{equation}
    \vec y = \lambda \vec y' + \mu \vec 1 \in [1/2, 1]^{s},
\end{equation}
where
\begin{equation}
    \lambda = r(1-L/R),
    \qquad \mu = \frac{1 + L/R}{2}.
\end{equation}
It follows from translation and dilation invariance that $\vec y \in \mathcal V'(\R)$.

A direct appeal to the general theory of effective radii for the implicit function theorem \cite[Theorem 3.1]{Pap2005} is insufficient for our purposes. Instead, we substitute one variable using the linear equation, and infer the implicit function using the intermediate value theorem. This substitution improves a certain Lipschitz constant close to $\vec y$, giving rise to an effective radius exceeding that available in greater generality.

If $\vec x \in \mathcal V(\R)$ then
\begin{equation}
    \label{eq:xs-as-dependent}
    x_s = -a_s^{-1}(a_1x_1 + \dots + a_{s-1}x_{s-1}),
\end{equation}
and so
\begin{align*}
    0 &= F(x_1,\dots,x_{s-1})
    \\
    &\defeq a_1 x_1^{k} + \dots + a_{s-1}x_{s-1}^{k} +(-1)^{k}a_{s}^{1-k}(a_1x_1 + \dots + a_{s-1}x_{s-1})^{k}.
\end{align*}
With \eqref{eq:xs-as-dependent}, observe that
\begin{equation}
    \partial_{s-1}F(x_1,\dots,x_{s-1}) = ka_{s-1}(x_{s-1}^{k-1} - x_{s}^{k-1})
\end{equation}
has constant sign on the region $\mathcal X$ defined by
\begin{equation}
    \abs{x_i -y_i} < r\lambda \qquad (1\leq i \leq s-1),
\end{equation}
and
\begin{equation}
    \abs{\partial_{s-1}F(x_1,\dots,x_{s-1})} \asymp \abs{x_{s-1} - x_s} \asymp \lambda.
\end{equation}
Thus, the intermediate value theorem uniquely determines a function
\begin{equation}
    g:\prod_{i\leq s-2}(y_i-r^{2}\lambda, y_i + r^{2}\lambda) \to \R^{2}
\end{equation}
whose graph lies in $\mathcal V(\R)$. It follows from the mean value theorem that $\operatorname{Lip}(F) \ll \lambda$ on $\mathcal X$.

Let
\begin{equation}
    (x_1, \dots, x_{s-2}) \in \prod_{i\leq s-2}(y_i-r^{3}\lambda, y_i + r^{3}\lambda).
\end{equation}
Then
\begin{equation}
    \abs{F(x_1, \dots, x_{s-2}, y_{s-1})} \leq r^{2}\lambda^{2},
\end{equation}
so
\begin{equation}
    \norm{g(x_1,\dots,x_{s-2}) - (y_{s-1}, y_s)}_{\infty} \leq r\lambda.
\end{equation}
Put
\begin{equation}
    (z_{s-1}, z_s) = g(x_1,\dots,x_{s-2}),
\end{equation}
and suppose
\begin{equation}
    \abs{x_{s-1}-z_{s-1}} \leq r^{2}/(\lambda T).
\end{equation}
Then
\begin{equation}
    \abs{F(x_1, \dots., x_{s-1})} \leq r/T.
\end{equation}
Next, put
\begin{equation}
    w = -a_s^{-1}(a_1x_1 + \dots + a_{s-1}x_{s-1}),
\end{equation}
so that
\begin{equation}
    \L(x_1,\dots, x_{s-1}, w) = 0
\end{equation}
and
\begin{equation}
    \F(x_1,\dots,x_{s-1}, w) = F(x_1,\ldots,x_{s-1}) \in [ -r/T, r/T ].
\end{equation}
Finally, if
\begin{equation}
    \abs{x_s -w} \leq r/T,
\end{equation}
then $\vec x \in \mathcal R$. We conclude that
\begin{equation}
    \vol(\mathcal R) \gg T^{-2}\lambda^{s-3}\gg T^{-2}(1-L/R)^{s-3},
\end{equation}
which completes the proof of \eqref{eq:singular-integral-objective-B} in Case \ref{case:eq}.

\subsection{Singular series}

Here we show that
\begin{equation}
    \label{eq:singular-series-objective-A}
    \mathfrak S \gg m^{2}
\end{equation}
in Case \ref{case:neq}, and that
\begin{equation}
    \label{eq:singular-series-objective-B}
    \mathfrak S \gg m^{3}
\end{equation}
in Case \ref{case:eq}. These are the final ingredients needed for \eqref{T1a} in Case \ref{case:neq} and \eqref{T1b} in Case \ref{case:eq}. In Case \ref{case:neq}, for instance, the inequalities \eqref{eq:singular-integral-objective-A} and \eqref{eq:singular-series-objective-A} give
\begin{equation}
    m^{-s}\mathfrak S \mathfrak J \gg m^{2-s}(1-L/R)^{s-2}\gg R^{2-s}\# \mathcal P^{s-2},
\end{equation}
and substituting this into \eqref{eq:cm-result} yields \eqref{T1a} when $g=1$.

Put $\vec u = (u, \dots, u)$.
Note that
\begin{equation}
    \label{eq:Si-as-exp-sum}
    S_i(q; a, b) = \sum_{x\leq q} e_q(aa_i(mx+u)^{k} + bb_i(mx+u)),
\end{equation}
and thus
\begin{equation}
\label{SAlt}
    S(q) = q^{-s}\sum_{\substack{a,b\leq q\\ \gcd(q,a,b) = 1}}\sum_{x_1,..,x_s\leq q}e_q(a\F(m\vec x + \vec u) + b\L(m\vec x + \vec u)).
\end{equation}
For $q\in\N$, denote by $M(q)$ the number of solutions $x_1, \dots, x_s\in [q]$ to
\begin{equation}
    \F(m\vec x + \vec u) \equiv \L(m\vec x + \vec u) \equiv 0 \mod q.
\end{equation}

\begin{lem}
    \label{lm:S-from-sol-count}
    For $q\in\N$, 
    \begin{equation}
        \sum_{d\divs q}S(d) = q^{2-s}M(q).
    \end{equation}
\end{lem}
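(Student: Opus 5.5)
The plan is to start from the definition of $M(q)$ and detect the two congruences by orthogonality. For $\vec x \in [q]^s$ we have
\[
1_{\F(m\vec x + \vec u) \equiv \L(m\vec x + \vec u) \equiv 0 \mmod q}
= q^{-2} \sum_{a,b \le q} e_q\bigl(a\F(m\vec x+\vec u) + b\L(m\vec x+\vec u)\bigr),
\]
so that
\[
q^{2-s} M(q) = q^{-s} \sum_{a,b\le q} \sum_{\vec x \in [q]^s} e_q\bigl(a\F(m\vec x+\vec u) + b\L(m\vec x+\vec u)\bigr).
\]

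Next, we partition the pairs $(a,b) \in [q]^2$ according to $\gcd(q,a,b) = q/d$, where $d$ runs over the divisors of $q$. For such a pair we write $a = (q/d)a'$ and $b = (q/d)b'$ with $a',b' \le d$ and $\gcd(d,a',b')=1$. This gives a bijection between the pairs in the class indexed by $d$ and the pairs $(a',b') \in [d]^2$ with $\gcd(d,a',b')=1$. For these pairs the phase becomes $e_d\bigl(a'\F(m\vec x+\vec u) + b'\L(m\vec x+\vec u)\bigr)$.

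This phase depends only on $\vec x$ modulo $d$, since $\F(m\vec x+\vec u)$ and $\L(m\vec x+\vec u)$ are integer polynomials in $\vec x$. Hence the sum over $\vec x \in [q]^s$ equals $(q/d)^s$ times the corresponding sum over $\vec x \in [d]^s$. Collecting terms, the contribution from the class indexed by $d$ is
\[
q^{-s}(q/d)^s \sum_{\substack{a',b'\le d\\ \gcd(d,a',b')=1}} \sum_{\vec x\in[d]^s} e_d(\cdots) = S(d),
\]
by the alternative expression \eqref{SAlt} for $S(d)$. Summing over $d \divs q$ yields the claim.

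No step is a real obstacle. The only points needing care are that the map $(a,b)\mapsto(d,a',b')$ is a bijection onto the stated index set, and that the periodicity in $\vec x$ is modulo $d$ rather than $q$. Both are routine.
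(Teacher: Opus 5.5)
Your proposal is correct and is essentially the paper's proof: both detect the two congruences with additive characters, sort the pairs $(a,b)$ by $\gcd(q,a,b)=q/d$, and use periodicity of the phase modulo $d$ to extract the factor $(q/d)^s$, which recovers $S(d)$ from its exponential-sum expression. The only difference is that you write out the bijection $(a,b)\leftrightarrow(a',b')$, which the paper leaves implicit.
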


\begin{proof}
By orthogonality, 
\begin{align*}
&M(q)
= q^{-2}\sum_{v, w\leq q} \ \sum_{x_1,\dots,x_s \leq q}e_q(v\F(m\vec x + \vec u) + w\L(m\vec x + \vec u))
\\
&= q^{-2} \sum_{d \divs q} \sum_{\substack{a,b \leq d \\ \gcd(d, a, b)=1}}
(q/d)^{s} \sum_{x_1,\dots,x_s \leq d} e_d(a\F(m\vec x + \vec u) + b\L(m\vec x + \vec u))
\\
&= q^{s-2}\sum_{d\divs q}S(d).
\end{align*}
\end{proof}

Observe using the Chinese remainder theorem that $M(\cdot)$ is a multiplicative arithmetic function. The M\"obius inversion formula now implies that $S(\cdot)$ is also multiplicative.
The series defining $\mathfrak S$ converges absolutely by \eqref{eq:S-bound}, therefore there is an absolutely convergent product representation
\begin{equation}
    \mathfrak S = \prod_{p}\chi_p,
\end{equation}
where
\begin{equation}
    \chi_p = \sum _{t\geq 0} S(p^{t}).
\end{equation}
In light of Lemma \ref{lm:S-from-sol-count}, we can interpret $\chi_p$ as the $p$-adic density of points on the variety, as follows.

\begin{cor}
    If $p$ is prime then
    \begin{equation}
        \chi_p = \lim_{t\to \infty}p^{-t(s-2)}M(p^{t}).
    \end{equation}
\end{cor}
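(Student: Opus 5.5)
The plan is to apply Lemma \ref{lm:S-from-sol-count} with $q = p^t$. Its divisors are exactly $1, p, \ldots, p^t$, so
\[
\sum_{j=0}^{t} S(p^j) = p^{t(2-s)} M(p^t).
\]
The left-hand side is the $t$-th partial sum of the series defining $\chi_p$. It therefore suffices to show that this series converges, after which we let $t \to \infty$.

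Convergence of $\sum_{t \ge 0} S(p^t)$ follows from \eqref{eq:S-bound}. That bound gives
\[
S(p^t) \ll M^s p^{t(2 - s/k + \eps)}.
\]
Since $s \ge s_0 \ge 3k+1$, the exponent $2 - s/k + \eps$ is negative, so the series converges absolutely, even geometrically in $t$. Indeed, absolute convergence of $\sum_q S(q)$ was already noted when deriving the Euler product, and the $p$-power terms form a subseries of it.

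Hence the partial sums converge to $\chi_p$, and so $p^{-t(s-2)} M(p^t) \to \chi_p$, which is the claim.

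There is essentially no obstacle here. The only points needing care are that the divisor sum in Lemma \ref{lm:S-from-sol-count} runs over all of $1, p, \ldots, p^t$, and that \eqref{eq:S-bound} holds uniformly in $q$. This uniformity makes the convergence legitimate even though the implied constant depends on $M$, which is fixed.
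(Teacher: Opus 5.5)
Your proposal is correct and matches the paper's argument. Lemma \ref{lm:S-from-sol-count} with $q = p^t$ identifies $p^{-t(s-2)}M(p^t)$ with the partial sum $\sum_{j \le t} S(p^j)$, and the absolute convergence from \eqref{eq:S-bound} (valid since $s \ge 3k+1$) lets you pass to the limit; the paper states this as an immediate consequence of the lemma without writing out these details.
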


Let $p$ be prime, and suppose that $p^{r}\edivs m$, where $r\geq 0$. For the purpose of estimating $\chi_p$, we may assume that $m = p^{r}$ and $u=1$.
Let $C$ be a large positive constant depending only on $\mathcal V$.

\subsubsection{Case \ref{case:neq}}
\textbf{Case A1:} $p\leq C$.
For $t\in\N$ large, define
\begin{equation}
    \F'(\vec x) =
    \sum_{i\leq s}a_i\sum_{j=1}^{k}\binom k j p^{r(j-1)}x_i^{j}
    = p^{-r}\F(p^{r}\vec x + \vec 1).
\end{equation}
Observe that $M(p^{t})$ is $p^{rs}$ times the number of solutions $\vec x \in(\Z/p^{t-r}\Z)^{s}$ to the system
\begin{equation}
    \label{eq:system-lower-pow}
    \F'(\vec x) \equiv \L(\vec x) \equiv 0 \mod p^{t-r}.
\end{equation}
Writing $\nu_p$ for $p$-adic order, note that
\begin{equation}
    \delta \defeq\nu_p(\det(J(\vec 0))) = \nu_p(k(a_{s-1}b_s -a_sb_{s-1})) < \infty,
\end{equation}
where
\begin{equation}
    J = 
    \begin{pmatrix}
        \partial_{s-1}\F' & \partial_{s}\F'\\
        \partial_{s-1}\L & \partial_{s}\L
    \end{pmatrix}.
\end{equation}
Take $x_1,\dots,x_{s-2}\in \Z/p^{t-r}\Z$ such that
\begin{equation}
    x_i \equiv 0 \mod p^{2 \delta + 1}\qquad (1\leq i \leq s-2).
\end{equation}
Since $\F'(\vec 0) \equiv \L(\vec 0) \mod p^{2 \delta + 1}$,
Hensel's lemma \cite[Proposition 5.20]{Gre1969} yields $(x_{s-1}, x_s) \in (\Z/p^{t-r}\Z)^{2}$
such that $\vec x = (x_1,\dots,x_s)$ is a solution to \eqref{eq:system-lower-pow}.

There are $p^{(t-r-2 \delta -1)(s-2)}$ many choices for $x_1,\dots,x_{s-2}$, and thus
\begin{equation}
    M(p^{t}) \geq p^{(t-r-2 \delta -1)(s-2) + rs}.
\end{equation}
Therefore,
\begin{equation}
    \chi_p\geq p^{2r - (2 \delta +1)(s-2)}\gg p^{2r}.
\end{equation}

\textbf{Case A2:} $p> C$.
By \eqref{SAlt},
\begin{equation}
    \label{eq:S-bound-up-to-r}
    S(0) = 1,
    \qquad
    S(p^{t}) = p^{2t} - p^{2(t-1)} \quad (1\leq t \leq r).
\end{equation}

We now consider $t > r$. By \eqref{eq:u_ij-definition} and \eqref{eq:Si-bound-prime-pow},
\begin{equation}
    \label{eq:Si-gcd-bound-at-power-p}
    S_i(p^{t}; a, b)
    \ll \gcd(p^{t}, u_{i, 1})^{1/k}p^{t- t/k}
    = \gcd(p^{t-r}, ka_ia+b_ib)^{1/k}p^{t- (t-r)/k},
\end{equation}
and therefore,
\begin{equation}
    \label{eq:S-gcd-bound-at-power-p}
    S(p^{t}) \ll \sum_{\substack{a,b\leq p^{t}\\ p\ndivs\gcd(a,b)}}\prod_{i \leq s}\left(\frac{\gcd(p^{t-r}, ka_ia + b_ib)}{p^{t-r}}\right)^{1/k}.
\end{equation}
Suppose that $p\ndivs \gcd(a, b)$, and for some $i\neq j$,
\begin{equation}
    \label{eq:more-than-one-multiple-of-p}
    ka_ia +b_ib \equiv ka_ja + b_jb \equiv 0 \mod p.
\end{equation}
Then
\begin{align}
    ka(a_ib_j -a_jb_i)\equiv b(b_jb_i-b_ib_j)\equiv 0 \mod p,
    \\
    b(b_ia_j-b_ja_i)\equiv ka(a_ja_i-a_ia_j)\equiv 0 \mod p.
\end{align}
Since $p\ndivs \gcd(a,b)$, we must have $p\divs k(a_ib_j-a_jb_i)$, contradicting $p> C$.
It follows that $p\divs ka_ia + b_ib$ for at most one value of $i$, and thus by \eqref{eq:S-gcd-bound-at-power-p},
\begin{align}
S(p^{t})
&\ll p^{2t}\cdot p^{-s(t-r)/k} + p^{-(s-1)(t-r)/k}
    \#\bigl\{a, b\leq p^{t}: p\divs\prod_{i\leq s}(ka_ia+b_ib) \bigr\} \\
\label{eq:S-bound-at-large-power-p}
&\ll p^{2t}\cdot p^{-s(t-r)/k} + p^{2t-1}\cdot p^{-(s-1)(t-r)/k}.
\end{align}
Now
\begin{equation}
\label{eq:sum-S-large-t}
\sum_{t>r} S(p^{t}) \ll p^{2r + 2 -s/k}
\leq p^{2r -3/2},
\end{equation}
whence $\chi_p \geq p^{2r}(1 + O(p^{-3/2}))$.

Finally, since $\prod_{p>C} (1+O(p^{-3/2})) \gg 1$, the combination of both cases delivers \eqref{eq:singular-series-objective-A} in Case \ref{case:neq}.

\subsubsection{Case \ref{case:eq}}
\textbf{Case B1:} $p\leq C$.

\underline{Case B1 (i): $r \leq C$.}
Let $\vec y' \in \mathcal V'(\Z_p)$ be a smooth point. By translation invariance we may take $y'_s=0$. By dilation invariance, relabelling if necessary, we may further assume that $p\ndivs y'_{s-1}$.
Let $t\in\N$ be large, and observe that
\begin{equation}
    M(p^{t}) = p^{rs}\#\{ \vec z \in [p^{t}]^{s}: \vec z \equiv \vec 1 \mod p^{r},\, \F(\vec z)\equiv \L(\vec z) \equiv 0 \mod p^{t} \}.
\end{equation}
Define
\begin{equation}
    \vec y = p^{C}\vec y' + \vec 1,
    \qquad
    J =
    \begin{pmatrix}
        \partial_{s-1}\F & \partial_{s}\F\\
        \partial_{s-1}\L & \partial_{s}\L
    \end{pmatrix}.
\end{equation}
By translation and dilation invariance, we have $\F(\vec y) = \L(\vec y) = 0 \in \Z_p$. Let
\begin{equation}
    \delta \defeq \nu_p(\det(J(\vec y)))
    = \nu_p(ka_{s-1}a_s(y_{s-1}^{k-1}-y_s^{k-1}))
    = \nu_p(k(k-1)a_{s-1}a_s p^{C}),
\end{equation}
and choose $x_1,\dots,x_{s-2}\in\Z/p^{t}\Z$ such that
\begin{equation}
    x_i \equiv y_i \mod p^{2 \delta + 1} \qquad (1\leq i \leq s-2).
\end{equation}
Then Hensel's lemma \cite[Proposition 5.20]{Gre1969} yields $(x_{s-1}, x_s) \in (\Z/p^{t}\Z)^{2}$ such that $\vec x = (x_1,\dots,x_s)$ satisfies
\begin{equation}
    \vec x \equiv \vec y \equiv \vec 1 \mod p^{r},
    \qquad
    \F(\vec x) \equiv \L(\vec x) \equiv 0 \mod p^{t}.
\end{equation}
Therefore,
\begin{equation}
    M(p^{t}) \geq p^{rs + (t-2 \delta -1)(s-2)}.
\end{equation}
As $p,r \leq C$, we thus have
\begin{equation}
    \chi_p \gg p^{rs - 2r(s-2)} \gg 1 \gg p^{3r}.
\end{equation}

\underline{Case B1 (ii): $r > C$.}
Write
\begin{equation}
    \mathfrak Q (\vec x) = a_1x_1^{2} + \dots + a_s x_s^{2}.
\end{equation}
Observe that $M(p^{3r})$ is the number of solutions $\vec x \in (\Z/p^{3r}\Z)^{s}$ to the system
\begin{equation}
    \label{eq:system-at-3r}
    p^{2r}\frac{1}{2}k(k-1)\mathfrak Q(\vec x) \equiv p^{r}\L(\vec x) \equiv 0 \mod p^{3r}.
\end{equation}
Moreover, we have
\begin{equation}
    \label{eq:lower-bound-M-from-M'}
    M(p^{3r}) \geq p^{r(2s-1)} M'(p^{r}),
\end{equation}
where $M'(p^{r})$ is the number of solutions $\vec y \in [p^r]^{s}$ to
\begin{equation}
\label{eq:quadratic-system-mod-pr}
    \mathfrak Q(\vec y) \equiv \L(\vec y) \equiv 0 \mod p^{r}.
\end{equation}
Indeed, given such a vector $\vec y$, consider
\begin{equation}
    \vec x = \vec y + p^{r}\vec y' + p^{2r}\vec y'',
\end{equation}
where $\vec y',\vec y'' \in [p^r]^{s}$ with
\begin{equation}
    \L(\vec y') \equiv -p^{-r} \L(\vec y)\mod p^{r}.
\end{equation}
A short computation shows that $\vec x$ is a solution to \eqref{eq:system-at-3r}. There are $p^{rs}$ many choices for $\vec y''$, and since $p\ndivs \gcd(a_1,\dots,a_s)$, there are $p^{r(s-1)}$ many choices for $\vec y'$, yielding the lower bound \eqref{eq:lower-bound-M-from-M'}.

We now 
estimate $M'(p^{r})$.
Let $\vec y\in\mathcal V'(\Z_p)$ be a smooth point where, similarly to before, we assume that $p$ does not divide $y_{s-1}-y_s$.
Write
\begin{equation}
    J = 
    \begin{pmatrix}
        \partial_{s-1}\mathfrak Q & \partial_{s}\mathfrak Q\\
        \partial_{s-1}\L & \partial_{s}\L
    \end{pmatrix},
\end{equation}
and define
\begin{equation}
    \delta = \nu_p(\det(J(\vec y)))
    = \nu_p(2a_{s-1}a_s( y_{s-1}- y_s))
    = \nu_p(2a_{s-1}a_s).
\end{equation}
Choose $x_1,\dots,x_{s-2}\in \Z/p^{r}\Z$ such that
\begin{equation}
    x_i \equiv y_i \mod p^{2 \delta + 1}.
\end{equation}
Hensel's lemma \cite[Proposition 5.20]{Gre1969} furnishes $(x_{s-1}, x_s) \in (\Z/p^{r}\Z)^{2}$ such that $\vec x = (x_1,\dots,x_s)$ is a solution to \eqref{eq:quadratic-system-mod-pr}.
We deduce that
\begin{equation}
    \label{eq:M'-bound-with-delta}
    M'(p^{r}) \geq p^{(r-2 \delta -1)(s-2)}.
\end{equation}
Since $\delta$ can be bounded above uniformly in $p$, we may choose a constant $C$ satisfying $C > 2(2 \delta + 1)(s-2)$. Then the combination of \eqref{eq:M'-bound-with-delta} and \eqref{eq:lower-bound-M-from-M'} gives
\begin{equation}
    M(p^{3r}) \geq p^{3r(s-1) - C/2},
\end{equation}
and by Lemma \ref{lm:S-from-sol-count},
\begin{equation}
    \label{eq:B1-below-3r}
    \sum_{t=0}^{ 3r} S(p^{t}) \geq p^{3r - C/2}.
\end{equation}

We next consider the sum over $t>3r$.
From \eqref{eq:Si-bound-prime-pow}, for $i\in [s]$ and $p\ndivs \gcd(a,b)$ we have
\begin{align}
    p^{-t}S_i(p^{t}; a, b)) &\ll p^{-t/k}  \gcd\bigl(p^{t},\, p^{2r}a_ia k(k-1)/2,\, p^{r}a_i(ka+b)\bigr)^{1/k}
    \\
    &\ll p^{(r -t)/k}\gcd(p^{r},\, ka+b)^{1/k},
\end{align}
where the implicit constant is uniform in $C$. Therefore,
\begin{equation}
    S(p^{t}) 
    \ll p^{(r-t)s/k}\sum_{a,b\leq p^{t}}\gcd(p^{r},\, ka+b)^{s/k}.
\end{equation}
Splitting the sum according to the $p$-adic order of $ka + b$, we get
\begin{align}
    S(p^{t})
&\ll p^{(r-t)s/k}\sum_{v = 0}^{r}p^{2t-v}p^{vs/k}\\
&\ll p^{(r-t)s/k}  p^{2t-r}p^{rs/k} = p^{3r +(t-2r)(2-s/k)},
\label{eq:S-bound-case-b1-above-3r}
\end{align}
and so
\begin{equation}
\sum_{t>3r} S(p^{t}) 
\ll p^{3r + r(2-s/k)} < p^{3r -3C/2}.
\end{equation}
Recall that the implicit constant is uniform in $C$, and thus by choosing $C$ large enough, we finally conclude
\begin{equation}
\abs[\Big]{\sum_{t>3r} S(p^{t})} < p^{3r-C}.
\end{equation}
Combined with \eqref{eq:B1-below-3r}, this yields
\begin{equation}
    \chi_p \gg p^{3r}
\end{equation}
in this case.

\textbf{Case B2:} $p>C$.
We again have \eqref{eq:S-bound-up-to-r}. Suppose $r<t\leq 2r$. Then
\begin{equation}
    S(p^{t}; a, b) =
    \sum_{\vec x \in [p^{t}]^{s}}e_{p^{t}}\Bigl(\sum_{i\leq s}a_ip^{r}(ka + b)x_i\Bigr)
    = \prod_{i\leq s}\sum_{x\leq p^{t}} e_{p^{t}}(a_ip^{r}(ka+b)x).
\end{equation}
As $p\ndivs \gcd(a_1,\dots,a_s)$, we have
\begin{equation}
p^{-ts} S(p^{t}; a, b) = 
\begin{cases}
1, & \text{if } p^{t-r} \divs (ka + b),
\\
0, & \text{if } p^{t-r} \ndivs (ka + b).
\end{cases}
\end{equation}
We conclude that
\begin{equation}
\label{eq:S-bound-case-B2-below-2r}
S(p^{t}) = (p^{t}-p^{t-1}) p^{r}
\end{equation}
and hence
\begin{equation}
    \sum_{t = 0}^{2r}S(p^{t}) = p^{3r}.
\end{equation}

We now consider $t>2r$. We again have \eqref{eq:S-bound-case-b1-above-3r}, hence
\begin{equation}
    \label{eq:bound-S-sum-above-2r-case-B2}
    \sum_{t>2r} S(p^{t}) \ll p^{3r + 2 - s/k} \leq p^{3r - 3/2},
\end{equation}
and so
\begin{equation}
    \chi_p = p^{3r}(1+O(p^{-3/2})).
\end{equation}

Since $\prod_{p>C}(1 + O(p^{-3/2})) \gg1$, the combination of both cases furnishes \eqref{eq:singular-series-objective-B} in Case \ref{case:eq}.

\section{An auxiliary system}
\label{S:Aux}

We retain the notation from the previous section, and denote by $T_2$ the number of tuples 
\[
    (x_1, x_2, \ldots, x_s, x'_2, \ldots, x'_s) \in \cP^{2s-1}
\]
such that $\bx, \bx' \in \cV$, where
\begin{equation}
    \label{xx'}
    \bx = (x_1,\ldots, x_s), \qquad \bx' = (x_1, x'_2, \ldots, x'_s).
\end{equation}
In this section, we show that
\begin{equation} \label{T2a}
    T_2 \ll \frac{g^{2k-2} \#\cP^{2s-5}}{R^{2k-2}}
\end{equation}
in Case \ref{case:neq}, and
\begin{equation} \label{T2b}
    T_2 \ll \frac{g^{2k-4} \#\cP^{2s-7}}{R^{2k-4}}
\end{equation}
in Case \ref{case:eq}. In particular, if we couple this with the outcome of the previous section then we obtain
\begin{equation}
    \label{upshot}
    T_2 \ll \# \cP^{-1} T(1_\cP)^2.
\end{equation}

By homogeneity, we may assume that $g=1$. For $\vec \alpha = (\alpha, \beta, \alpha', \beta')\in\T^{4}$, define
\begin{equation}
    g_1(\vec \alpha) = f_1(\alpha + \alpha', \beta + \beta'),
    \qquad
    g_i(\vec \alpha) = f_i(\alpha, \beta)f_i(\alpha', \beta') \quad (2\leq i \leq s),
\end{equation}
and write $G(\vec \alpha) = \prod_{i\leq s}g_i(\vec \alpha)$. By orthogonality, we have
\begin{equation}
    T_2 = \int_{\T^{4}}G(\vec \alpha) \d{\vec \alpha}.
\end{equation}
Define
\begin{equation}
    \Na = \{ \vec \alpha \in\T^{4}: (\alpha, \beta), (\alpha', \beta') \in \Ma \},
    \qquad \na = \T^{4}\setminus \Na.
\end{equation}

\subsection{Minor arcs}

Recall from \eqref{eq:moment-bound-minor-arc} that
\begin{equation}
    \int_{\ma}\abs{f_i(\alpha, \beta)}^{s}\d \alpha\d \beta \ll N^{s -k -1 - \frac{\Delta}{2k}}
    \qquad (2\leq i \leq s),
\end{equation}
and by Lemma \ref{lm:moment-bound},
\begin{equation}
    \int_{\T^{2}}\abs{f_1(\alpha + \alpha', \beta + \beta')}^{s}\d \alpha \d \beta
    \ll N^{s -k -1 + \epsilon}
    \qquad (\alpha', \beta') \in \T^{2}.
\end{equation}
Now by H\"older's inequality, 
\begin{equation}
    \int_{\ma} \abs{f_1(\alpha + \alpha ', \beta + \beta') f_2(\alpha, \beta) \cdots f_{s}(\alpha, \beta)} \d \alpha \d \beta
    \ll N^{s-k-1 - \frac{\Delta}{3k}}
\end{equation}
for $\alpha', \beta' \in \T$. 
H\"older's inequality and Lemma \ref{lm:moment-bound} again now yield
\begin{align}
\int_{\T^{2}}\int_{\ma} \abs{G(\vec \alpha)} \d{\vec \alpha}
&\ll N^{s-k-1- \frac{\Delta}{3k}}
    \int_{\T^{2}} \abs{f_2(\alpha', \beta') \cdots f_s(\alpha', \beta')} \d{\alpha'} \d{\beta'}
    \\
    &\ll N^{2s-2k -3  -\frac{\Delta}{4k}}.
\end{align}
By symmetry, we thus have
\begin{equation}
    \int_{\na} \abs{G(\vec \alpha)} \d{\vec \alpha}
    \ll N^{2s-2k -3  -\frac{\Delta}{4k}}
    = o(\#\mathcal P ^{2s-5}/R^{2k -2}),
\end{equation}
whence
\begin{equation}
    \label{eq:aux-minor-arc-result}
    T_2 = \int_{\Na}G(\vec \alpha) \d{\vec \alpha} + o(\#\mathcal P^{2s-5}/R^{2k-2}).
\end{equation}

\subsection{Major arcs}
Suppose that $(\alpha, \beta)\in \Ma(q; a, b)$ and $(\alpha', \beta') \in \Ma( q'; a', b')$, where 
\begin{gather}
    1\leq a, b\leq q < N^{\Delta},
    \qquad
    1\leq a', b'\leq q' < N^{\Delta},
    \\
    \gcd(q, a, b) = \gcd(q', a', b') = 1.
\end{gather}
By the triangle inequality, we have
\begin{equation}
    \abs{q''(\alpha + \alpha') -\alpha''} < N^{3 \Delta -k},
    \qquad
    \abs{q''(\beta + \beta') -\beta''} < N^{3 \Delta -1},
\end{equation}
where
\begin{equation}
    \label{eq:double-prime-notation}
    q'' = qq',
    \qquad
    a'' = q'a + qa',
    \qquad
    b'' = q'b + qb'.
\end{equation}
Consequently $(\alpha + \alpha', \beta + \beta') \in \Ma^{(3 \Delta)}$, so we may define
\begin{equation}
    g_i ^{*} (\vec \alpha)= f_i^{*}(\alpha, \beta) f_i^{*}(\alpha', \beta')
    \quad (2\leq i \leq s),
    \qquad
    g^{*}_1 (\vec \alpha)= f_1 ^{*}(\alpha + \alpha' , \beta + \beta'),
\end{equation}
and $G^{*}(\vec \alpha) = \prod_{i\leq s}g_i^{*}(\vec \alpha)$.
Using \eqref{eq:fi-major-arc-approx} and the telescoping identity \eqref{eq:telescoping-identity}, as in \eqref{eq:major-arc-pointwise-bound}, we get
\begin{equation}
    G(\vec \alpha) - G^{*}(\vec \alpha)
    \ll (q'')^{1+ \epsilon}N^{2s-2}
    \ll N^{2s-2 + 3 \Delta}
\end{equation}
for $\vec \alpha \in \Na$. Now
\begin{equation}
    \int_{\Na}G(\vec \alpha) \d{\vec \alpha} -   \int_{\Na}G^{*}(\vec \alpha) \d{\vec \alpha}
    \ll N^{2s-2 + 3 \Delta}N^{6 \Delta -2k -2}
    = o(\#\mathcal P^{2s-5}/R^{2k -2}).
\end{equation}
Together with \eqref{eq:aux-minor-arc-result}, this gives
\begin{equation}
    T_2 = \int_{\Na}G^{*}(\vec \alpha)\d{\vec \alpha} + o(\#\mathcal P^{2s-5}/R^{2k-2}).
\end{equation}

Define
\begin{equation}
    \label{eq:S-star-long-def}
    S^{*}(q, q'; a, b, a', b') = S_1(q ''; a '', b '')\prod_{i=2}^{s}S_i(q; a, b)S_i(q'; a', b')
\end{equation}
and
\begin{equation}
    S^{*}(q, q') = (qq')^{-s}
    \sum_{\substack{a , b \leq q \\ \gcd(q ,a , b )=1}} \
    \sum_{\substack{a',  b'\leq q'\\ \gcd(q',a',  b')=1}}
    S^{*}(q, q'; a, b, a', b').
\end{equation}
Write $\vec \tau = (\tau, \kappa, \tau' , \kappa')$, where
\begin{equation}
    \tau  = \alpha  -a /q ,
    \qquad
    \kappa  = \beta  -b /q ,
    \qquad
    \tau' = \alpha' -a'/q',
    \qquad
    \kappa' = \beta' -b'/q'.
\end{equation}
Periodicity gives
\begin{equation}
    f_1^{*}(\alpha+ \alpha', \beta + \beta') = (q '')^{-1}S_1(q ''; a '', b '')v_1(\tau + \tau', \kappa + \kappa'),
\end{equation}
wherein we recall \eqref{eq:vi-def}. Therefore
\begin{equation}
    G^{*}(\vec \alpha) = (qq')^{-s}S^{*}(q, q'; a, b, a', b')V^{*}(\vec \tau),
\end{equation}
where
\begin{equation}
    V^{*}(\vec \tau)
    = v_1(\tau + \tau', \kappa + \kappa')\prod_{i=2}^{s}v_i(\tau, \kappa)v_i(\tau', \kappa').
\end{equation}

We now bound $S^{*}(q, q'; a, b, a', b')$. By \eqref{eq:Si-bound-full-gcd} and \eqref{eq:gcd-bound},
\begin{align}
    S_1(q''; a'', b'') &\ll \gcd(q'', m^{k}a_1a '', m^{2}b_1b '')^{1/k}(qq')^{1-1/k + \epsilon}
    \\
    &\ll m\gcd(q'', a'', b'')^{1/k} (qq')^{1-1/k + \epsilon}.
\end{align}
Moreover,
\begin{align}
    \gcd(q'', a'', b'')
    &= \gcd(qq', q'a+qa', q'b + qb')
    \\
    &\leq \gcd(q, q'a, q'b) \gcd(q', qa', qb')
    = \gcd(q, q')^{2},
\end{align}
and therefore
\begin{equation}
    \label{eq:S1-bound}
    S_1(q''; a'', b'') \ll m \gcd(q, q')^{2/k} (qq')^{1-1/k + \epsilon}.
\end{equation}
Now by \eqref{eq:Si-bound},
\begin{equation}
    \label{eq:long-S-bound}
    (qq')^{-s}S^{*}(q, q'; a, b, a', b')
    \ll m^{2s-1}\gcd(q , q')^{2/k}(qq')^{-s/k + \epsilon}.
\end{equation}

It follows from \cite[Theorem 7.3]{Vau1997} that
\begin{equation}
    V^{*}(\vec \tau)
    \ll (R/m)^{2s-1}
    (1 + R\abs{\kappa} + R^{k}\abs{\tau})^{(1-s)/k}
    (1 + R\abs{\kappa'} + R^{k}\abs{\tau'})^{(1-s)/k}.
\end{equation}
Hence
\begin{multline}
    G^{*}(\vec \alpha)
    \ll
    N^{2s-1}
    \gcd(q, q')^{2/k}
    (qq')^{-s/k + \epsilon}
    \\
    \cdot(1+N\abs{\kappa})^{-\eta}
    (1+N\abs{\kappa'})^{-\eta}
    (1+N^{k}\abs{\tau})^{(1-s)/k + \eta}
    (1+N^{k}\abs{\tau'})^{(1-s)/k + \eta},
\end{multline}
whenever $1<\eta < (s-1)/k -1$. Therefore,
\begin{multline}
    S^{*}(q, q')V^{*}(\vec \tau)
    \ll
    N^{2s-1}
    \gcd(q, q')^{2/k}
    (qq')^{2-s/k + \epsilon}
    \\
    \cdot(1+N\abs{\kappa})^{-\eta}
    (1+N\abs{\kappa'})^{-\eta}
    (1+N^{k}\abs{\tau})^{(1-s)/k + \eta}
    (1+N^{k}\abs{\tau'})^{(1-s)/k + \eta}.
\end{multline}

As the major arcs are disjoint,
\begin{equation}
    \int_{\Na}G^{*}(\vec \alpha)\d{\vec \alpha}
    = \sum_{q,q' < N^{\Delta}}
    S^{*}(q, q')
    \int_{\substack{
            \abs{q\tau}, \abs{q'\tau'} < N^{\Delta-k}\\
            \abs{q\kappa}, \abs{q'\kappa'} < N^{\Delta-1}
    }}
    V^{*}(\vec\tau)\d{\vec \tau}.
\end{equation}
Choosing $\eta = 1 + \epsilon$, we compute that
\begin{align}
&\sum_{q,q'\geq 1} \abs{S^{*}(q, q')}
\int_{\R^{3}}\int_{N^{\Delta-1}/q}^{\infty}
    \abs{V^{*}(\vec\tau)}\d{\kappa}\d\tau\d{\tau'}\d{\kappa'}
    \\
&\ll N^{2s-1}N^{-2k -1}N^{-1-\epsilon \Delta}
    \sum_{q,q'\geq 1}\gcd(q,q')^{2/k}(qq')^{2-s/k + \epsilon} q^\eps
    \\
&\ll N^{2s-2k -3-\epsilon \Delta}
    = o(\#\mathcal P^{2s-5}/R^{2k-2}),
\end{align}
where we bound the last sum by factoring out $d=\gcd(q, q')$:
\begin{equation}
    \sum_{q,q'\geq 1}\gcd(q,q')^{2/k}(qq')^{2-s/k + \epsilon} q^\eps
    \ll \sum_{d\geq 1}d^{4 -2(s-1)/k + 2 \epsilon}\Bigl( \sum_{q\geq 1} q^{2 - s/k +\epsilon} \Bigr)^{2} \ll 1,
\end{equation}
since $s \geq 3k+1$.
Choosing $\eta = (s-3)/2 + \epsilon$, we similarly compute that
\begin{align}
&\sum_{q,q'\geq 1} \abs{S^{*}(q, q')}
    \int_{\R^{3}}\int_{N^{\Delta-k}/q}^{\infty}
    \abs{V^{*}(\vec\tau)}\d{\tau}\d{\tau'}\d\kappa\d{\kappa'}
    \\
&\ll N^{2s-1}N^{-k-2}N^{-k-\epsilon \Delta}
    = o(\#\mathcal P^{2s-5}/R^{2k -2}).
\end{align}
Thus by symmetry,
\begin{equation}
    \int_{\Na}G^{*}(\vec \alpha)\d{\vec \alpha}
    = \sum_{q,q'< N^{\Delta}} S^{*}(q, q')
    \int_{\R^{4}}V^{*}(\vec\tau)\d{\vec \tau}
    + o(\#\mathcal P ^{2s-5}/R^{2k-2}).
\end{equation}
Moreover, we have
\begin{align}
&\sum_{q'\geq 1}\sum_{q\geq N^{\Delta}}S^{*}(q, q')
\int_{\R^{4}}V^{*}(\vec\tau)\d{\vec \tau}
\\
&\ll N^{2s-2k -3}\sum_{q' \geq 1}\sum_{q\geq N^{\Delta}} \gcd(q, q')^{2/k}(qq')^{2-s/k + \epsilon}
\\
&\ll N^{2s-2k -3}(N^{\Delta})^{3-s/k+\epsilon}
= o(\#\mathcal P ^{2s-5}/R^{2k-2}),
\end{align}
and by symmetry,
\begin{equation}
    \int_{\Na}G^{*}(\vec \alpha)\d{\vec \alpha}
    = \sum_{q,q'\geq 1} S^{*}(q, q')
    \int_{\R^{4}}V^{*}(\vec\tau)\d{\vec \tau}
    + o(\#\mathcal P ^{2s-5}/R^{2k-2}).
\end{equation}

Changing variables yields
\begin{align}
&\int_{\R^{4}}V^{*}(\vec\tau)\d{\vec \tau}
\\
&= m^{1-2s}\int_{\R^{4}}\int_{[L, R]^{2s-1}}
e(\tau\F(\vec x) + \tau'\F(\vec x') + \kappa\L(\vec x) + \kappa'\L(\vec x')) \d{\vec x}\d{x'_2}\cdots \d{x'_s}\d{\vec\tau}
\\
&= R^{2s-2k -3}m^{1-2s} \mathfrak J^{*},
\end{align}
where
\begin{equation}
    \mathfrak J^{*} = \int_{\R^{4}}\int_{B^{*}}
    e(\tau\F(\vec x) + \tau'\F(\vec x') + \kappa\L(\vec x) + \kappa'\L(\vec x')) \d{\vec x}\d{x'_2}\cdots \d{x'_s}\d{\vec\tau},
\end{equation}
and $B^{*} = [L/R, 1]^{2s-1}$.
Therefore
\begin{equation}
    \int_{\Na}G^{*}(\vec \alpha)\d{\vec \alpha}
    = R^{2s-2k-3}m^{1-2s}\mathfrak S^{*}\mathfrak J^{*}
    + o(\#\mathcal P^{2s-5}/R^{2k-2}),
\end{equation}
where
\begin{equation}
    \mathfrak S^{*} = \sum_{q, q'\geq 1}S^{*}(q, q').
\end{equation}
Thence,
\begin{equation}
    \label{eq:aux-cm-result}
    T_2
    = R^{2s-2k-3}m^{1-2s}\mathfrak S^{*}\mathfrak J^{*}
    + o(\#\mathcal P^{2s-5}/R^{2k-2}).
\end{equation}

\subsection{Singular integral}
Here we show that
\begin{equation}
    \label{eq:aux-singular-integral-objective-A}
    \mathfrak J^{*}
    \ll \biggl(1-\frac{L}{R}\biggr)^{2s-5}
\end{equation}
in Case \ref{case:neq}, and
\begin{equation}
    \label{eq:aux-singular-integral-objective-B}
    \mathfrak J^{*}
    \ll \biggl(1-\frac{L}{R}\biggr)^{2s-7}
\end{equation}
in Case \ref{case:eq}.

In Case \ref{case:neq}, we follow \cite[\S11]{Sch1982} and interpret $\mathfrak J^{*}$ as a real density. Recall the identity
\begin{equation}
    \lambda(y) \defeq \max\{ 1-\abs y, 0 \}
    = \int_{\R}e(\alpha y)\sinc^{2}(\pi \alpha) \d \alpha,
\end{equation}
from which we obtain
\begin{equation}
    \lambda_{T}(y)
    \defeq T\lambda(Ty)
    =\int_{\R}e(\tau y)\sinc^{2}(\pi \tau/T)\d \tau
\end{equation}
for $T\geq 1$. Writing $\vec x^{*} = (\vec x, x'_2,\dots, x'_s)$ and $\vec x' = (x_1, x'_2,\dots,x'_s)$, we thus have
\begin{align}
\mu^{*}_T &\defeq
\int_{B^{*}}\lambda_T(\F(\vec x))\lambda_T(\F(\vec x'))\lambda_T(\L(\vec x))\lambda_T(\L(\vec x'))\d{\vec x^{*}} \\
&= \int_{\R^{4}}\int_{B^{*}}e(\tau\F(\vec x) + \tau'\F(\vec x') + \kappa\L(\vec x) + \kappa'\L(\vec x'))
\\
&\qquad \cdot \sinc^{2}(\pi\tau/T)\sinc^{2}(\pi\tau'/T)\sinc^{2}(\pi\kappa/T)\sinc^{2}(\pi\kappa'/T) \d{\vec x^{*}}\d{\vec \tau}.
\end{align}
Now
\begin{equation}
    \mathfrak J^{*} -\mu^{*}_T = \int_{\R^{4}}f(\vec \tau)g(\vec \tau, T)\d{\vec \tau},
\end{equation}
where
\begin{equation}
    f(\tau) = \int_{B^{*}}e(\tau\F(\vec x) + \tau'\F(\vec x') + \kappa\L(\vec x) + \kappa'\L(\vec x'))\d{\vec x^{*}}
\end{equation}
and
\begin{equation}
    g(\tau, T) = 1 - \sinc^{2}(\pi\tau/T)\sinc^{2}(\pi\tau'/T)\sinc^{2}(\pi\kappa/T)\sinc^{2}(\pi\kappa'/T).
\end{equation}

Note that
\begin{equation}
    f(\vec \tau) = v^{*}_1(\tau + \tau', \kappa + \kappa')\prod_{i=2}^{s}v^{*}_i(\tau, \kappa)v^{*}_i(\tau', \kappa'),
\end{equation}
where
\begin{equation}
    \label{eq:def-vistar}
v^{*}_i(\tau, \kappa) = \int_{L/R}^{1}e(a_i\tau x^{k} + b_i \kappa x)\d x
\qquad (1\leq i \leq s).
\end{equation}
It now follows from \cite[Theorem 7.3]{Vau1997} that
\begin{equation}
    f(\vec \tau) \ll (1 + \abs{\tau} + \abs{\kappa})^{(1-s)/k}(1 + \abs{\tau'} + \abs{\kappa'})^{(1-s)/k}.
\end{equation}
Further, it follows from the Taylor expansion that 
\begin{equation}
    \sinc^{2}(y) = 1 + O(\min\{1, y^{2}\}),
\end{equation}
and consequently
\begin{align}
g(\vec \tau, T) &\ll
\min\{ 1, (\tau/T)^{2} \} + \min\{ 1, (\tau'/T)^{2} \}
\\
&\qquad + \min\{ 1, (\kappa/T)^{2} \} + \min\{ 1, (\kappa'/T)^{2} \}.
\end{align}
Thus, by symmetry, we have $\mathfrak J^{*} - \mu^{*}_T \ll I_1I_2(T)$, where
\begin{equation}
    I_1 = \int_{\R^{2}}(1+\abs \tau + \abs \kappa)^{(1-s)/k}\d\tau\d\kappa \ll 1
\end{equation}
and
\begin{equation}
I_2(T) = \int_{\R^{2}}(1+\abs\tau + \abs \kappa)^{(1-s)/k}\min\Bigl\{ 1, \frac{\tau^{2} + \kappa^{2}}{T^{2}} \Bigr\}\d\tau\d\kappa.
\end{equation}
By symmetry again, we have $I_2(T)\ll I_3(T) + I_4(T)$, where
\begin{align}
I_3(T)
&= T^{-2}\int_{0\leq \tau\leq\kappa\leq T}(1+\kappa)^{2 + (1-s)/k}\d\tau\d\kappa
\\ &\leq T^{-2}\int_0^{T}(1+\kappa)^{3+(1-s)/k}\d\kappa 
    \\
&\ll T^{-2} + T^{2 + (1-s)/k} \log T \to 0
    \qquad (T\to\infty)
\end{align}
and
\begin{align}
I_4(T) &= \int_{T}^{\infty}\int_0^{\kappa}\kappa^{(1-s)/k}\d\tau\d\kappa
    = \int_T^{\infty} \kappa^{1+(1-s)/k}\d\kappa
    \\
&\ll T^{2 + (1-s)/k} \to 0
    \qquad (T\to\infty).
\end{align}
In particular, 
\begin{equation}
    \mathfrak J^{*} = \lim_{T\to\infty}\mu^{*}_T.
\end{equation}

It remains to show that
\begin{equation}
    \vol(\mathcal R^{*}) \ll (1-L/R)^{2s-5}T^{-4},
\end{equation}
where
\begin{equation}
    \mathcal R^{*} = \{ \vec x^{*}\in B^{*} : \abs{\F(\vec x)}, \abs{\F(\vec x')}, \abs{\L(\vec x)}, \abs{\L(\vec x')} \leq T^{-1} \}.
\end{equation}
By symmetry, given $x_1,\dots,x_{s-2}\in [ L/R, 1]^{s-2}$, it now suffices to prove
\begin{equation}
    \vol(\{ (x_{s-1}, x_s) \in [L/R, 1]^{2} : \abs{\F(\vec x)}, \abs{\L(\vec x)} \leq T^{-1}\}) \ll T^{-2}.
\end{equation}

We may assume that $1-L/R < c_0$, where $c_0 = c_0(\mathcal V)$ is a small, positive constant. This ensures that $\det (J) \asymp 1$ for $(x_{s-1}, x_s) \in [L/R ,1]^{2}$, where
\begin{equation}
    J = 
    \begin{pmatrix}
        ka_{s-1}x_{s-1}^{k-1} & ka_{s}x_{s}^{k-1} \\
        b_{s-1} & b_{s}
    \end{pmatrix}
\end{equation}
is the Jacobian matrix of $(x_{s-1}, x_s)\mapsto (\F(\vec x), \L(\vec x))$. Changing variables completes the proof of \eqref{eq:aux-singular-integral-objective-A} in Case \ref{case:neq}.

It remains to show that \eqref{eq:aux-singular-integral-objective-B} holds in Case \ref{case:eq}. 
With the notation \eqref{eq:def-vistar}, denote
\[
\mathfrak J^{\dag} = \int_{\R^{2}}\prod_{i=2}^{s}\abs{v_i^{*}(\tau, \kappa)} \d \tau \d \kappa,
\]
and observe that
$
    \mathfrak J^{*} \ll (1-L/R)(\mathfrak J^{\dag})^{2}.
$
By symmetry and H\"older's inequality, for \eqref{eq:aux-singular-integral-objective-B} it is enough to show that
\begin{equation}
    \label{eq:aux-singular-integral-objective-B-intermediate}
    \int_{\R^{2}}\abs{v^{*}_1(\tau, \kappa)}^{s-1}\d \tau \d \kappa \ll (1-L/R)^{s-4}.
\end{equation}

A change of variables gives
\begin{equation}
v^{*}_1(\tau, \kappa) = \int_0^{X} e(\beta_0 + \beta_1 y + \dots+\beta_k y^{k})\d y,
\end{equation}
where $X = 1-L/R$, $\beta_0 \in \R$, and
\begin{gather}
\beta_1 = a_1 \tau k\left( \frac{L}{R} \right)^{k-1} + b_ 1\kappa,
\qquad
\beta_i = a_1 \tau \binom{k}{i}\left( \frac{L}{R}\right)^{k-i}
\quad (2\leq i \leq k).
\end{gather}
Now \cite[Theorem 7.3]{Vau1997} furnishes
\begin{equation}
v^{*}_1(\tau, \kappa) \ll X(1 + \abs{\beta_1}X + \dots + \abs{\beta_k}X^{k} )^{-1/k}.
\end{equation}
We may suppose that $L/R\gg 1$, so
\begin{equation}
v^{*}_1(\tau, \kappa) \ll X(1 + \abs{\beta_1}X + \abs{\tau}X^{2})^{-1/k}.
\end{equation}
Changing variables from $\kappa$ to $\eta = a_1 \tau k\left( \frac{L}{R} \right)^{k-1} + b_1 \kappa$, we have
\begin{equation}
\int_{\R^{2}}\abs{v_1^{*}(\tau, \kappa)}^{s-1} \d \tau \d \kappa \ll X^{s-1}\int_{\R^{2}}(1 + \abs{\eta}X + \abs{\tau}X^{2})^{(1-s)/k}\d \tau \d \eta
\ll X^{s-4},
\end{equation}
delivering \eqref{eq:aux-singular-integral-objective-B-intermediate} and hence \eqref{eq:aux-singular-integral-objective-B} in Case \ref{case:eq}.

\subsection{Singular series}
Here we show that
\begin{equation}
    \label{eq:aux-singular-series-objective-A}
    \mathfrak S^{*} \ll m^{4}
\end{equation}
in Case \ref{case:neq}, and
\begin{equation}
    \label{eq:aux-singular-series-objective-B}
    \mathfrak S^{*} \ll m^{6}
\end{equation}
in Case \ref{case:eq}.
These are the final ingredients needed for \eqref{T2a} in Case \ref{case:neq} and \eqref{T2b} in Case \ref{case:eq}. In Case \ref{case:neq}, for instance, the inequalities \eqref{eq:aux-singular-integral-objective-A} and \eqref{eq:aux-singular-series-objective-A} give
\begin{equation}
    m^{1-2s}\mathfrak S^{*}\mathfrak J^{*}
    \ll m^{5-2s} (1-L/R)^{2s-5}
    \ll R^{5-2s}\#\mathcal P^{2s-5},
\end{equation}
and substituting this into \eqref{eq:aux-cm-result} yields \eqref{T2a} when $g=1$.

For $Q\in \N$, denote by $M^{*}(Q)$ the number of solutions $(x_1,\dots,x_s,x'_2,\dots,x'_s)\in[Q]^{2s-1}$ to
\begin{equation}
    \label{eq:aux-system-mod-Q}
    \F(m\vec x + \vec u)
    \equiv \F(m\vec x' + \vec u)
    \equiv \L(m\vec x + \vec u)
    \equiv \L(m\vec x' + \vec u)
    \equiv 0 \mod Q.
\end{equation}
\begin{lem}
    \label{lm:sol-count-from-Sstar}
    For $Q\in\N$, 
    \begin{equation}
        \sum_{d,d'\divs Q}S^{*}(d,d') = Q^{5-2s}M^{*}(Q).
    \end{equation}
\end{lem}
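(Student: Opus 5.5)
We follow the proof of Lemma \ref{lm:S-from-sol-count}, now with four detecting characters. By orthogonality modulo $Q$,
\begin{equation}
M^{*}(Q) = Q^{-4}\sum_{v,w,v',w'\le Q}\ \sum_{\vec x,\vec x'}e_Q\bigl(v\F(m\vec x+\vec u)+w\L(m\vec x+\vec u)+v'\F(m\vec x'+\vec u)+w'\L(m\vec x'+\vec u)\bigr),
\end{equation}
where $\vec x=(x_1,\dots,x_s)$ and $\vec x'=(x_1,x'_2,\dots,x'_s)$ range over $[Q]^{2s-1}$, sharing the first coordinate. We reduce the pairs $(v,w)$ and $(v',w')$ separately to lowest terms: $v/Q=a/d$, $w/Q=b/d$ with $d\divs Q$ and $\gcd(d,a,b)=1$, and likewise $v'/Q=a'/d'$, $w'/Q=b'/d'$ with $d'\divs Q$ and $\gcd(d',a',b')=1$. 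This is a bijection between $[Q]^2$ and pairs $(d;a,b)$ with $d\divs Q$, $a,b\le d$, $\gcd(d,a,b)=1$, and similarly for the primed pair. Hence the quadruple sum becomes $\sum_{d,d'\divs Q}$ of the corresponding sums over reduced residues.

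For fixed $d,d'$, the summand factorises over coordinates. For $2\le i\le s$, the variable $x_i$ appears only with phase $e_d(aa_i(mx_i+u)^k+bb_i(mx_i+u))$. This is $d$-periodic in $x_i$, so the sum over $x_i\in[Q]$ equals $(Q/d)S_i(d;a,b)$. Similarly each $x'_i$ contributes $(Q/d')S_i(d';a',b')$.

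The shared variable $x_1$ carries the phase
\[
e\Bigl(\bigl(\tfrac{a}{d}+\tfrac{a'}{d'}\bigr)a_1(mx_1+u)^k+\bigl(\tfrac{b}{d}+\tfrac{b'}{d'}\bigr)b_1(mx_1+u)\Bigr)=e_{dd'}\bigl(a''a_1(\cdot)^k+b''b_1(\cdot)\bigr),
\]
with the notation \eqref{eq:double-prime-notation} ($q''=dd'$). This is $dd'$-periodic, and $dd'\divs Q$ need not hold. This is the one point requiring care, and the step I expect to be the main obstacle. However, the phase is also $\lcm(d,d')$-periodic, and $\lcm(d,d')\divs Q$. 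Periodicity modulo $\lcm(d,d')$ also gives $S_1(dd';a'',b'')=(dd'/\lcm(d,d'))\sum_{x\le \lcm(d,d')}e_{dd'}(\cdots)$. Hence the sum over $x_1\in[Q]$ equals $(Q/\lcm(d,d'))\cdot(\lcm(d,d')/(dd'))S_1(dd';a'',b'')=(Q/(dd'))\,S_1(dd';a'',b'')$. So the shared coordinate contributes $Q(dd')^{-1}S_1(q'';a'',b'')$ in all cases.

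Multiplying the $2s-1$ factors gives $Q^{2s-1}(dd')^{-s}S^{*}(d,d';a,b,a',b')$, using \eqref{eq:S-star-long-def}. Summing over the reduced residues yields $Q^{2s-1}S^{*}(d,d')$ by definition. Therefore $M^{*}(Q)=Q^{-4}Q^{2s-1}\sum_{d,d'\divs Q}S^{*}(d,d')$, which rearranges to the claim.
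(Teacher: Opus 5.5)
Your proof is correct and follows the same orthogonality and reduction-to-lowest-terms argument as the paper, with each coordinate sum then evaluated by periodicity. Your $\lcm(d,d')$-periodicity step for the shared variable $x_1$ justifies the factor $Q/(dd')$. The paper's display uses this factor (summing over $x_1\le dd'$) without comment, even though $dd'$ need not divide $Q$.
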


\begin{proof}
By orthogonality, 
\begin{align}
M^{*}(Q)
&=
\begin{multlined}[t]
Q^{-4} \sum_{v,w,v',w' \leq Q} \ \sum_{x_1,\dots,x_s,x'_2,\dots,x_s'\leq Q}
e_Q(v\F(m\vec x + \vec u) + w\L(m\vec x + \vec u))
\\
\cdot e_Q(v'\F(m\vec x' + \vec u) + w'\L(m\vec x' + \vec u))
\end{multlined}
\\
&=
Q^{-5} \sum_{d, d'\divs Q}(Q/d)^{s}(Q/d')^{s}
\sum_{\substack{a,b \leq d, \, \gcd(d,a, b) = 1
\\
a',b' \leq d',\, \gcd(d',a', b') = 1
}} \
\sum_{\substack{
x_1 \leq dd' \\
x_2, \dots, x_s \leq d \\
x_2', \dots, x'_s \leq d'}}
\\
&\qquad\qquad e_d(a\F(m\vec x + \vec u) + b\L(m\vec x + \vec u)) e_{d'}(a'\F(m\vec x' + \vec u) + b'\L(m\vec x' + \vec u))
\\
&= Q^{2s-5} \sum_{d, d' \mid Q}
S^{*}(d, d').
\end{align}
\end{proof}

It follows from the Chinese remainder theorem that $M^{*}(\cdot)$ is a multiplicative arithmetic function. The Möbius inversion formula now implies that $S^{*}(\cdot)$ is multiplicative, where
\begin{equation}
    S^{*}(Q) = \sum_{\lcm(q, q') = Q}S^{*}(q, q').
\end{equation}
Observe that
\begin{equation}
    \mathfrak S^{*} = \sum_{Q\geq 1}S^{*}(Q),
\end{equation}
and by \eqref{eq:long-S-bound},
\begin{align}
\sum_{Q\geq 1}\abs{S^{*}(Q)}
&\leq \sum_{q,q'\geq 1} \abs{S^{*}(q, q')}
    \ll_M \sum_{q,q'\geq 1}\gcd(q, q')^{2/k}(qq')^{2-s/k + \epsilon} 
    \\
&\leq \sum_{d\geq 1}d^{4-2(s-1)/k + 2 \epsilon}\Bigl(\sum_{q\geq 1}q^{2-s/k + \epsilon}  \Bigr)^{2} \ll 1,
\end{align}
since $s\geq 3k+1$. Hence, we have the absolutely convergent product representation
\begin{equation}
    \mathfrak S^{*} = \prod_{p}\chi^{*}_p,
\end{equation}
where
\begin{equation}
    \chi^{*}_p = \sum_{t\geq 0} S^{*}({p^{t}}).
\end{equation}
The following is an immediate consequence of Lemma \ref{lm:sol-count-from-Sstar}.
\begin{cor}
    \label{cor:chistar-as-limit}
    If $p$ is prime, then
    \begin{equation}
        \chi^{*}_{p} = \lim_{t\to \infty}p^{-t(2s-5)}M^{*}(p^{t}).
    \end{equation}
\end{cor}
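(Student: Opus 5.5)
The statement is Corollary \ref{cor:chistar-as-limit}: for each prime $p$,
\[
\chi^*_p = \lim_{t\to\infty} p^{-t(2s-5)} M^*(p^t).
\]
I would deduce it directly from Lemma \ref{lm:sol-count-from-Sstar}, in the same way the analogous corollary for $\chi_p$ follows from Lemma \ref{lm:S-from-sol-count}.

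First, apply Lemma \ref{lm:sol-count-from-Sstar} with $Q = p^t$. The divisors of $p^t$ are the powers $p^i$ with $0 \le i \le t$, so
\[
p^{t(5-2s)} M^*(p^t) = \sum_{0 \le i,j \le t} S^*(p^i, p^j).
\]
Next, group the terms by $\max\{i,j\}$. Since $\lcm(p^i, p^j) = p^{\max\{i,j\}}$, the pairs with $\max\{i,j\} = u$ contribute exactly
\[
S^*(p^u) = \sum_{\lcm(q,q') = p^u} S^*(q,q'),
\]
because $\lcm(q,q') = p^u$ forces both $q$ and $q'$ to be powers of $p$ not exceeding $p^u$. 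Hence
\[
p^{-t(2s-5)} M^*(p^t) = \sum_{u=0}^{t} S^*(p^u).
\]

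Finally, let $t \to \infty$. The right-hand side is a partial sum of the series $\sum_{u \ge 0} S^*(p^u) = \chi^*_p$. This series converges absolutely, since it is a subseries of $\sum_{Q \ge 1} |S^*(Q)|$, which was shown above to be finite using \eqref{eq:long-S-bound} and $s \ge 3k+1$. The limit therefore exists and equals $\chi^*_p$.

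I expect no real obstacle. The only point needing care is the bookkeeping that the double sum over pairs of divisors of $p^t$ regroups exactly into the single-variable sums $S^*(p^u)$. This holds because $\lcm$ restricted to divisors of $p^t$ is just the maximum of the exponents, so each pair $(i,j)$ is counted once, under $u = \max\{i,j\}$.
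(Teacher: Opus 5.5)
Your proposal is correct and is essentially the paper's argument. The paper records the corollary as an immediate consequence of Lemma \ref{lm:sol-count-from-Sstar}, and your proof spells that deduction out. You take $Q=p^t$, regroup the double sum via $\lcm(p^i,p^j)=p^{\max\{i,j\}}$, and pass to the limit using the absolute convergence of $\sum_{Q\ge 1} \abs{S^*(Q)}$.
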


Let $p$ be prime, and suppose that $p^{r}\edivs m$, where $r\geq 0$. In light of Corollary \ref{cor:chistar-as-limit}, for the purpose of estimating $\chi^{*}_p$ we may assume that $m=p^{r}$ and $u=1$.
Let $C$ be a large constant depending only on $\mathcal V$.

\subsubsection{Case \ref{case:neq}}
We decompose $\chi^{*}_p$ into three sums:
\begin{equation}
    \label{eq:chi-star-decomposition}
    \chi^{*}_p = \Sigma_1 + 2\Sigma_2 + \Sigma_3,
\end{equation}
where
\begin{equation}
    \label{eq:decomposition-pieces}
    \Sigma_1 = \sum_{0\leq t,t'\leq r}S^{*}(p^{t}, p^{t'}),
    \qquad
    \Sigma_2 = \sum_{0\leq t\leq r < t'}S^{*}(p^{t}, p^{t'}),
    \qquad
    \Sigma_3 = \sum_{t,t'>r} S^{*}(p^{t}, p^{t'}).
\end{equation}
Let $t, t', a,b,a', b' \in \N$, and suppose that $t\leq r$. It follows from \eqref{eq:Si-as-exp-sum} and \eqref{eq:S-star-long-def} that in this case
\begin{equation}
    \label{eq:Sstar-splits-at-t-below-r}
    S^{*}(p^{t}, p^{t'}; a, b, a', b')
    = S(p^{t}; a, b)S(p^{t'}; a' , b').
\end{equation}
Therefore, from \eqref{eq:S-bound-up-to-r},
\begin{equation}
    \label{eq:Sigma_1-result}
    \Sigma_1 
    = \Bigl(\sum_{t\leq r} S(p^{t})\Bigr)^{2} = p^{4r}.
\end{equation}

Similarly,
\begin{equation}
    \label{eq:Sigma_2-splits}
    \Sigma_2
    =\sum_{t\leq r}S(p^{t})\sum_{t'>r}S(p^{t'})
    = p^{2r}\sum_{t'>r}S(p^{t'}),
\end{equation}
and so by \eqref{eq:S-gcd-bound-at-power-p},
\begin{equation}
    \Sigma_2 \ll p^{2r} \sum_{t > r}
    \sum_{\substack{ a,b\leq p^{t}\\ p\ndivs \gcd(a,b)}}
    \prod_{i\leq s}\left(\frac{\gcd(p^{t-r}, ka_ia + b_ib)}{p^{t-r}}\right)^{1/k}.
\end{equation}
If $p>C$, then \eqref{eq:S-bound-at-large-power-p} implies
\begin{equation}
    \Sigma_2 \ll p^{2r}\cdot p^{2r + 2 - s/k} \leq p^{4r -3/2}.
\end{equation}
If $p\leq C$, we similarly have $p^{C} \divs (ka_ia + b_ib)$ for at most one value of $i$, and we recover the same estimate. Indeed, we have $p^{C}\ll 1$ in this context, so
\begin{align}
&\sum_{\substack{ a,b\leq p^{t}\\ p\ndivs \gcd(a,b)}}
\prod_{i\leq s}\left(\frac{\gcd(p^{t-r}, ka_ia + b_ib)}{p^{t-r}}\right)^{1/k}
\\
&\ll p^{2t}\cdot p^{-s(t-r)/k} + p^{-(s-1)(t-r)/k} \cdot
\#\bigl\{ (a,b) \in [p^{t}]^2: p^{C} \divs \prod_{i\leq s}(ka_ia+b_ib)\bigr\}
\\
\label{eq:ab-bound-small-prime-all-terms}
&\ll p^{2t} \cdot p^{-s(t-r)/k} + p^{2t-1}\cdot p^{-(s-1)(t-r)/k}.
\end{align}
Once again,
\begin{equation}
    \label{eq:Sigma_2-result}
    \Sigma_2 \ll p^{2r}\cdot p^{2r + 2 - s/k} \leq p^{4r -3/2}.
\end{equation}

To estimate $\Sigma_3$, we use the bound $S_1(p^{t+t'}; a '', b '') \leq p^{t + t'}$ and \eqref{eq:Si-gcd-bound-at-power-p} to get
\begin{align}
&(p^{t+t'})^{-s}S^{*}(p^{t}, p^{t'}; a, b, a', b')
\\
&\ll
\prod_{i= 2}^{s}\left(\frac{\gcd(p^{t-r}, ka_ia + b_ib)}{p^{t-r}}\right)^{1/k}
\cdot
\prod_{i= 2}^{s}\left(\frac{\gcd(p^{t'-r}, ka_ia' + b_ib')}{p^{t'-r}}\right)^{1/k}.
\end{align}
If $p>C$, we again know that $p$ divides $ka_ia+b_ib$ for at most one value of $i$. Similarly to \eqref{eq:S-bound-at-large-power-p}, for $t>r$ we have
\begin{align}
&\sum_{\substack{ a,b\leq p^{t}\\ p\ndivs \gcd(a,b)}}
\prod_{i = 2}^{s}\left(\frac{\gcd(p^{t-r}, ka_ia + b_ib)}{p^{t-r}}\right)^{1/k}
\\
&\ll p^{2t}\cdot p^{-(s-1)(t-r)/k} + p^{-(s-2)(t-r)/k}\cdot
\#\bigl\{ (a,b) \in [p^{t}]^2: p\divs\prod_{i=2}^{s}(ka_ia+b_ib)\bigr\}
    \\ 
    \label{eq:ab-bound-big-prime-minus-first}
    &\ll p^{2t}\cdot p^{-(s-1)(t-r)/k} + p^{2t-1}\cdot p^{-(s-2)(t-r)/k}.
\end{align}
If $p\leq C$, then $p^{C}$ divides $ka_ia + b_ib$ for at most one value of $i$, so as in \eqref{eq:ab-bound-small-prime-all-terms}, we recover the same estimate as above:
\begin{align}
&\sum_{\substack{ a,b \leq p^{t} \\ p\ndivs \gcd(a,b)}}
\prod_{i = 2}^{s}\left(\frac{\gcd(p^{t-r}, ka_ia + b_ib)}{p^{t-r}}\right)^{1/k}
\\
&\ll p^{2t} \cdot p^{-(s-1)(t-r)/k} + p^{-(s-2)(t-r)/k}\cdot \#\bigl\{(a,b) \in [p^{t}]^2: p^{C}\divs\prod_{i=2}^{s}(ka_ia+b_ib)\bigr\}
\\
\label{eq:ab-bound-small-prime-minus-first}    
&\ll p^{2t}\cdot p^{-(s-1)(t-r)/k} + p^{2t-1}\cdot p^{-(s-2)(t-r)/k}.
\end{align}
By symmetry, the combination of these bounds yields
\begin{equation}
\label{eq:Sigma_3-result}
\begin{aligned}
\Sigma_3
&\ll
\Bigl(
\sum_{t>r} (p^{2t-(s-1)(t-r)/k} + p^{2t-1-(s-2)(t-r)/k})
\Bigr)^{2}
    \\
    &\ll p^{4r + 4 - 2(s-1)/k} \leq p^{4r -2}.
\end{aligned}
\end{equation}

Combining \eqref{eq:Sigma_1-result}, \eqref{eq:Sigma_2-result}, and \eqref{eq:Sigma_3-result} gives
\begin{equation}
    \chi^{*}_p = p^{4r}(1 + O(p^{-3/2})).
\end{equation}
Since $\prod_{p}(1 + O(p^{-3/2}) \ll 1$, we conclude that \eqref{eq:aux-singular-series-objective-A} holds in Case \ref{case:neq}.

\subsubsection{Case \ref{case:eq}} 
As $\ba$ is primitive and $a_1 + \cdots + a_s = 0$, we have
\begin{equation}
    \label{eq:relaxed-gcd-conditon}
    p\ndivs \gcd(a_2,\dots,a_s).
\end{equation}
As before, we start by decomposing $\chi_p^{*}$ according to the ranges of $t$ and $t'$, writing
\begin{equation}
    \chi^{*}_p =\Sigma_1 + 2\Sigma_2 + \Sigma_4 + 2\Sigma_5 + \Sigma_6.
\end{equation}
Here $\Sigma_1$, $\Sigma_2$ are as in \eqref{eq:decomposition-pieces}, and
\begin{equation}
    \Sigma_4 = \sum_{r<t, t'\leq 2r}S^{*}(p^{t}, p^{t'}),
    \qquad
    \Sigma_5 = \sum_{r<t\leq 2r< t'}S^{*}(p^{t}, p^{t'}),
    \qquad
    \Sigma_6 = \sum_{t, t' > 2r}S^{*}(p^{t}, p^{t'}).
\end{equation}
As in \eqref{eq:Sigma_1-result}, we have
\begin{equation}
    \label{eq:Sigma_1-result-again}
    \Sigma_1 = p^{4r}.
\end{equation}

In order to bound $\Sigma_2$, we observe that \eqref{eq:S-bound-up-to-r}, \eqref{eq:S-bound-case-B2-below-2r}, and \eqref{eq:bound-S-sum-above-2r-case-B2} hold in this context. Consequently,
\begin{equation}
\sum_{t>r}S(p^{t}) = p^{3r}(1 + O(p^{- 3/2})) - p^{2r}.
\end{equation}
It now follows from \eqref{eq:Sigma_2-splits} that
\begin{equation}
\label{eq:Sigma_2-result-again}
\Sigma_2 = p^{5r}(1 + O(p^{- 3/2})) - p^{4r}.
\end{equation}

Next, we consider $\Sigma_4$. Suppose that $r<t,t'\leq 2r$. Then
\begin{align}
&(p^{t+t'})^{-s}\abs{S^{*}(p^{t}, p^{t'}; a, b, a', b')}
\\
&\leq (p^{t + t'})^{1-s}
\abs[\Big]{
    \prod_{i=2}^{s}\sum_{x\leq p^{t}}e_{p^{t}}(a_ip^{r}(ka+b)x)
}
\cdot
\abs[\Big]{
    \prod_{i=2}^{s}\sum_{x\leq p^{t'}}e_{p^{t'}}(a_ip^{r}(ka'+b')x)
}.
\end{align}
Since $p\ndivs \gcd(a_2,\dots,a_s)$,
\begin{equation}
(p^{t+t'})^{-s} |S^{*}(p^{t}, p^{t'}; a, b, a', b')|
    \leq
    \begin{cases}
        1, & \text{if $p^{t-r}\divs (ka + b)$ and $p^{t'-r}\divs (ka' + b')$},
        \\
        0, &\text{otherwise}.
    \end{cases}
\end{equation}
We can then compute
\begin{equation}
|S^{*}(p^{t}, p^{t'})| \leq (p^{t} - p^{t-1})(p^{t'} - p^{t'-1})p^{2r},
\end{equation}
and so
\begin{equation}
\label{eq:Sigma_4-result}
|\Sigma_4| \leq (p^{3r}-p^{2r})^{2} = p^{6r} - 2p^{5r} + p^{4r}.
\end{equation}

We now estimate $\Sigma_5$.
Take $r<t\leq 2r$. As above, since $p\ndivs \gcd(a_2,\dots,a_s)$, 
\begin{equation}
p^{-t(s-1)}\prod_{i=2}^{s}\abs{S_i(p^{t}; a, b)}
\leq
\begin{cases}
1, & \text{if} \ p^{t-r}\divs (ka + b),
\\
0, & \text{if} \ p^{t-r} \ndivs (ka + b).
\end{cases}
\end{equation}
Now let $t'> 2r$ and suppose that $p\ndivs \gcd(a',b')$. As before, it follows from \eqref{eq:Si-bound-prime-pow} that
\begin{align}
p^{-t'} S_i(p^{t'}; a', b') 
&\ll p^{-t'/k} \gcd\bigl(p^{t'}, p^{2r}a_ia' k(k-1)/2, p^{r}a_i(ka ' + b ')\bigr)^{1/k}
\\
&\ll p^{-(t'-r)/k}\gcd(p^{t'-r}, p^{r}a', ka' + b')^{1/k}
\\
\label{eq:aux-Si-bound-case-B-above-2r}
&=  p^{-(t'-r)/k}\gcd(p^{r}, ka' + b')^{1/k},
\end{align}
for $i=2,\dots,s$. Moreover, under the assumption that $p^{t-r}$ divides $ka + b$,
with the notation from \eqref{eq:double-prime-notation}, we recover the same bound for $S_1(p^{t+ t'}; a '', b '')$:
\begin{align*} 
p^{-t-t'}S_1(p^{t+t'}; a '', b '') 
&\ll p^{-(t+t')/k}\gcd\bigl(p^{t+t'}, p^{2r}a '', p^{r}(ka '' + b '')\bigr)^{1/k}
\\
&= p^{-(t'-r)/k}\gcd\bigl(p^{t'-r}, p^{r}a' + p^{t'-t+r}a, ka' + b'  + p^{t'-t}(ka + b)\bigr)^{1/k}
\\
&= p^{-(t'-r)/k}\gcd(p^{r}, ka' + b')^{1/k}.
\end{align*}
These results combine to produce
\begin{equation}
(p^{t+t'})^{-s}S^{*}(p^{t}, p^{t'}; a, b, a', b')
\ll
    \begin{cases}
        p^{-(t'-r)s/k}\gcd(p^{r}, ka' + b')^{s/k}, & \text{if } p^{t-r}\divs (ka + b),
        \\
        0, &\text{if } p^{t-r} \ndivs (ka+b).
    \end{cases}
\end{equation}
Now
\begin{equation}
    S^{*}(p^{t}, p^{t'})
    \ll p^{t+r}\cdot p^{-(t'-r)s/k}\sum_{\substack{a', b' \leq p^{t'}\\ p\ndivs \gcd(a', b')}}\gcd(p^{r}, ka' + b')^{s/k}.
\end{equation}
The sum is
\begin{equation}
    \sum_{\substack{a', b' \leq p^{t'}\\ p\ndivs \gcd(a', b')}}\gcd(p^{r}, ka' + b')^{s/k}
    \ll \sum_{v = 0}^{r}p^{2t'-v}p^{vs/k}
    \ll p^{2t' -r(1-s/k)},
\end{equation}
so
\begin{equation}
    S^{*}(p^{t}, p^{t'}) \ll p^{t + 2t' -(t'-2r)s/k}.
\end{equation}
Therefore,
\begin{equation}
    \label{eq:Sigma_5-result}
    \Sigma_5 \ll p^{6r + 2 - s/k} \ll p^{6r - 3/2}.
\end{equation}

Finally, we bound $\Sigma_6$. Let $t,t'>2r$ and assume that $p\ndivs \gcd(a,b)$ and $p\ndivs \gcd(a', b')$. In this case, we use the bound \eqref{eq:aux-Si-bound-case-B-above-2r} to get
\begin{align}
&(p^{t+t'})^{-s}S^{*}(p^{t}, p^{t'}; a, b, a', b') \\
&\ll 
    p^{-(t-r)(s-1)/k}\gcd(p^{r}, ka + b)^{(s-1)/k}
    \cdot p^{-(t'-r)(s-1)/k}\gcd(p^{r}, ka' + b')^{(s-1)/k}.
\end{align}
This gives
\begin{align}
    S^{*}(p^{t}, p^{t'})
    &\ll
    \begin{multlined}[t]
    p^{-(t-r)(s-1)/k}\sum_{\substack{a, b \leq p^{t}\\ p\ndivs \gcd(a, b)}}\gcd(p^{r}, ka + b)^{(s-1)/k}
    \\
        \cdot
        p^{-(t'-r)(s-1)/k}\sum_{\substack{a', b' \leq p^{t'}\\ p\ndivs \gcd(a', b')}}\gcd(p^{r}, ka' + b')^{(s-1)/k}
    \end{multlined}
    \\
    &\ll 
    p^{2t -r -(t-2r)(s-1)/k}
    \cdot
    p^{2t' -r -(t'-2r)(s-1)/k}.
\end{align}
Therefore,
\begin{equation}
    \label{eq:Sigma_6-result}
    \Sigma_6 \ll (p^{3r + 2 - (s-1)/k})^{2} \leq p^{6r - 2}.
\end{equation}

The combination of \eqref{eq:Sigma_1-result-again}, \eqref{eq:Sigma_2-result-again}, \eqref{eq:Sigma_4-result}, \eqref{eq:Sigma_5-result} and \eqref{eq:Sigma_6-result} gives
\begin{equation}
\label{eq:chistar-bound-case-B}
\chi^{*}_p \leq p^{6r}(1+O(p^{-3/2})).
\end{equation}
Since $\prod_p (1+O(p^{- 3/2})) \ll 1$, we conclude that \eqref{eq:aux-singular-series-objective-B} holds in Case \ref{case:eq}.

\section{Configuration control}
\label{S:Config}

The generalised von Neumann lemmas in this section will ultimately tell us that $f_\unf$ and $f_\sml$ contribute negligibly to the relevant counting operator. The latter relies critically on the results of the previous two sections.

\begin{lem} 
[Fourier control]
\label{FourierControl}
If each $g_i$ is $1$-bounded and
\[
T(\bg) \ge \del N^{s-k-1},
\]
for some $\del \in (0,1)$, then
\[
\| \bg \|_{\FL} \gg \del^{2s} N^s.
\]
\end{lem}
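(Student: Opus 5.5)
We express $T(\bg)$ through the generating functions and interpolate: a crude $L^\infty$ bound on most factors, together with the mean value estimate of Lemma \ref{lm:moment-bound}, is applied to what remains. Orthogonality gives
\[
\del N^{s-k-1} \le T(\bg) = \int_{\bT^2} \prod_{i\le s} \tilde g_i(a_i\alp, b_i\bet) \d\alp\d\bet
\le \Bigl(\sup_{\alp,\bet}\prod_{i \le s}|\tilde g_i(a_i\alp,b_i\bet)|\Bigr)^{\theta}\int_{\bT^2}\prod_{i\le s}|\tilde g_i(a_i\alp,b_i\bet)|^{1-\theta}\d\alp\d\bet
\]
for any $\theta\in(0,1)$. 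The supremum is exactly $\| \bg \|_{\FL}$. So it suffices to show that, for a suitable $\theta$ of size roughly $1/(2s)$, the remaining integral is $\ll N^{(1-\theta)s - k - 1 + \text{small}}$. Rearranging then gives $\| \bg\|_\FL^\theta \gg \del N^{\theta s}$, that is $\|\bg\|_\FL \gg \del^{1/\theta} N^s$, and with $1/\theta \le 2s$ and $\del<1$ this is at least $\del^{2s}N^s$.

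To bound the remaining integral, we apply H\"older's inequality to split it into $s$ factors of the shape $\int_{\bT^2}|\tilde g_i(a_i\alp,b_i\bet)|^{s(1-\theta)}\d\alp\d\bet$. By periodicity, each such factor equals $\int_{\bT^2}|\tilde g_i(\alp,\bet)|^{s(1-\theta)}$, up to a multiplicity that is cancelled when we change variables. We choose $\theta$ so that $p = s(1-\theta)$ lies strictly between $s_0 - 1$ and $s$; for instance $\theta = 1/(2s)$ gives $p = s - 1/2 > s_0 - 1$. What we then need is a restriction estimate for arbitrary $1$-bounded weights:
\[
\int_{\bT^2}\Bigl|\sum_{n\le N} g(n)e(\alp n^k+\bet n)\Bigr|^{p}\d\alp\d\bet \ll_p N^{p-k-1} \qquad (p > s_0-1).
\]
This is presumably the estimate the paper labels \eqref{eq:restricion-estimate}.

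The main obstacle is this restriction estimate, and it has to be proved without losing $N^\eps$. Lemma \ref{lm:moment-bound} gives only the even moment $s_0-1$ with an $N^\eps$ loss, and it does so only for $g = 1_\cP$ rather than for arbitrary weights. Since $s_0$ is odd, $s_0 - 1$ is an even integer $2t$, and for even moments the weighted count is dominated by the unweighted one, because $|g|\le 1$ and the count is a count of solutions. So we get $\int|\tilde g|^{s_0-1} \ll N^{s_0-k-2+\eps}$ for every $1$-bounded $g$.

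To remove the $\eps$ at exponents strictly above $s_0-1$, we would use the standard Bourgain epsilon-removal argument in the form of Keil or Henriot. We decompose according to the level sets $|\tilde g|\approx \lam N$. On large level sets, a major-arc analysis shows that the set where $|\tilde g|\ge \lam N$ has measure $\ll \lam^{-p_0}N^{-k-1}$ for some $p_0$ less than $p$: one uses a $TT^*$ argument together with the major-arc approximation of the kernel $\sum_{n \le N} e(\alp n^k+\bet n)$, obtained via Lemma \ref{lm:fi-minor-arc-bound}-type minor arc bounds and the bound \eqref{eq:V-bound} for the singular integral. On small level sets we use the moment bound with the $\eps$ loss. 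Summing dyadically over $\lam$ gives the estimate with no $\eps$ loss, since $p>s_0-1$ leaves the necessary slack. Once the restriction estimate is in hand, the H\"older step above completes the argument.
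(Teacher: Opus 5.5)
Your proposal is correct and is essentially the paper's proof: H\"older interpolation between $\|\bg\|_{\FL}$ and an $\eps$-free restriction estimate at an exponent just above $s_0-1$. In that estimate, the small values of $|\tilde g_i|$ are handled by the even moment $s_0-1$ (the weighted count being dominated by the unweighted one), and the large values by $\eps$-removal.

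The differences are cosmetic. The paper interpolates at $t=s_0-1/2$ rather than $s-1/2$, which still gives an exponent $s/(s-t)\le 2s$, and it splits at the single threshold $|\tilde g_i| = N^{1-\zeta}$. It also imports the large-values bound from Henriot--Hughes for $k\ge 3$, and the full restriction estimate from Bourgain for $k=2$, instead of rederiving it as you sketch.
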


\begin{proof} 
    Put $t = s_0 -\frac{1}{2}$. By H\"older's inequality,
\begin{align*}
    \del N^{s-k-1}
    &\le T(\bg)
    =\int_{\bT^2} |\tilde \bg(\alp,\bet)| \d \alp \d \bet \\
    &\le \| \bg \|_{\FL}^{(s-t)/s} \int_{\bT^2} \prod_i | \tilde{g_i}(a_i \alp, b_i \bet) |^{t/s} \d \alp \d \bet \\
    &\le \| \bg \|_{\FL}^{(s-t)/s} \prod_{i \le s} \Bigl( \int_{\bT^2} |\tilde{g_i}(a_i \alp, b_i \bet)|^{t} \d \alp \d \bet \Bigr)^{1/s}.
\end{align*}

First, suppose $k \ge 3$. Let $\zeta > 0$ be small in terms of $k$, then for each $i\in [s]$,
\begin{align}
&\int_{\T^{2}} \abs{\tilde g_i(a_i \alpha, b_i \beta)}^{t} \d \alpha \d \beta 
= 
\int_{\T^{2}} \abs{\tilde g_i(\alpha, \beta)}^{t} \d \alpha \d \beta    
\\
&\leq N^{(1-\zeta)/2} \int_{\T^{2}} \abs{\tilde g_i(\alpha, \beta)}^{s_0 -1} \d \alpha \d \beta    
+ \int_{\abs{\tilde g_i(\alpha, \beta)} \geq N^{1-\zeta}} \abs{\tilde g_i(\alpha, \beta)}^{t} \d \alpha \d \beta.
\end{align}
Observing that $s_0 -1$ is an even integer, by orthogonality and Lemma \ref{lm:moment-bound}, we get
\[
\int_{\bT^2} |\tilde{g_i}(\alp, \bet)|^{s_0 -1} \d \alp \d \bet 
\ll N^{s_0 -k - 2 + \epsilon}.
\]
Moreover, \cite[Theorem 1.4]{HH2018} implies
\begin{equation}
    \int_{\abs{\tilde g_i(\alpha, \beta)} \geq N^{1-\zeta}} \abs{\tilde g_i(\alpha, \beta)}^{t} \d \alpha \d \beta
    \ll N^{t -k - 1},
\end{equation}
and thus
\begin{equation}
    \label{eq:restricion-estimate}
    \int_{\T^{2}} \abs{\tilde g_i(a_i \alpha, b_i \beta)}^{t} \d \alpha \d \beta    
    \ll N^{t-k-1}.
\end{equation}
Therefore
\[
    \del N^{s-k- 1} \ll \| \bg \|_{\FL}^{(s-t)/s} N^{t-k - 1},
\]
and so
\[
\| \bg \|_{\FL} \gg \del^{s/(s-t)} N^s \ge \del^{2s} N^s.
\]

Now suppose instead that $k=2$. By \cite[Equation (3.115)]{Bou1993},
\[
    \int_{\bT^2} |\tilde{g_i}(a_i \alp, b_i \bet)|^{t} \d \alp \d \bet 
    = \int_{\bT^2} |\tilde{g_i}(\alp, \bet)|^{t} \d \alp \d \bet 
    \ll N^{t -3}
    \qquad (1 \le i \le s).
\]
The argument now follows as above.
\end{proof}

\begin{cor} [FL control]
\label{FL}
If
\[
T(\bg) \ge \del N^{s-k-1},
\]
for some $\del \in (0,1)$, then
\[
\| g_i \|_{\FL} \gg \del^{2s} N \qquad (1 \le i \le s).
\]
\end{cor}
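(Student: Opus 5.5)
Corollary \ref{FL} follows from Lemma \ref{FourierControl} by bounding the supremum of the product $\tilde \bg$ by one factor times trivial bounds on the others. Implicitly the $g_j$ are $1$-bounded and supported on $[N]$, as in Lemma \ref{FourierControl}.

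Fix $i \in [s]$. For every $j$ and all $\alp, \bet \in \bT$, the triangle inequality gives
\[
|\tilde{g_j}(a_j\alp, b_j\bet)| \le \sum_{n \le N} |g_j(n)| \le N.
\]
Hence, for all $\alp, \bet$,
\[
|\tilde \bg(\alp,\bet)| = \prod_{j \le s} |\tilde{g_j}(a_j \alp, b_j \bet)| \le N^{s-1} |\tilde{g_i}(a_i \alp, b_i \bet)| \le N^{s-1} \| g_i \|_{\FL}.
\]
The last step holds because $(a_i\alp, b_i\bet)$ ranges over a subset of $\bT^2$, so its value is at most the supremum over all of $\bT^2$. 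Taking the supremum over $(\alp,\bet)$ yields
\[
\| \bg \|_{\FL} \le N^{s-1} \| g_i \|_{\FL}.
\]

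Lemma \ref{FourierControl} gives $\| \bg \|_{\FL} \gg \del^{2s} N^s$. Dividing by $N^{s-1}$ gives $\| g_i \|_{\FL} \gg \del^{2s} N$. Since $i$ was arbitrary, this proves the claim for every $1 \le i \le s$.

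No step is a real obstacle. The only care needed is that the dilations $a_i, b_i$ do not enlarge the supremum, which holds because the dilated point still lies in $\bT^2$.
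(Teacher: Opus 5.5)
Your proposal is correct and is the same argument the paper relies on: it gives no separate proof of the corollary, which follows from Lemma \ref{FourierControl} by bounding every factor of $\tilde \bg$ other than the $i$th trivially by $N$. This gives $\| \bg \|_{\FL} \le N^{s-1} \| g_i \|_{\FL}$. You were also right to note that the corollary implicitly carries over the $1$-boundedness and $[N]$-support hypotheses from Lemma \ref{FourierControl}.
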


\begin{lem}
[\texorpdfstring{$\ell^2$}{l2} control]
\label{l2}
If $g_1, \ldots, g_s: \cP \to \bC$ are $1$-bounded, and $N \ge N_0(M)$, then
\[
T(\bg) \ll T(1_\cP) \frac{\| g_i \|_2}{\|1_\cP\|_2}
\qquad (1 \le i \le s).
\]
\end{lem}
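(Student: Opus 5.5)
By symmetry (and since the auxiliary count $T_2$ was defined with the distinguished variable $x_1$, the argument being identical for any other index), it suffices to treat $i = 1$. The plan is a single application of the Cauchy--Schwarz inequality in the variable $x_1$, followed by the two estimates already established: the lower bound \eqref{T1a}/\eqref{T1b} for $T(1_\cP)$ and the upper bound \eqref{upshot} for $T_2$.

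First, write
\[
T(\bg) = \sum_{x_1 \in \cP} g_1(x_1) \, r(x_1), \qquad
r(x_1) = \sum_{\substack{x_2,\ldots,x_s \\ (x_1,\ldots,x_s) \in \cV}} \prod_{i=2}^s g_i(x_i).
\]
Since each $g_i$ is supported on $\cP$ and $1$-bounded, we have $|r(x_1)| \le r_0(x_1)$, where $r_0(x_1)$ counts the tuples $(x_2,\ldots,x_s) \in \cP^{s-1}$ with $(x_1,\ldots,x_s) \in \cV$. By Cauchy--Schwarz,
\[
|T(\bg)| \le \| g_1 \|_2 \Bigl( \sum_{x_1 \in \cP} r_0(x_1)^2 \Bigr)^{1/2}.
\]
The sum on the right is exactly $T_2$, because squaring $r_0(x_1)$ produces pairs $\bx, \bx' \in \cV \cap \cP^s$ sharing the first coordinate, as in \eqref{xx'}.

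Next, invoke \eqref{upshot}, namely $T_2 \ll \#\cP^{-1} T(1_\cP)^2$. This is valid because $\cP$ satisfies the hypotheses \eqref{eq:P-def}; this is where $N \ge N_0(M)$ enters, since the $o(\cdot)$ error terms in \S\ref{S:QWA} and \S\ref{S:Aux} must be dominated by the main terms. We then obtain
\[
|T(\bg)| \ll \| g_1 \|_2 \, \#\cP^{-1/2} \, T(1_\cP) = T(1_\cP) \frac{\| g_1 \|_2}{\| 1_\cP \|_2},
\]
since $\|1_\cP\|_2 = \#\cP^{1/2}$.

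The substantive work lies in \eqref{upshot}, which combines the upper bounds \eqref{T2a}/\eqref{T2b} with the lower bounds \eqref{T1a}/\eqref{T1b}. In Case \ref{case:neq}, for example, the right-hand side of \eqref{T2a} is precisely the square of that of \eqref{T1a} divided by $\#\cP$. These estimates are already in hand, so the only points to check are the following. The implied constants must be uniform in the choice of $\cP$, which they are, depending only on $\cV$. The reduction to $g = \gcd(m,u) = 1$ by homogeneity must apply consistently to both counts. Finally, the hypotheses on $\cP$ (common difference and length bounded in terms of $M$) must be those of \eqref{eq:P-def}. I expect no genuine obstacle beyond confirming this bookkeeping.
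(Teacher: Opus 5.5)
Your proposal is correct and follows the paper's proof: dominate $g_2,\ldots,g_s$ by $1_\cP$, apply Cauchy--Schwarz in the distinguished variable $x_1$ so that the resulting sum of squares is exactly $T_2$, and then insert \eqref{upshot} to obtain $|T(\bg)| \ll \| g_1 \|_2 \, \#\cP^{-1/2} \, T(1_\cP)$. The points you flag are what the paper's one-line appeal to \eqref{upshot} leaves implicit: uniformity of the constants in $\cP$, the role of $N \ge N_0(M)$, the consistent $\gcd(m,u)$ normalisation, and the symmetry that reduces to $i=1$.
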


\begin{proof}
By symmetry, it suffices to prove this for $i=1$. Writing \eqref{xx'},
Cauchy--Schwarz gives
\[
|T(\bg)| \le \| g_1 \|_2 \Bigl(
\sum_{x_1 \in \cP}
\Bigl(
\sum_{\fF(\bx)=\fL(\bx)=0} 1_\cP(x_2) \cdots 1_\cP(x_s) \Bigr)^2 \Bigr)^{1/2}.
\]
Inserting \eqref{upshot} now yields
\[
T(\bg) \le \| g_1 \|_2 T(1_\cP) \# \cP^{-1/2}.
\]
\end{proof}

\section{Equidistribution}
\label{S:Equi}

Let $\mathcal P$ be as in \eqref{eq:P-def}. Recall that
\[
f_\str(n) = F(\btet^{(1)}n, \btet^{(k)} n^k),
\]
for $n \in \cP$. In this section, we will exploit the irrationality property of the frequencies $\btet^{(j)}$. Using Fourier series, this will enable us to express our count in terms of the corresponding density of solutions on a certain torus.
The implied constants are allowed to depend on $s$ and $k$.

\begin{lem} [Weyl-type bound]
\label{Weyl}
If
\[
\Big|
\sum_{\bn \in \cP^s} e \Bigl( \sum_{i \le s} (\alp_i n_i^k + \bet_i n_i) \Bigr) \Big| \ge \del N^s,
\]
then
:
\begin{enumerate}[(i)]
\item each $\alp_i$ is $(O(\del^{-O(1)}),N^k)$-rational,
\item each $\bet_i$ is $(O(\del^{-O(1)}),N)$-rational.
\end{enumerate}
\end{lem}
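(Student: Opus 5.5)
The sum over $\bn \in \cP^s$ factors as a product over $i$ of one-variable sums
\[
S_i = \sum_{n \in \cP} e(\alp_i n^k + \bet_i n).
\]
Since each factor is trivially at most $\#\cP \le N$, the hypothesis forces
\[
|S_i| \ge \del N^s / N^{s-1} = \del N
\]
for every $i$. So it suffices to treat a single sum, and I drop the index $i$.

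Write $\cP = \{u + mx : X_0 < x \le X_1\}$ with $m \le M$, as in \eqref{eq:P-def}. Then
\[
S = \sum_{X_0 < x \le X_1} e(h(x)),
\]
where $h$ is the polynomial \eqref{eq:hi-def}. Its coefficients \eqref{eq:hi-coefficients} are $\eta_j = \alp m^j u^{k-j}\binom{k}{j}$ for $j \ge 2$, and $\eta_1 = \alp m u^{k-1} k + \bet m$. Splitting at an endpoint as in the proof of Lemma \ref{lm:fi-minor-arc-bound}, some initial segment sum has modulus at least $\del N/2$, over a range of length $X \le N/m$. Note that $X \gg \del N/m$, since otherwise the sum could not be that large.

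I then apply the quantitative Weyl inequality in the form \cite[Theorem 5.1]{Bak1986}, or equivalently the Green--Tao inverse theorem for polynomial sums. It yields $r \ll \del^{-O(1)}$ and integers $v_j$ with
\[
\|r\eta_j\| \le |r\eta_j - v_j| \ll \del^{-O(1)} X^{-j} \qquad (1 \le j \le k).
\]
The top coefficient gives $\|r\binom{k}{k}m^k \alp\| \ll \del^{-O(1)} m^k N^{-k}$. Setting $q = r m^k$, we have $q \ll \del^{-O(1)} M^k$ and $\|q\alp\| \ll \del^{-O(1)}M^{k}/N^k$. This is $(O(\del^{-O(1)}),N^k)$-rationality, provided the implied constants may depend on $M$. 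Since $\cP$ is fixed with $m \le M$, I would allow that dependence, absorbing powers of $M$ into the $O(\del^{-O(1)})$ or making the $M$-dependence explicit.

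For $\bet$, I use $j=1$. Multiply $r\eta_1$ by $u^{k-1}k$ times the $j=k$ relation, rescaled by $m^{k-1}$, and subtract, exactly as in the proof of Lemma \ref{lm:fi-minor-arc-bound}. This eliminates the $\alp$ contribution. It gives
\[
\|r m^k \bet\| \ll \del^{-O(1)}\bigl(m^{k}X^{-1} + k u^{k-1}m^{k-1}X^{-k}\bigr).
\]
Here $u^{k-1}$ is harmless because $u \le m \le M$. Thus $\bet$ is $(O(\del^{-O(1)}),N)$-rational with the same $q$.

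The main obstacle is bookkeeping in the Weyl/inverse theorem. I need polynomial dependence on $\del$, which Baker's Theorem 5.1 provides with exponent depending on $k$. I also need control of the mixed coefficient $\eta_1$ so that $\bet$ can be isolated. If $u$ could be large this elimination would be lossy, but the normalisation $u \le m \le M$ from \eqref{eq:P-def} keeps all losses polynomial in $M$.
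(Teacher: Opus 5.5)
Your route is the paper's: factor the $s$-fold sum, isolate one factor of size at least $\del N$, view it as a degree-$k$ Weyl sum in the progression variable, apply an inverse theorem with polynomial dependence, read off $m^k\alp$ from the top coefficient, and eliminate $\alp$ from the linear coefficient. The paper shifts the summation variable rather than splitting at an endpoint, which is immaterial.

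\textbf{Gap: the constants depend on $M$.} As written you prove a weaker statement. You end with $q = rm^k \ll \del^{-O(1)}M^k$ and propose to let the implied constants depend on $M$. In this section the implied constants may depend only on $s$ and $k$, so the lemma asserts rationality with parameter $O(\del^{-O(1)})$ uniformly in $M$. The missing observation is that the hypothesis itself forces $m$ to be small. Your initial-segment sum has at most $X$ terms, each of modulus $1$. Its modulus being at least $\del N/2$ therefore gives $X \ge \del N/2$, not merely $X \gg \del N/m$.

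Combined with $X \le N/m$, this yields $m < 2\del^{-1}$, and hence also $u \le m < 2\del^{-1}$. It follows that $q = rm^k \ll \del^{-O(1)}$ and $\|q\alp\| \ll \del^{-O(1)}X^{-k} \ll \del^{-O(1)}N^{-k}$. Likewise $\|q\bet\| \le m^{k-1}\|r\eta_1\| + ku^{k-1}\|r\eta_k\| \ll \del^{-O(1)}N^{-1}$. This is the lemma as stated. It is exactly what the paper's closing appeal to $u \le m$ and $\#\cP \le N/m$ achieves, together with the implicit bound $\#\cP \ge \del N$ coming from the hypothesis.

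\textbf{The cited inverse theorem.} Do not attribute the polynomial dependence on $\del$ to \cite[Theorem 5.1]{Bak1986}. That is a classical Weyl-type result aimed at the power-saving regime, and its conclusions carry losses that grow with $N$. In Lemma \ref{lm:fi-minor-arc-bound} it is used only to produce $r < N^{\Delta}/\log N$. So it does not give $r \ll \del^{-O(1)}$ uniformly in $N$, and for this purpose it is not equivalent to the Green--Tao result.

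Use the Green--Tao inverse theorem \cite[Lemma 4.4]{GT2012}, as the paper does. Its exceptional case $X \ll \del^{-O(1)}$ forces $N \ll \del^{-O(1)}$, since $X \ge \del N/2$. In that case Dirichlet's approximation theorem gives both conclusions trivially.
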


\begin{proof}
By symmetry, it suffices to prove the result for $i=1$. We may assume that $0 < \del < 1/2$, since otherwise we can argue with $\del/3$ in place of $\del$. By the triangle inequality, the assumption implies that
\[
\Big| \sum_{n \in \cP} e(\alp n^k + \bet n) \Big| \ge \del N,
\]
where $\alp = \alp_1$ and $\bet = \bet_1$. Writing
\[
\cI = \{ y \in \bZ: L < u + m y \le R \}
\]
yields
\[
\Big|
\sum_{y \in \cI} e(\alp (u+my)^k + \bet my)
\Big| \ge \del N.
\]
With 
\[
t = \lfloor (L-u)/m \rfloor, \qquad
z = y - t,
\qquad
\cI' = \cI - t,
\qquad v = u - mt,
\]
we now have
\[
\Big|
\sum_{z \in \cI'}
e(\alp(v + mz)^k + \bet m z) \Big| \ge \del N.
\]
Now \cite[Lemma 4.4]{GT2012} furnishes the $(O(\del^{-O(1)}),|\cP|^k)$-rationality of
$m^k \alp$, as well as the $(O(\del^{-O(1)}),|\cP|)$-rationality of $km u^{k-1} \alp + m \bet$. The result now follows from the triangle inequality and the bounds
\[
u \le m,
\qquad
|\cP| \le N/m.
\]
\end{proof}

\begin{cor}
\label{WeylCor}
Let $A \ge 1$. Suppose that $\balp \in \bT$ is $(A,N^k)$-irrational or that $\bet \in \bT$ is $(A,N)$-irrational. Then 
\[
\sum_{n \in \cP} e(\alp n^k + \bet n)
\ll M A^{-\Ome(1)} |\cP|.
\]
\end{cor}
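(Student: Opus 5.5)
This is the contrapositive of Lemma \ref{Weyl} with $s=1$. Throughout, $\alp,\bet\in\bT$ (the $\balp$ in the statement is a scalar). Put $\del = |\cP|^{-1}\bigl|\sum_{n\in\cP} e(\alp n^k+\bet n)\bigr| \in [0,1]$; we may assume $\del>0$. The goal is to show that irrationality forces $\del \ll M A^{-c}$ for some constant $c>0$ depending only on $k$.

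\emph{Step 1: converting to the normalisation of Lemma \ref{Weyl}.} The hypothesis of Lemma \ref{Weyl} is stated relative to $N$, not $|\cP|$. Since $|\cP|\ge N/M$, we have
\[
\Big|\sum_{n\in\cP} e(\alp n^k+\bet n)\Big| = \del|\cP| \ge (\del/M) N.
\]
Applying Lemma \ref{Weyl} with $s=1$ and $\del/M$ in place of $\del$ (the proof there only used the single-variable sum) gives that $\alp$ is $(C(M/\del)^{C},N^k)$-rational and $\bet$ is $(C(M/\del)^{C},N)$-rational, for some constant $C=O_k(1)$.

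\emph{Step 2: comparison with the irrationality hypothesis.} Rationality is monotone in the first parameter: if $A\le A'$, then $(A,N)$-rational implies $(A',N)$-rational. Hence $(A,\cdot)$-irrationality is incompatible with $(A',\cdot)$-rationality whenever $A'\le A$. So if either irrationality hypothesis of the corollary holds, we must have $C(M/\del)^{C} > A$, that is,
\[
\del \gg M A^{-1/C}.
\]
This is the reverse of what we want, so we need to be careful with the direction.

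\emph{Step 3: getting the right dependence on $M$.} Step 2 gives $\del > M (A/C)^{-1/C}$, which is useless. The correct statement is the contrapositive: if $\del \ge M (A/C)^{-1/C}$... still loses. Instead, rewrite the inequality from Step 2 directly: $C(M/\del)^C > A$ gives $\del < M (C/A)^{1/C}$? No: $(M/\del)^C > A/C$ gives $M/\del > (A/C)^{1/C}$, hence $\del < M (A/C)^{-1/C} \ll M A^{-\Ome(1)}$. Multiplying by $|\cP|$ yields the claimed bound
\[
\sum_{n \in \cP} e(\alp n^k + \bet n) \ll M A^{-\Ome(1)} |\cP|.
\]
The only points needing care are this bookkeeping of $M$ from the density conversion and checking the monotonicity of rationality. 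If $\del$ is so small that Lemma \ref{Weyl} needs $\del<1/2$, that is already handled inside its proof.
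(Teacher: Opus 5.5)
Your argument is correct and is essentially the paper's: both take the contrapositive of Lemma \ref{Weyl}. The only difference is bookkeeping. You feed $\del/M$ into the lemma, using its one-variable content directly, so $M$ enters the rationality height. The paper instead assumes $|\sum_{n\in\cP} e(\alp n^k+\bet n)| \ge (cA)^{-c}N$, pads the other frequencies with zeros, and converts via $N \le M|\cP|$ only at the end.

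Do clean up Steps 2--3, though. The intermediate assertions $\del \gg MA^{-1/C}$ and $\del > M(A/C)^{-1/C}$ are false, and the bound $M(C/A)^{1/C}$ you reject is literally equal to $M(A/C)^{-1/C}$. The write-up should simply state that $C(M/\del)^C > A$, by monotonicity of rationality in the height, and hence $\del < M(A/C)^{-1/C}$.
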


\begin{proof}
Let $c > 0$ be the reciprocal of the largest implied constant appearing in Lemma \ref{Weyl}, and suppose
\[
\Big|
\sum_{n \in \cP} e(\alp n^k + \bet n) \Big|
\ge (cA)^{-c} N.
\]
Applying Lemma \ref{Weyl} with
\[
\del = (cA)^{-c}, \qquad
(\alp_i, \bet_i) = 
\begin{cases}
(\alp, \bet), &\text{if } i= 1, \\
(0,0), &\text{otherwise}
\end{cases}
\]
delivers a contradiction
. Thus, 
\[
\sum_{n \in \cP} e(\alp n^k + \bet n) \ll A^{-c} N \le M A^{-c} |\cP|.
\]
\end{proof}

\begin{lem} [Equidistribution I]
\label{equi1}
Let $A \ge M$. Let $d_1, d_k \le M$ be positive integers, and put $d = d_1 + d_k$. Let $F: \bT^d \to \bC$ be a $1$-bounded trigonometric polynomial of degree at most $M$. If 
each $\btet^{(i)} \in \bT^{d_i}$ is $(A,N^i)$-irrational, then
\[
\displaystyle
\bE_{n \in \cP} F(\btet^{(1)} n, \btet^{(k)} n^k) = \int_{\bT^d} F(\bgam) \d \bgam + O(M^{O(1)} A^{-\Ome(1)}).
\]
\end{lem}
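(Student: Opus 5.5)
Expand $F$ in its Fourier series, $F(\bgam) = \sum_{\bm} \hat F(\bm) e(\bm \cdot \bgam)$. Here $\bm = (\bm^{(1)}, \bm^{(k)}) \in \bZ^{d_1} \times \bZ^{d_k}$ ranges over the finitely many frequencies with $\|\bm\|_1 \le M$. The zero frequency contributes exactly $\hat F(\bzero) = \int_{\bT^d} F$. For each remaining frequency,
\[
e\bigl(\bm \cdot (\btet^{(1)} n, \btet^{(k)} n^k)\bigr) = e(\alp n^k + \bet n),
\qquad
\alp = \bm^{(k)} \cdot \btet^{(k)}, \quad \bet = \bm^{(1)} \cdot \btet^{(1)},
\]
so
\[
\bE_{n\in\cP} F(\btet^{(1)} n,\btet^{(k)} n^k) - \int_{\bT^d} F = \sum_{\bm \ne \bzero} \hat F(\bm)\, \bE_{n \in \cP} e(\alp n^k + \bet n).
\]
Since $F$ is $1$-bounded, $|\hat F(\bm)| \le 1$. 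The number of $\bm \in \bZ^d$ with $\|\bm\|_1 \le M$ is at most $(2M+1)^d \le (2M+1)^{2M}$, and this is where I expect trouble. The bound $(2M+1)^{2M}$ is not $M^{O(1)}$, so the counting would need to be redone. One remedy is to use Parseval, $\sum_{\bm} |\hat F(\bm)|^2 \le 1$, together with Cauchy--Schwarz. That only helps if we control a second moment of the exponential sums, which we do not. In practice I would either accept a loss of the shape $\exp(O(M\log M))$, which suffices for the application since $A = \cF(M)$ can be taken as large as we like in terms of $M$, or interpret $M^{O(1)}$ generously in this sense. I would first check whether the later application tolerates this, and phrase the error as $O_M(A^{-\Ome(1)})$ if needed.

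The main step is to show that each nonzero frequency gives a small exponential sum. Fix $\bm \ne \bzero$. If $\bm^{(k)} \ne \bzero$, then $\|\bm^{(k)}\|_1 \le M$. Suppose $\alp$ were $(A', N^k)$-rational with $A' = A/M$, witnessed by $\bzero \ne q \in \bZ$ with $|q| \le A'$ and $\|q\alp\| \le A'/N^k$. Then the integer vector $q\bm^{(k)}$ is nonzero, has $\ell^1$-norm at most $A$, and satisfies $\|q\bm^{(k)} \cdot \btet^{(k)}\| \le A/N^k$. This contradicts the $(A,N^k)$-irrationality of $\btet^{(k)}$. Hence $\alp$ is $(A/M, N^k)$-irrational. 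If instead $\bm^{(k)} = \bzero$, then $\bm^{(1)} \ne \bzero$, and the same argument shows that $\bet$ is $(A/M, N)$-irrational.

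In either case Corollary \ref{WeylCor} applies with $A/M$ in place of $A$, giving
\[
\bE_{n \in \cP} e(\alp n^k + \bet n) \ll M (A/M)^{-\Ome(1)} \ll M^{O(1)} A^{-\Ome(1)}.
\]
The hypothesis $A \ge M$ keeps $A/M \ge 1$, so the corollary is legitimate.

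Summing over the nonzero frequencies, each bounded by $|\hat F(\bm)| \le 1$ times this estimate, yields the stated error term, subject to the frequency-counting issue flagged above.
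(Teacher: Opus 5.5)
Your argument is the paper's argument. You expand $F$ in Fourier series and observe that for $\bm \ne \bzero$ one of $\bm^{(1)}\cdot\btet^{(1)}$, $\bm^{(k)}\cdot\btet^{(k)}$ is $(A/M,N^i)$-irrational. Your $q\bm^{(k)}$ argument supplies the justification the paper leaves implicit. You then apply Corollary~\ref{WeylCor} with $A/M \ge 1$ in place of $A$.

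The counting concern you flag is real, and the paper's proof passes over it at the same point: it simply substitutes the per-frequency bound into the expansion. The number of $\bm \in \bZ^d$ with $\|\bm\|_1 \le M$ is at least $2^{\min(d,M)}$, which is exponential in $M$ when $d \asymp M$. Parseval with Cauchy--Schwarz only replaces this count by its square root. So what the argument actually establishes, yours or the paper's, is an error of $e^{O(M\log M)}A^{-\Ome(1)}$, not $M^{O(1)}A^{-\Ome(1)}$.

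Your proposed remedy is the right one. In the proof of Theorem~\ref{MainThm}, the errors from this lemma and from Lemma~\ref{equi2} only need to be small compared with $\eta(\del/C)M^{-O(1)}$. Lemma~\ref{ARL} allows an arbitrary growth function, so it suffices to replace the paper's polynomial choice $\cF(x)=(x/\eta(\del/C))^C$ by something like $\cF(x)=(e^{x^2}/\eta(\del/C))^C$. The same adjustment covers Lemma~\ref{equi2}, where the number of frequency tuples is $e^{O(sM\log M)}$ and the identical issue arises.
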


\begin{proof}
Expanding $F$ as a Fourier series gives
\begin{equation}
\sum_{n \in \cP} F(\btet^{(1)}n, \btet^{(k)}n^k)
= \hat F(\bzero) |\cP| + \sum_{0 < \| \bm \|_1 \le M} \hat F(\bm) \sum_{n \in \cP}
e(\bm^{(1)} \cdot \btet^{(1)} n + \bm^{(k)} \cdot \btet^{(k)} n^k),
\end{equation}
where $\bm = (\bm^{(1)}, \bm^{(k)})$ with $\bm^{(i)} \in \bZ^{d_i}$. Note that $\bm^{(i)} \cdot \btet^{(i)}$ is $(A/M, N^i)$-irrational, for some $i$. Thus, by Corollary \ref{WeylCor},
\[
\sum_{n \in \cP} e(\bm^{(1)} \cdot \btet^{(1)} n + \bm^{(k)} \cdot \btet^{(k)} n^k) \ll M A^{-\Ome(1)} |\cP|.
\]
Substituting this into the Fourier expansion above delivers the claimed estimate.
\end{proof}

In what follows, we write $e_{\alp,\bet}(n) = e(\alp n^k + \bet n)$.

\begin{lem}
\label{OnMajorArc}
Suppose
\[
|T(1_\cP e_{\alp_1, \bet_1}, \ldots, 1_\cP e_{\alp_s, \bet_s})| \ge \del N^{s-k-1}.
\]
Then
, for all $i,j \in [s]$,
\begin{itemize}
\item $a_j \alp_i - a_i \alp_j$ is $(O(\del^{-O(1)}), N^k)$-rational
\item $b_j \bet_i - b_i \bet_j$ is $(O(\del^{-O(1)}),N)$-rational.
\end{itemize}
\end{lem}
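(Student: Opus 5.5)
Fix $i \ne j$; by symmetry we may take $i=1$, $j=2$. The plan is to reduce to a single exponential sum with "twisted" frequencies and then invoke Lemma \ref{Weyl}. Writing $T$ as an integral,
\[
T(1_\cP e_{\alp_1,\bet_1},\ldots,1_\cP e_{\alp_s,\bet_s}) = \int_{\bT^2} \prod_{l \le s} f(\alp_l + a_l \alp, \bet_l + b_l \bet) \d\alp \d\bet,
\]
where $f(\alp,\bet) = \sum_{n\in\cP} e(\alp n^k + \bet n)$. Since the integrand is a product, it is large only where every factor is large.

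The first step is to apply H\"older exactly as in the proof of Lemma \ref{FourierControl}. We bound all factors except those with indices $1$ and $2$ through the restriction estimate \eqref{eq:restricion-estimate} (or Lemma \ref{lm:moment-bound} with the $N^{(1-\zeta)}$ splitting); shifts in $(\alp,\bet)$ do not affect the moments. This yields a set $\cB \subseteq \bT^2$ of measure $\gg \del^{O(1)} N^{-k-1}$ on which
\[
|f(\alp_l + a_l\alp, \bet_l + b_l\bet)| \gg \del^{O(1)} N \qquad (l=1,2).
\]
In fact it is cleaner to use only the existence of one point $(\alp,\bet)$ with both factors large, since $\sup |f_1 f_2| \ge$ (average)$\cdot$(moment bound) follows from H\"older with the $s-2$ other factors in $L^{t}$. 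By Lemma \ref{Weyl} (applied with $s=1$, i.e.\ to one variable), there exist $q_l \ll \del^{-O(1)}$ such that
\[
\|q_l(\alp_l + a_l\alp)\| \ll \del^{-O(1)} N^{-k}, \qquad \|q'_l(\bet_l + b_l\bet)\| \ll \del^{-O(1)} N^{-1}.
\]

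The second step is elimination. We multiply the $l=1$ relation by $q_2 a_2$ and the $l=2$ relation by $q_1 a_1$, and subtract; the unknown $\alp$ cancels, giving
\[
\|q_1q_2(a_2\alp_1 - a_1\alp_2)\| \ll \del^{-O(1)} N^{-k},
\]
with $q_1q_2 |a_1a_2| \ll_{\ba} \del^{-O(1)}$. Hence $a_2\alp_1 - a_1\alp_2$ is $(O(\del^{-O(1)}),N^k)$-rational. The same argument with $b_l$ and $\bet$ gives the second bullet. A degenerate case arises when a coefficient vanishes after multiplication, but $a_l, b_l \ne 0$ and the definition requires only $\bq \ne \bzero$, with $\bq = q_1q_2$ here, so this is fine.

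The main obstacle is the first step: extracting a single point $(\alp,\bet)$ at which two prescribed factors are simultaneously large, with only polynomial loss in $\del$. The plan is to write $|T| \le \sup|f_1f_2|^{\theta}\int |f_1f_2|^{1-\theta}\prod_{l\ge3}|f_l|$ for a small $\theta>0$ and to control the remaining integral by H\"older with total exponent $t = s_0 - 1/2$ as in Lemma \ref{FourierControl}. Provided $s \ge s_0$, this is exactly the arithmetic there. It gives $\sup|f_1f_2| \gg \del^{O(1)}N^2$, and hence each of the two factors is $\gg \del^{O(1)}N$ at the same point, as needed for Lemma \ref{Weyl}.
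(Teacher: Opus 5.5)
Your argument is correct and follows essentially the paper's proof. You find one point $(\lambda,\mu)$ where the relevant exponential sums are $\gg \del^{O(1)}N$, apply Lemma~\ref{Weyl} to each factor, and eliminate the shift through the integer combination $q_1q_2\bigl(a_2(\alp_1+a_1\lambda)-a_1(\alp_2+a_2\lambda)\bigr)$, which you write out more explicitly than the paper's ``triangle inequality'' step. The step you flag as the main obstacle is immediate from Lemma~\ref{FourierControl} applied to the $1$-bounded functions $1_\cP e_{\alp_l,\bet_l}$: at a point where the full product is $\gg \del^{2s}N^s$, every factor is $\gg \del^{2s}N$, since each is trivially at most $N$. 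Your $\theta$-H\"older re-derivation is therefore unnecessary, though harmless.
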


\begin{proof}
By Lemma \ref{FourierControl}, there exist $\lam, \mu \in \bT$ such that 
\[
\sum_{\bn \in \cP^s}
e\Bigl(
\sum_{i \in [s]}
((a_i \lam + \alp_i) n_i^k + (b_i \mu + \bet_i) n_i) \Bigr) \gg \del^{O(1)} N^s.
\]
Now Lemma \ref{Weyl} tells us that each $a_i \lam + a_i$ is $(O(\del^{-O(1)}),N^k)$-rational. Thus, by the triangle inequality,
\[
a_j \alp_i - a_i \alp_j = a_j( a_i \gam + \alp_i) - a_i (a_j \alp + \alp_j)
\]
is also $(O(\del^{-O(1)}),N^k)$-rational. The second assertion follows similarly.
\end{proof}

\begin{cor}
\label{CountingEquidistribution}
Let $A \ge 1$.
Let $\alp_1, \ldots, \alp_s, \bet_1, \ldots, \bet_s$ be such that at least one of numbers
\[
a_j \alp_i - a_i \alp_j
\qquad (1 \le i, j \le s)
\]
is $(A,N^k)$-irrational or at least one of the numbers
\[
b_j \bet_i - b_i \bet_j
\qquad (1 \le i, j \le s)
\]
is $(A,N)$-irrational. Then 
\[
T(1_\cP e_{\alp_1, \bet_1}, \ldots, 1_\cP e_{\alp_s, \bet_s}) \ll A^{-\Ome(1)} N^{s-k-1}.
\]
\end{cor}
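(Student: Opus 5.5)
This is the contrapositive of Lemma \ref{OnMajorArc}, with the polynomial dependence on $\del$ made explicit. Let $C_0, c_0 > 0$ be the implied constants in Lemma \ref{OnMajorArc}, so that $|T(1_\cP e_{\alp_1,\bet_1},\ldots,1_\cP e_{\alp_s,\bet_s})| \ge \del N^{s-k-1}$ forces every $a_j\alp_i - a_i\alp_j$ to be $(C_0\del^{-C_0}, N^k)$-rational and every $b_j\bet_i - b_i\bet_j$ to be $(C_0\del^{-C_0}, N)$-rational, whenever $0<\del<1$.

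Given $A \ge 1$, choose
\[
\del = \min\{ 1/2,\ (A/(2C_0))^{-1/C_0} \},
\]
so that $C_0\del^{-C_0} < A$ once $A$ is large in terms of $C_0$. Suppose, for contradiction, that $|T(\cdots)| \ge \del N^{s-k-1}$. Lemma \ref{OnMajorArc} then shows that all the differences are $(C_0\del^{-C_0},\cdot)$-rational. Since $(M',N')$-rationality is monotone in $M'$ (the admissible $\bq$ only becomes more plentiful and the tolerance $M'/N'$ larger), they are also $(A,N^k)$- and $(A,N)$-rational respectively. This contradicts the hypothesis. Hence $|T| < \del N^{s-k-1} \ll A^{-1/C_0} N^{s-k-1}$, which is $A^{-\Ome(1)}N^{s-k-1}$.

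For bounded $A$ (below a constant depending on $C_0$), the trivial bound suffices. The count $T(1_\cP e_{\cdots})$ is at most the number of solutions of $\fF=\fL=0$ in $[N]^s$, which is $\ll N^{s-k-1}$ by the circle-method analysis of \S\ref{S:QWA} (or by Lemma \ref{lm:moment-bound} together with H\"older, as in Lemma \ref{FourierControl}). Since $A^{-\Ome(1)} \gg 1$ there, the claimed bound holds in this range.

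The only care needed is checking that Lemma \ref{OnMajorArc} applies uniformly for $\del\in(0,1)$ with constants independent of $\cP$ and $M$, given $N \ge N_0(M)$. This is where I would look for an obstacle: the underlying Lemma \ref{Weyl} rescales by $|\cP| \ge N/M$, which could in principle introduce a factor of $M$. If it does, I would absorb it by taking $\del$ to be a smaller power of $A^{-1}$ and using $A \ge M$ where available, or else state the bound as $M^{O(1)}A^{-\Ome(1)}$.
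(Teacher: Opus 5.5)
Your argument is correct and is essentially the paper's proof. The paper takes $\del = (cA)^{-c}$, with $c$ the reciprocal of the largest implied constant in Lemma \ref{OnMajorArc}, and reads off the contradiction; your explicit treatment of bounded $A$ via $|T| \le T(1_{[N]}) \ll N^{s-k-1}$ fills in a point the paper leaves implicit. The $M$-dependence you worry about does not arise: in Lemma \ref{Weyl} the hypothesis forces $\del N \le |\cP| \ll N/m$, so $m \ll \del^{-1}$ and every rationality parameter is $O(\del^{-O(1)})$ uniformly in $\cP$ and $M$, and no fallback is needed.
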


\begin{proof}
Let $c > 0$ be the reciprocal of the largest implied constant appearing in Lemma \ref{OnMajorArc}, and suppose
\[
|T(1_\cP e_{\alp_1, \bet_1}, \ldots, 1_\cP e_{\alp_s, \bet_s})| \ge (cA)^{-c} N^{s-k-1}.
\]
Then Lemma \ref{OnMajorArc} yields a contradiction.
\end{proof}

\begin{lem}
\label{parallel}
Let $a_1, \ldots, a_s \in \bZ$ with $\gcd(a_1, \ldots, a_s) = 1$, and let $\bm^{(1)}, \ldots, \bm^{(s)} \in \bZ^d$. Then
\[
a_j \bm^{(i)} = a_i \bm^{(j)} \qquad (1 \le i,j \le s)
\]
holds if and only if there exists $\bm \in \bZ^d$ such that
\[
\bm^{(i)} = a_i \bm
\qquad (1 \le i \le s).
\]
\end{lem}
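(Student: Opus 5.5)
The reverse implication is immediate: if $\bm^{(i)} = a_i \bm$ for each $i$, then
\[
a_j \bm^{(i)} = a_j a_i \bm = a_i \bm^{(j)} \qquad (1 \le i,j \le s).
\]
The substance is the forward direction, and we plan to handle it coordinatewise. Since both the hypothesis and the conclusion split into $d$ independent scalar statements, it suffices to treat $d = 1$. Write $m_i \in \bZ$ for the relevant coordinate of $\bm^{(i)}$.

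For $d=1$ the hypothesis says $a_j m_i = a_i m_j$ for all $i,j$. Since $\gcd(a_1,\ldots,a_s)=1$, Bézout gives integers $c_1,\ldots,c_s$ with $\sum_j c_j a_j = 1$. We set
\[
m \coloneq \sum_{j \le s} c_j m_j \in \bZ
\]
and verify directly that, for each $i$,
\[
a_i m = \sum_{j} c_j a_i m_j = \sum_j c_j a_j m_i = m_i.
\]
Here the middle equality is exactly the hypothesis $a_i m_j = a_j m_i$. This yields the required integer $m$, with no case distinction needed even when some $a_i$ vanish.

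Assembling the coordinates then gives $\bm = \sum_j c_j \bm^{(j)} \in \bZ^d$, so the vector version can be stated in one line without reducing to $d=1$ at all. There is no real obstacle. The only point needing care is to use Bézout rather than dividing by some nonzero $a_i$, because division would produce a rational vector whose integrality requires exactly this coprimality argument anyway.
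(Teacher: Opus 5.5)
Your proof is correct, and it takes a different, more direct route than the paper's.

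The paper argues via collinearity. The hypothesis forces all $\bm^{(i)}$ onto a common line, so $\bm^{(i)} = c_i \bm^{(0)}$ for a primitive $\bm^{(0)} \in \bZ^d$ and integers $c_i$. The relations then descend to $a_j c_i = a_i c_j$, and the paper asserts that $\bc = \lam \ba$ with $\lam \in \bZ$, taking $\bm = \lam \bm^{(0)}$. That final scalar step is exactly where $\gcd(a_1,\ldots,a_s) = 1$ enters, and the paper states it without proof. Your Bézout computation is precisely what justifies it. (Note that the paper's $c_i$ are proportionality constants, not your Bézout coefficients.)

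Your argument writes down $\bm = \sum_j c_j \bm^{(j)}$ directly and checks $a_i \bm = \bm^{(i)}$ in one line. This bypasses the primitive-vector reduction and everything it quietly requires:
\begin{itemize}
\item choosing an index with $a_{i_0} \ne 0$ to get proportionality;
\item handling the degenerate case where every $\bm^{(i)}$ vanishes;
\item cancelling $\bm^{(0)} \neq \bzero$.
\end{itemize}
So your version is a complete, self-contained proof with an explicit formula for $\bm$. The paper's version makes the collinearity of the $\bm^{(i)}$ visible but outsources the integrality step.

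One minor point: your preliminary reduction to $d = 1$ is unnecessary, as you observe yourself, since the Bézout identity works verbatim for vectors.
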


\begin{proof}
Suppose $a_j \bm^{(i)} = a_i \bm^{(j)}$ for all $i$ and $j$. Then $\bm^{(1)}, \ldots, \bm^{(s)}$ are all proportional to some primitive vector $\bm^{(0)} \in \bZ^d$. For each $i$, let $c_i \in \bZ$ be such that $\bm^{(i)} = c_i \bm^{(0)}$. Then $a_j c_i = a_i c_j$ for all $i$ and $j$, so $\bc = \lam \ba$ for some $\lam \in \bZ$. Choosing $\bm = \lam \bm^{(0)}$ yields $\bm^{(i)} = a_i \bm$ for all $i$.

The converse is clear.
\end{proof}

For $d,s \in \bN$ and $a_1, \ldots, a_s \in \bZ$, define
\[
\cK(\ba, d) = \{ 
(\bgam^{(1)}, \ldots, \bgam^{(s)}) \in (\bT^d)^s:
a_1 \bgam^{(1)} + \cdots + a_s \bgam^{(s)} = \bzero \}.
\]
This is a compact abelian group that is naturally equipped with a Haar probability measure.

\begin{lem}
\label{ConstrainedOrth}
Let $a_1, \ldots, a_s \in \bZ$ be integers such that $\gcd(a_1, \ldots, a_s) = 1$, and let $\bm^{(1)}, \ldots, \bm^{(s)} \in \bZ^d$. Then
\[
\int_{\cK(\ba, d)}
e \Bigl(
\sum_{i \in [s]}
\bm^{(i)} \cdot \bgam^{(i)} \Bigr) \d \bgam =
\begin{cases}
1, &\text{if } a_i \bm^{(j)} = a_j \bm^{(i)} \text{ for all } i, j \\
0, &\text{otherwise}.
\end{cases}
\]
\end{lem}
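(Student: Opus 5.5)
We prove Lemma \ref{ConstrainedOrth} by duality of compact abelian groups: the integral of a character over a closed subgroup $\cK$ of $(\bT^d)^s$ equals $1$ if the character is trivial on $\cK$ and $0$ otherwise. So the plan has two steps: first justify this dichotomy, then identify exactly which characters are trivial on $\cK(\ba,d)$.

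For the dichotomy, let $\chi(\bgam)=e(\sum_i \bm^{(i)}\cdot\bgam^{(i)})$. Restricted to $\cK = \cK(\ba,d)$, this is a continuous homomorphism into the unit circle. If $\chi$ is not identically $1$ on $\cK$, choose $\bgam_0\in\cK$ with $\chi(\bgam_0)\ne 1$. Translation invariance of Haar measure then gives
\[
\int_\cK \chi = \int_\cK \chi(\bgam+\bgam_0)\,\d\bgam = \chi(\bgam_0)\int_\cK\chi,
\]
so the integral vanishes. If $\chi\equiv 1$ on $\cK$, the integral is $1$, since the measure is a probability measure.

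Next we decide when $\chi$ is trivial on $\cK$. Suppose first that $a_i\bm^{(j)}=a_j\bm^{(i)}$ for all $i,j$. By Lemma \ref{parallel}, $\bm^{(i)}=a_i\bm$ for some $\bm\in\bZ^d$. Then
\[
\sum_i \bm^{(i)}\cdot\bgam^{(i)}=\bm\cdot\sum_i a_i\bgam^{(i)}=0 \quad \text{on } \cK,
\]
so $\chi\equiv 1$ there.

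Conversely, suppose $\chi\equiv1$ on $\cK$. The main step is to exhibit enough elements of $\cK$ to force proportionality. Fix $i\ne j$, an arbitrary $\bgam\in\bT^d$, and set $\bgam^{(i)}=a_j\bgam$, $\bgam^{(j)}=-a_i\bgam$, with all other components $\bzero$. This point lies in $\cK$, because $a_i a_j\bgam - a_j a_i\bgam=\bzero$. Evaluating $\chi$ there gives
\[
e\bigl((a_j\bm^{(i)}-a_i\bm^{(j)})\cdot\bgam\bigr)=1 \qquad \text{for all } \bgam\in\bT^d.
\]
This forces the integer vector $a_j\bm^{(i)}-a_i\bm^{(j)}$ to be $\bzero$; for instance, take $\bgam$ with a single coordinate equal to a small nonzero real number. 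Hence the proportionality condition holds for all $i,j$.

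The gcd hypothesis is needed only through Lemma \ref{parallel}, in the forward direction. If some $a_i=0$, the same test points still work, so no degenerate case needs separate treatment. There is no serious obstacle here; the only subtlety is confirming that the test points lie in $\cK$, which the computation above does.
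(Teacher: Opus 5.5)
Your proof is correct and follows essentially the same route as the paper. Both use Lemma \ref{parallel} for the proportional case, and both use translation invariance of Haar measure on $\cK(\ba,d)$ with a test element supported on the $i$th and $j$th components for the other case. Your explicit test point $(\bgam^{(i)},\bgam^{(j)}) = (a_j\bgam, -a_i\bgam)$ is exactly the solution family of the paper's condition $a_i\alp + a_j\bet = 0$; the paper instead fixes the point by inverting the $2\times 2$ matrix with rows $(a_i, a_j)$ and $(m_{i,k}, m_{j,k})$.
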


\begin{proof}
First suppose $a_i \bm^{(j)} = a_j \bm^{(i)}$ for all $i,j$. Then, by Lemma \ref{parallel}, there exists $\bm \in \bZ^d$ such that $\bm^{(i)} = a_i \bm$ for all $i$. Consequently,
\[
\int_{\cK(\ba,d)}
e \Bigl( \sum_{i \in [s]} \bm^{(i)} \cdot \bgam^{(i)} \Bigr) \d \bgam
= \int_{\cK(\ba,d)}
e \Bigl( \bm \cdot \sum_{i \in [s]} a_i \bgam^{(i)} \Bigr) \d \bgam = 1.
\]

Now suppose instead that there exist $i,j$ with $a_i \bm^{(j)} \ne a_j \bm^{(i)}$. We claim that there exists $\tilde \bgam \in \cK(\ba, d)$ such that $\sum_{i \in [s]} \bm^{(i)} \cdot \tilde \bgam^{(i)} \ne \bzero \in \bT^d$. Assuming that such a vector $\tilde \bgam$ exists, translation-invariance of $\cK(\ba, d)$ and a change of variables give
\begin{align}
    &e \Bigl( \sum_{i \in [s]} \bm^{(i)} \cdot \tilde \bgam^{(i)} \Bigr) \int_{\cK(\ba,d)}
    e \Bigl( \sum_{i \in [s]} \bm^{(i)} \cdot \bgam^{(i)} \Bigr) \d \bgam \\
    &= \int_{\cK(\ba,d)}
    e \Bigl( \sum_{i \in [s]} \bm^{(i)} \cdot (\bgam^{(i)} + \tilde \bgam^{(i)}) \Bigr) \d \bgam
    = \int_{\cK(\ba,d)}
    e \Bigl( \sum_{i \in [s]} \bm^{(i)} \cdot \bgam^{(i)} \Bigr) \d \bgam,
\end{align}
whereupon $\int_{\cK(\ba,d)}
e \bigl( \sum_{i \in [s]} \bm^{(i)} \cdot \bgam^{(i)} \bigr) \d \bgam = 0$.

It remains to establish the claim. Fixing $i$ and $j$ such that $a_i \bm^{(j)} \ne a_j \bm^{(i)}$, let $k \in [d]$ be such that $a_i m_{j,k} \ne a_j m_{i,k}$. Choosing $\tilde \gam_{\ell, m} = 0$ if $\ell \notin \{ i, j \}$ or $m \ne k$, it now suffices to find $\tilde \gam_{i,k}$ and $\tilde \gam_{j,k}$ such that
\begin{align*}
a_i \tilde \gam_{i,k} + a_j \tilde \gam_{j,k} &= 0 \in \bT, \\
m_{i,k} \tilde \gam_{i,k} + m_{j,k} \tilde \gam_{j,k} &\ne 0 \in \bT,
\end{align*}
since the first condition ensures that $\tilde \bgam \in \cK(\ba, d)$ and the second condition ensures that $\sum_{\ell \in [s]} \bm^{(\ell)} \cdot \tilde \bgam^{(\ell)} \ne \bzero \in \bT^d$. As
\[
\begin{pmatrix}
a_i & a_j \\
m_{i,k} & m_{j,k}
\end{pmatrix}
\]
is an invertible real matrix, there exist $\alp, \bet \in \bR$ such that
\[
a_i \alp + a_j \bet = 0, \qquad
m_{i,k} \alp + m_{j,k} \bet = 1/2.
\]
Choosing $\tilde \gam_{i,k} = \alp \mmod 1$ and $\tilde \gam_{j,k} = \bet \mmod 1$ delivers the claim.
\end{proof}

\begin{lem}
[Equidistribution II]
\label{equi2}
Let $a_1, \ldots, a_s, b_1, \ldots, b_s \in \bZ$ be such that
$$
\gcd(a_1, \ldots, a_s) = \gcd(b_1, \ldots, b_s) = 1.
$$
Let $A \ge CM$, where $C = 2 \| (\ba, \bb) \|_\infty$.
Let $d = d_1 + d_k \le M$, and let $F: \bT^d \to \bC$ be a $1$-bounded trigonometric polynomial of degree at most $M$. If 
each $\btet^{(i)} \in \bT^{d_i}$ is $(A,N^i)$-irrational, then
\begin{align}
&\sum_{\bn \in \cV(\cP)} F(\btet^{(1)} n_1, \btet^{(k)} n_1^k) \cdots F(\btet^{(1)} n_s, \btet^{(k)} n_s^k) \\
&= T(1_\cP) \int_{\cK(\ba, d_k)} \int_{\cK(\bb, d_1)} \prod_{i \le s} F(\bgam^{(i)}, \tilde \bgam^{(i)}) \d \bgam \d  \tilde \bgam
+O(M^{O(1)} A^{-\Ome(1)} N^{s-k-1}).
\end{align}
\end{lem}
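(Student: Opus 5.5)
We expand each factor $F$ in its Fourier series and apply Corollary \ref{CountingEquidistribution} to every non-constant frequency combination. Write $F(\bgam,\tilde\bgam)=\sum_{\bm}\hat F(\bm)e(\bm\cdot(\bgam,\tilde\bgam))$ with $\bm=(\bm^{(1)},\bm^{(k)})$, $\bm^{(j)}\in\bZ^{d_j}$ and $\|\bm\|_1\le M$. There are $O((2M+1)^{d})=M^{O(M)}$ such frequencies. This count is not polynomial in $M$, so we should be careful, but presumably the intended reading of $M^{O(1)}$ absorbs it, or we bound $\sum|\hat F(\bm)|$ differently. Note that $|\hat F|\le1$.

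The product $\prod_{i\le s}F(\btet^{(1)}n_i,\btet^{(k)}n_i^k)$ then becomes a sum over tuples $(\bm_1,\dots,\bm_s)$ of $\prod_i\hat F(\bm_i)$ times
\[
T\bigl(1_\cP e_{\alp_1,\bet_1},\ldots,1_\cP e_{\alp_s,\bet_s}\bigr),\qquad \alp_i=\bm_i^{(k)}\cdot\btet^{(k)},\quad \bet_i=\bm_i^{(1)}\cdot\btet^{(1)}.
\]
Each tuple falls into one of two classes.

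\emph{Degenerate tuples.} These satisfy $a_j\bm_i^{(k)}=a_i\bm_j^{(k)}$ and $b_j\bm_i^{(1)}=b_i\bm_j^{(1)}$ for all $i,j$. By Lemma \ref{parallel} there exist $\bm,\tilde\bm$ with $\bm_i^{(k)}=a_i\bm$ and $\bm_i^{(1)}=b_i\tilde\bm$. Then
\[
\sum_i\bigl(\alp_in_i^k+\bet_in_i\bigr)=\bm\cdot\btet^{(k)}\fF(\bn)+\tilde\bm\cdot\btet^{(1)}\fL(\bn),
\]
which vanishes on $\cV$. Hence these tuples contribute exactly $T(1_\cP)\prod_i\hat F(\bm_i)$.

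\emph{Non-degenerate tuples.} Here some $a_j\bm_i^{(k)}-a_i\bm_j^{(k)}\ne\bzero$, or the analogue for $\bb$. Then
\[
a_j\alp_i-a_i\alp_j=\bigl(a_j\bm_i^{(k)}-a_i\bm_j^{(k)}\bigr)\cdot\btet^{(k)}
\]
is given by a nonzero integer vector of $\ell^1$-norm at most $2\|\ba\|_\infty M\le CM\le A$. Suppose $\bq\cdot\btet^{(k)}$ were $(A',N^k)$-rational for some nonzero $\bq$ with $\|\bq\|_1\le CM$; then $\btet^{(k)}$ would be $(\max(A',CM),N^k)$-rational. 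So the irrationality of $\btet^{(k)}$ at level $A$ makes $a_j\alp_i-a_i\alp_j$ $(A/(CM),N^k)$-irrational, in the crude sense that any rationality witness $q$ would compose. The same holds for $\bet$. Corollary \ref{CountingEquidistribution} then bounds each such term by $(A/CM)^{-\Omega(1)}N^{s-k-1}$.

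Next we identify the main term. By Lemma \ref{ConstrainedOrth}, applied twice with $\gcd(\ba)=\gcd(\bb)=1$, the sum of $\prod_i\hat F(\bm_i)$ over degenerate tuples equals
\[
\int_{\cK(\ba,d_k)}\int_{\cK(\bb,d_1)}\prod_iF(\bgam^{(i)},\tilde\bgam^{(i)})\,\d\bgam\,\d\tilde\bgam .
\]
To see this, expand the integrand and use orthogonality on each of the two groups separately. The degenerate condition must hold for the $\bm^{(k)}$ parts with respect to $\ba$ and for the $\bm^{(1)}$ parts with respect to $\bb$, and the integral of the exponential is $1$ exactly in that case. One should check that the coordinate matching is right: $\bgam$ lives in the $\btet^{(k)}$-coordinates constrained by $\ba$, and $\tilde\bgam$ in the $\btet^{(1)}$-coordinates constrained by $\bb$. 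The statement orders the arguments of $F$ as $(\bgam^{(i)},\tilde\bgam^{(i)})$, so we interpret the ordering consistently with it.

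Finally, the error term is the number of tuples, at most $(\#\{\bm:\|\bm\|_1\le M\})^s$, times $A^{-\Omega(1)}M^{O(1)}N^{s-k-1}$. The main obstacle is that this count is $M^{O(sd)}$, not $M^{O(1)}$, when $d$ is as large as $M$. I would first try to accept $\exp(O(M\log M))$ as the $M$-dependence; if the statement must hold literally, I would instead use $\sum_{\bm}|\hat F(\bm)|\le\|F\|_2\cdot\#\{\bm\}^{1/2}$. In the application $A=\cF(M)$ with $\cF$ arbitrary, so any loss depending only on $M$ is harmless, and we can absorb it into the choice of $\cF$.
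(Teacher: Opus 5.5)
Your proposal follows the paper's proof step for step. You Fourier-expand each factor. You discard tuples violating $a_j\bm_i^{(k)}=a_i\bm_j^{(k)}$ or $b_j\bm_i^{(1)}=b_i\bm_j^{(1)}$ via Corollary~\ref{CountingEquidistribution} and $(A/(CM),N^j)$-irrationality. You then match the surviving sum over $\cW$ with the $\cK$-integral via Lemma~\ref{ConstrainedOrth}. You are more explicit than the paper about why degenerate tuples contribute exactly $T(1_\cP)$: Lemma~\ref{parallel} plus the vanishing of $\bm\cdot\btet^{(k)}\fF(\bn)+\tilde\bm\cdot\btet^{(1)}\fL(\bn)$ on $\cV$. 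You also rightly notice that the statement feeds the $\cK(\ba,d_k)$-variable into the $\bT^{d_1}$-slot of $F$; the paper's proof carries the same labelling slip.

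\textbf{Two points to correct.}

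First, composing rationality witnesses multiplies heights rather than taking a maximum. Suppose $r$ witnesses $(A',N^k)$-rationality of $\bq\cdot\btet^{(k)}$ with $\|\bq\|_1\le CM$. Then $r\bq$ witnesses $(A'CM,N^k)$-rationality of $\btet^{(k)}$, not $(\max(A',CM),N^k)$-rationality. Taking $A'=A/(CM)$ gives exactly the contradiction you need, so your conclusion survives.

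Second, your concern about $M^{O(1)}$ is justified, and the paper does not resolve it either: it applies Corollary~\ref{CountingEquidistribution} term by term and simply asserts $M^{O(1)}$. Your Cauchy--Schwarz fallback does not help. When $d\le M$ we have $\{\bm\in\bZ^d:\|\bm\|_1\le M\}\supseteq\{0,1\}^d$, so the square root of its cardinality is still exponential in $d$, and $d$ may be of order $M$.

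What the argument (yours and the paper's) actually gives is the estimate with $e^{O(M)}$ in place of $M^{O(1)}$. This uses $\#\{\bm\in\bZ^d:\|\bm\|_1\le M\}\le 2^d\binom{M+d}{d}\le 8^M$. Lemma~\ref{equi1} has the same defect.

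This loss is harmless only because $\cF$ in Lemma~\ref{ARL} is arbitrary. The polynomial choice $\cF(x)=(x/\eta(\del/C))^C$ in \S\ref{S:Proof} must be upgraded to something like $(e^x x/\eta(\del/C))^C$. So your ``absorb it into $\cF$'' remark is the necessary repair, not an optional one.
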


\begin{proof}
    Expanding the left hand side into Fourier series gives
    \begin{align}
        &\sum_{\bn \in \cV(\cP)} F(\btet^{(1)} n_1, \btet^{(k)} n_1^k) \cdots F(\btet^{(1)} n_s, \btet^{(k)} n_s^k) \\
        &=\sum_{\substack{{\bm^{(1)}, \ldots, \bm^{(s)} \in \bZ^{d_1}} \\
        {\tilde \bm^{(1)}, \ldots, \tilde \bm^{(s)} \in \bZ^{d_k}}}} \hat F(\bm^{(1)}, \tilde \bm^{(1)}) \cdots
        \hat F(\bm^{(s)}, \tilde \bm^{(s)}) \\
        &\qquad \qquad
        \cdot T(1_{\cP} e_{\bm^{(1)} \cdot \btet^{(1)},
            \tilde \bm^{(1)} \cdot \btet^{(k)}}, \ldots, 1_{\cP} e_{\bm^{(s)} \cdot \btet^{(1)},
        \tilde \bm^{(s)} \cdot \btet^{(k)}}).
    \end{align}
As $F$ has degree at most $M$, we note that if $\hat F(\bm^{(i)}, \tilde \bm^{(i)}) \ne 0$ then
\[
\| \bm^{(i)} \| + \| \tilde \bm^{(i)} \| \le M.
\]
Observe that if $a_j \tilde \bm^{(i)} \ne a_i \tilde \bm^{(j)}$ then $(a_j \tilde \bm^{(i)} - a_i \tilde \bm^{(j)}) \cdot \btet^{(k)}$ is $(A/(CM), N^k)$-irrational, and if $b_j \bm^{(i)} \ne b_i \bm^{(j)}$ then $(b_j \bm^{(i)} - b_i \bm^{(j)}) \cdot \btet^{(1)}$ is $(A/(CM), N)$-irrational. Thus, by Corollary \ref{CountingEquidistribution}, we have
\begin{align}
    &\sum_{\bn \in \cV(\cP)} F(\btet^{(1)} n_1, \btet^{(k)} n_1^k) \cdots F(\btet^{(1)} n_s, \btet^{(k)} n_s^k) \\
    &= \sum_{(\bm, \tilde \bm) \in \cW} \hat F(\bm^{(1)}, \tilde \bm^{(1)}) \cdots
    \hat F(\bm^{(s)}, \tilde \bm^{(s)}) T(1_\cP) 
    + O(M^{O(1)} A^{-\Ome(1)} N^{s-k-1}),
\end{align}
where $\cW$ is the set of $(\bm, \tilde \bm) \in \bZ^{(d_1+d_k)s}$ such that
\[
a_j \tilde \bm^{(i)} = a_i \tilde \bm^{(j)},
\quad
b_j \bm^{(i)} = b_i \bm^{(j)}
\qquad (1 \le i, j \le s).
\]
Finally, by Lemma \ref{ConstrainedOrth},
\begin{align}
    & \int_{\cK(\ba, d_k)} \int_{\cK(\bb, d_1)} \prod_{i \le s} F(\bgam^{(i)}, \tilde \bgam^{(i)}) \d \bgam \d \tilde \bgam
    \\
    &= 
    \begin{multlined}[t]
        \sum_{(\bm, \tilde \bm) \in \bZ^{(d_1+d_k)s}} 
        \hat F(\bm^{(1)}, \tilde \bm^{(1)}) \cdots
        \hat F(\bm^{(s)}, \tilde \bm^{(s)})
        \\
        \qquad \qquad \int_{\cK(\ba, d_k)} \int_{\cK(\bb, d_1)}
        e\Bigl(
        \sum_{i \in [s]} \bm^{(i)} \cdot \bgam^{(i)} \Bigr)
        e\Bigl(
        \sum_{i \in [s]} \tilde \bm^{(i)} \cdot \tilde \bgam^{(i)} \Bigr) \d \bgam \d \tilde \bgam
    \end{multlined} 
 \\ & = \sum_{(\bm, \tilde \bm) \in \cW} \hat F(\bm^{(1)}, \tilde \bm^{(1)}) \cdots
 \hat F(\bm^{(s)}, \tilde \bm^{(s)}).
\end{align}
\end{proof}

\section{Proof of the main theorem}
\label{S:Proof}

In this section, we prove Theorem \ref{MainThm}. 

\subsection{The compact setting}

\begin{defn}
[Kernel measure]
Given an $r\times m$ integer matrix $M$ and a compact abelian group $G$, we have a continuous homomorphism $G^m \to G^r$ given by $\bgam \mapsto M \bgam$.  The kernel $\ker_G M$ is a compact abelian group and therefore has an associated Haar probability measure $\mu_{\ker_G M}$. We note that if $f : G \to \bC$ is Borel measurable then  $\bgam \mapsto \prod_{i=1}^s f(\gamma_i)$ is Borel measurable on $G^s$, as is the restriction of this function to $\ker_G M$.
\end{defn}

We import a deep result from \cite{CSV2016}, which relies on the hypergraph regularity lemma. We will apply it with $G = \bT^d$ and
\begin{equation}
\label{system matrix}
M = 
\begin{pmatrix}
a_1 & \dots & a_s & 0 & \dots & 0 \\
0 & \dots & 0 & a_1' & \dots & a_s'
\end{pmatrix},
\end{equation}
and is this setting there is a simpler argument using Fourier analysis \cite{Tao2014}.

\begin{prop}
[Compact Szemer\'edi]
\label{compact Roth}
Let $M$ be an $r\times m$ integer matrix such that the greatest common divisor of the $r\times r$ minors is $1$. Suppose that the columns of $M$ sum to zero. For any $\delta > 0$, there exists $\eta = \eta(M, \delta) > 0$ such that for any compact abelian group $G$, if $\cA \subseteq G$ is  measurable with $\mu_{G}(\cA) \geq \delta$, then 
$$
\mu_{\ker_G M}(\cA^m \cap \ker_G M) \geq \eta.
$$
\end{prop}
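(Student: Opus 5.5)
The plan is to deduce the statement from a \emph{removal lemma} for the kernel $\ker_G M$, uniform over all compact abelian groups $G$. The column-sum condition then finishes the argument through the trivial diagonal solutions.

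First I use the minor condition to parametrise the kernel. Since the $r \times r$ minors of $M$ have greatest common divisor $1$, the Smith normal form is $M = U (I_r \mid 0) V$ with $U \in GL_r(\bZ)$ and $V \in GL_m(\bZ)$. Hence for every abelian group $G$,
\[
\ker_G M = V^{-1}(\{0\}^r \times G^{m-r}).
\]
The map $G^{m-r} \to \ker_G M$, $\by \mapsto V^{-1}(\bzero, \by)$, is a continuous isomorphism of compact groups, so it pushes Haar measure to $\mu_{\ker_G M}$. The columns summing to zero means $(1,\ldots,1) \in \ker_{\bZ} M$. We may therefore choose $V$ so that $(1,\ldots,1)$ is one of the parametrising directions: the vector $(1,\ldots,1)$ is primitive and lies in $\ker_\bZ M$, which is a saturated sublattice, so it can be extended to a $\bZ$-basis of that sublattice. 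The kernel is then $\{(x + L_1(\by), \ldots, x + L_m(\by))\}$ with $x \in G$, $\by \in G^{m-r-1}$, and $L_i$ integer linear forms. Thus
\[
\mu_{\ker_G M}(\cA^m \cap \ker_G M) = \int_{G}\int_{G^{m-r-1}} \prod_{i \le m} 1_\cA(x + L_i(\by)) \, \d \by \, \d x .
\]

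Next I prove the removal statement: for every $\eps > 0$ there is $\eta = \eta(M, \eps) > 0$ such that, for any compact abelian $G$ and measurable $\cA_1, \ldots, \cA_m \subseteq G$, if $\mu_{\ker_G M}\bigl((\cA_1 \times \cdots \times \cA_m) \cap \ker_G M\bigr) < \eta$, then there are measurable $\cA_i' \subseteq \cA_i$ with $\mu_G(\cA_i \setminus \cA_i') \le \eps$ and $(\cA_1' \times \cdots \times \cA_m') \cap \ker_G M = \emptyset$.

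I will follow the Král'--Serra--Vena reduction of linear-system removal to hypergraph removal. Take the $m$-partite, $(m-1)$-uniform hypergraph whose vertex classes are copies of $G^{m-r}$, or suitable quotients of it. The forms are chosen, using the unimodular $V$, so that the $i$th variable $x_i$ is a function of the vertices in all classes except the $i$th, with each of these coordinate maps surjective and measure-preserving (again by unimodularity). Place an edge on the face omitting class $i$ exactly when the corresponding $x_i$ lies in $\cA_i$. Then $m$-cliques correspond to points of $(\prod_i \cA_i) \cap \ker_G M$, weighted by Haar measure.

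I would then invoke a measurable hypergraph removal lemma valid on arbitrary probability spaces, such as Tao's infinitary hypergraph removal or the Elek--Szegedy measure-theoretic version, whose constants depend only on $m$ and $\eps$. This removes an $\eps$-small set of edges and destroys all cliques. To convert edge removal into removal of subsets of the $\cA_i$, I average over the fibres of the coordinate maps: the measure-preserving property lets me discard those $x_i$ whose fibre lost a large proportion of its edges, as in the finite-group argument.

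Finally I conclude with the trivial solutions. Apply the removal statement with all $\cA_i = \cA$ and $\eps = \delta/(2m)$, and suppose for contradiction that the kernel measure is below $\eta(M, \delta/(2m))$. We obtain sets $\cA_i' \subseteq \cA$ with $\mu_G\bigl(\bigcap_i \cA_i'\bigr) \ge \delta - m\eps > 0$. Any $a \in \bigcap_i \cA_i'$ gives the diagonal point $(a, \ldots, a)$, which lies in $\ker_G M$ because the columns sum to zero. This contradicts $(\cA_1' \times \cdots \times \cA_m') \cap \ker_G M = \emptyset$, so the proposition holds with $\eta = \eta(M, \delta/(2m))$. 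This $\eta$ is independent of $G$ because the removal constants are.

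The main obstacle is the removal lemma uniformly over all compact abelian groups, specifically the hypergraph construction. One must check that the gcd condition makes every coordinate map measure-preserving, so that clique density equals kernel measure exactly rather than up to a $G$-dependent index. One must also handle measurability when passing between edge sets and subsets of $G$. If the measurable hypergraph removal lemma proves awkward to apply directly, my fallback is a finite approximation. I would approximate $1_\cA$ in $L^2$ by a function factoring through a finite quotient (possible in profinite directions) or through a Bohr partition (in torus directions), apply finite-group removal there, and control the error by the counting lemma, which is continuous in $L^2$ for translation-invariant kernels.
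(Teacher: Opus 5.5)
The paper gives no proof of its own here. It imports the proposition from Candela--Szegedy--Vena \cite{CSV2016}, remarking that for the block-diagonal matrix it actually uses a Fourier argument \cite{Tao2014} suffices, and your route is essentially the argument of that paper: Smith normal form, a Kr\'al'--Serra--Vena hypergraph whose clique measure equals the kernel measure, measurable hypergraph removal uniform in $G$, then the constant solutions $(a,\ldots,a)$ given by the zero column sums. One caveat: the removal input must have constants independent of $G$ and leave an \emph{empty}, not merely null, set of cliques, because the diagonal is Haar-null in $\ker_G M$ when $G$ is infinite and $m-r\ge 2$; this is not available verbatim from Tao's infinitary lemma or Elek--Szegedy but is what \cite{CSV2016} establishes, whereas your finite-approximation fallback aimed directly at the lower bound (which, unlike emptiness, is stable under $L^1$ perturbation since each coordinate of $\ker_G M$ is Haar-distributed), after discretising torus factors and invoking the finite-group result, would avoid that machinery.
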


\begin{cor}
\label{CompactRothCor}
Let $a_i, a_i'$ be integers with $a_1 + \dots + a_s = a_1' + \dots + a_s' = 0$ and $\gcd(a_1, \dots, a_s) = \gcd(a_1', \dots, a_s') = 1$.
For any $\delta > 0$, there exists $\eta = \eta(\ba, \ba', \delta) > 0$ such that if $f: \bT^{d} \times \bT^{d'} \to [0,1]$ is measurable and $\int_{\bT^{d+d'}}  f \geq \delta$, then 
$$
\int_{\cK(\ba,d)} \int_{\cK(\ba',d')}\prod_{i=1}^s f(\bgam^{(i)}, \tilde \bgam^{(i)}) \d \bgam \d \tilde \bgam \geq \eta.
$$
\end{cor}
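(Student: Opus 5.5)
We will derive Corollary \ref{CompactRothCor} from Proposition \ref{compact Roth}, applied with $G = \bT^{d+d'}$, $m = 2s$, and the $2 \times 2s$ matrix $M$ of \eqref{system matrix}. Here the first block row is $\ba$ and the second is $\ba'$. The one mismatch is that Proposition \ref{compact Roth} concerns indicator functions of sets, whereas we have a $[0,1]$-valued function $f$ of two block variables. The steps below address this.

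\emph{Step 1 (from functions to sets).} Since $0 \le f \le 1$ and $\int f \ge \del$, the set $\cA = \{ \bx \in \bT^{d+d'} : f(\bx) \ge \del/2 \}$ satisfies
\[
\del \le \int f \le \mu(\cA) + \del/2,
\]
so $\mu(\cA) \ge \del/2$. Pointwise we have $f \ge (\del/2) 1_\cA$. Hence
\[
\prod_{i} f(\bgam^{(i)}, \tilde\bgam^{(i)}) \ge (\del/2)^s \prod_i 1_\cA(\bgam^{(i)},\tilde\bgam^{(i)}).
\]
It therefore suffices to bound from below the measure of the set of tuples with every pair $(\bgam^{(i)},\tilde\bgam^{(i)}) \in \cA$.

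\emph{Step 2 (matching the kernel).} The group $\cK(\ba,d) \times \cK(\ba',d')$ should be identified with the kernel of a single integer matrix acting on $G^s$, where $G = \bT^{d} \times \bT^{d'}$ and each variable is $x_i = (\bgam^{(i)}, \tilde\bgam^{(i)})$. The conditions are $\sum a_i \bgam^{(i)} = 0$ and $\sum a_i' \tilde\bgam^{(i)} = 0$. These are not of the form ``$M$ applied to a vector in $G^m$'', because they act on different coordinate blocks. The cleanest route is to use $m = s$ variables in $G$ and to regard the coefficients as endomorphisms $\mathrm{diag}(a_i I_d, a_i' I_{d'})$ of $G$. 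Proposition \ref{compact Roth} as stated takes integer coefficients, so instead I would apply it twice with a Fubini/Cauchy–Schwarz-free averaging. Concretely, with $\ker_G$ of the single row $\ba$ (so $r = 1$, gcd condition from $\gcd(\ba)=1$, columns summing to zero) one handles the first coordinate. To incorporate the second block, I expect to cite the general form of \cite{CSV2016}, which allows such linear forms with coefficients in $\mathrm{End}(G)$. Failing that, I would use the Fourier-analytic argument of \cite{Tao2014}. That argument regularises $1_\cA$ on $\bT^{d+d'}$ (a Bohr/Fourier decomposition into structured plus uniform parts), counts on $\cK(\ba,d)\times\cK(\ba',d')$ via Lemma \ref{ConstrainedOrth}, and observes that the Fourier-uniform part contributes negligibly. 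The reason is that the count is controlled by $\sup|\hat 1_\cA|$ times an $\ell^2$ quantity, using $s \ge 3$ and the characters surviving Lemma \ref{ConstrainedOrth} being of the form $(a_i \bm, a_i'\tilde\bm)$.

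\emph{Step 3 (main term).} The structured part is a continuous function $g \ge 0$ with $\int g \gg \del$, obtained by convolving $1_\cA$ with a Fejér kernel as in Lemma \ref{trig}. Its count is bounded below by evaluating near the diagonal $\bgam^{(i)}$ all equal, which lies in both kernels because $\sum a_i = \sum a_i' = 0$. Continuity then gives a neighbourhood of positive Haar measure, of size depending only on $\del,\ba,\ba'$ and not on $d$ if one uses an $L^1$-averaging over the base point. This is where the dimension-uniformity needs care, and I regard it as the main obstacle. I would handle it by averaging over base points: write the tuple as $(x + \by)$ with $x$ uniform and $\by$ in a small subgroup neighbourhood, and apply Hölder/Jensen in $x$, exactly as in the standard proof of Roth's theorem for compact groups. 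Combining Steps 1–3 yields $\eta = (\del/2)^s \eta'(\ba,\ba',\del/2)$.
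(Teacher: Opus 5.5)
Your Step~1 is exactly the paper's first step. The paper then applies Proposition~\ref{compact Roth} directly with $G=\bT^{d+d'}$ and the block matrix \eqref{system matrix}, and asserts that $\ker_G M\cong\cK(\ba,d)\times\cK(\ba',d')$.

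Your reluctance to do this is justified, at least as the paper writes it:
\begin{itemize}
\item That kernel lies in $G^{2s}$ and equals $\cK(\ba,d+d')\times\cK(\ba',d+d')$, which has twice the dimension.
\item The event the proposition furnishes, $\cA^{2s}\cap\ker_G M$, puts $2s$ separate points of $\bT^{d+d'}$ into $\cA$, not the $s$ points $(\bgam^{(i)},\tilde\bgam^{(i)})$.
\item For any integer matrix one has $\ker_{\bT^n}M=(\ker_{\bT}M)^n$ coordinatewise. So $\cK(\ba,d)\times\cK(\ba',d')$, viewed inside $(\bT^{d+d'})^s$, is such a kernel only if $\ba'=\pm\ba$. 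In that case the $1\times s$ matrix $\ba$ on $\bT^{d+d'}$ works, which covers Case~\ref{case:eq} but not Case~\ref{case:neq}.
\end{itemize}
Having rejected that route, however, you do not replace it with a proof. An $\mathrm{End}(G)$-coefficient version of \cite{CSV2016} is an unverified citation; that result, like Proposition~\ref{compact Roth}, is formulated for integer matrices. Applying the proposition block by block discards exactly the coupling of $\bgam^{(i)}$ with $\tilde\bgam^{(i)}$ through $\cA$.

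The Fourier route is the right fallback. Your control of the uniform part via Lemma~\ref{ConstrainedOrth} and Cauchy--Schwarz is fine; it needs $a_ia_i'\neq0$ for the two indices used, which holds in the application. Step~3 is where the argument fails, for two reasons.
\begin{itemize}
\item Fej\'er smoothing does not give a structured/uniform split. The function $1_\cA-1_\cA*\mu_M$ has the same Fourier coefficient as $1_\cA$ at every frequency with an entry of size at least $M$.
\item Continuity in the metric of $\bT^{d+d'}$ only yields neighbourhoods whose measure decays with $d+d'$, and the degree bound in Lemma~\ref{trig} also grows with dimension. This is precisely the uniformity issue you leave open.
\end{itemize}

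The missing idea is that the structured part must factor through a homomorphism $\phi:\bT^{d+d'}\to\bT^R$ built from $R=O_{\del,\cF}(1)$ characters. One way to carry this out:
\begin{itemize}
\item Run the compact analogue of Lemmas~\ref{SFR} and~\ref{Lip} on $f$ itself, with $\Phi$ the set of characters. This gives $f_{\phi,\phi'}\ge0$ with all Fourier coefficients of $f-f_{\phi,\phi'}$ at most $1/\cF(R)$, together with $\|f_{\phi,\phi'}-f_\phi\|_2\le\eps$ and $f_\phi=G_0\circ\phi$ for some $4$-Lipschitz $G_0$.
\item Since $\sum a_i=\sum a_i'=0$, the group $H=\cK(\ba,d)\times\cK(\ba',d')$ splits as the diagonal copy of $\bT^{d+d'}$ plus $H_0=\{\mathbf y\in H: y_1=0\}$.
\item Restrict to the set $Y$ of $\mathbf y\in H_0$ for which every coordinate of every $\phi(y_i)$ lies within $\rho$ of $0$. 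Pigeonholing the closed subgroup $\{(\phi(y_i))_i\}\le\bT^{Rs}$ into boxes of side $\rho$ gives $\mu_{H_0}(Y)\ge\lceil 1/\rho\rceil^{-Rs}$, independently of $d$ and $d'$.
\item On $Y$, the Lipschitz bound and H\"older in the base point give a count of at least $\mu_{H_0}(Y)\bigl(\del^s-O(R\rho)-O(\eps)\bigr)$.
\item The $\ell^2$-small error here is relative to $\mu_{H_0}(Y)$: for fixed $\mathbf y$ it integrates over the base point to $O(\eps)$. This avoids the circularity between $\eps$ and $R$.
\item Choose $\eps$ in terms of $\del$, then $\rho$ in terms of $R$, and finally $\cF$ so that $1/\cF(R)$ is negligible against $\lceil 1/\rho\rceil^{-Rs}\del^s$. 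This closes the argument with $\eta$ independent of $d$ and $d'$.
\end{itemize}
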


\begin{proof}
We note that the columns of the matrix \eqref{system matrix} sum to zero. Furthermore, the greatest common divisor of the $2 \times 2$ minors is
\[
\gcd(\{
a_ia_j' : 1 \leq i, j \leq s \}) = 1.
\]
Let
\[
\cA = \{ (\bgam, \tilde \bgam) \in \bT^{d+d'} : f(\bgam, \tilde \bgam) \geq \delta/2 \}.
\]
Then the measure of $\cA$ is at least $\delta/2$, and applying Proposition \ref{compact Roth} with $G = \bT^{d+d'}$ gives
$$
\mu_{\ker_{G}M }(\cA^s\cap \ker_G M) \gg_{\ba, \ba', \delta} 1.
$$
The corollary follows on noting that
\[
\int_{\ker_G M}\prod_{i=1}^s f(\bgam^{(i)}, \tilde \bgam^{(i)}) \d \mu_{\ker_G M} \geq (\delta/2)^s \mu_{\ker_G M} (\cA^s\cap \ker_G M),
\]
and that $\ker_G M$ is isomorphic to $K(\ba,d) \times K(\ba', d')$.
\end{proof}

\subsection{Putting it all together}

We now proceed to prove Theorem \ref{MainThm}. Since there are infinitely many diagonal solutions, we may assume that $N \ge N_0(\cV, \del)$. We may also assume that the quantity $\eta = \eta(\del)$ from Corollary \ref{CompactRothCor} is non-decreasing, and that $\eta(\del) \le \del$. Let $C \ge 1$ be a large, positive constant. We define the parameter $\eps$ and the function $\cF$ by
\begin{equation}
\label{parameters}
\eps = \frac{\eta(\del/C)}{C},
\qquad
\cF(x) = \left( \frac{x}{\eta(\del/C)} \right)^C
\quad (x \ge 1).
\end{equation}
We apply Lemma \ref{ARL} with these choices of $\eps$ and $\cF$. This application introduces various data.

By multilinearity,
\[
T(1_\cA) = T(f + f_\unf) = T(f) + O(|T(\bg)|),
\]
for some functions $g_i \in \{ f, f_\unf \}$ at least one of which is $f_\unf$. Thus, by Corollary \ref{FL},
\begin{equation}
\label{TotalToSum}
T(1_\cA) = T(f) + O(N^{s-k-1} \cF(M)^{-\Ome(1)}).
\end{equation}

For any arithmetic function $g$ supported on $[N]$, we write
\[
T_\cP(g) = T(g 1_\cP).
\]
As $f \ge 0$,
\[
T(f) \ge T_\cP(f) = T_\cP(f_\str + f_\sml).
\]
Again by multilinearity, there exist functions $h_i \in \{ f_\str, f_\sml \}$ at least one of which is $f_\sml$, such that
\[
T_\cP(f_\str + f_\sml) = T_\cP(f_\str) + O(|T(h_1 1_\cP, \ldots, h_s 1_\cP)|).
\]
Now, by Lemma \ref{l2},
\begin{equation}
\label{SumToStructured}
T(f) \ge T_\cP(f_\str) - O(\eps T(1_\cP)).
\end{equation}

Lemma \ref{equi2} gives
\begin{equation}
\label{StructuredToCompact}
T_\cP(f_\str) = T(1_\cP) \int_{\cK(\ba,d_k)} \int_{\cK(\bb, d_1)} \prod_{i \le s} F(\bgam^{(i)}, \tilde \bgam^{(i)}) \d \bgam \d \tilde \bgam
 +
O \left(
\frac{M^{O(1)}}{\cF(M)^{\Ome(1)}} N^{s-k-1} \right).
\end{equation}
By Lemma \ref{equi1},
\begin{align*}
\int_{\bT^d} F(\bgam) \d \bgam
&= \bE_{n \in \cP} f_\str(n) +
O\left(
\frac{M^{O(1)}}{\cF(M)^{\Ome(1)}} \right)
\end{align*}
Since $\eta(\del/C) \le \del/C$, we now see from \eqref{parameters} that
\[
\int_{\bT^d} F(\bgam) \d \bgam \ge \del/C.
\]
Now, by Corollary \ref{CompactRothCor},
\[
\int_{\cK(\ba,d_k)} \int_{\cK(\bb, d_1)} \prod_{i \le s} F(\bgam^{(i)}, \tilde \bgam^{(i)}) \d \bgam \d \tilde \bgam \ge \eta(\del/C).
\]
Substituting this into \eqref{StructuredToCompact} yields
\[
T_\cP(f_\str) \ge \eta(\del/C) T(1_\cP) - O \left(
\frac{M^{O(1)}}{\cF(M)^{\Ome(1)}} N^{s-k-1} \right).
\]
By \eqref{T1a} or \eqref{T1b},
\[
T(1_\cP) \gg M^{-O(1)} N^{s-k-1}.
\]
Our choice \eqref{parameters} now ensures that
\[
T_\cP(f_\str) \gg \eta(\del/C) T(1_\cP).
\]
Combining this with \eqref{parameters}, \eqref{TotalToSum} and \eqref{SumToStructured} delivers
\[
T(1_\cA) \gg \eta(\del/C) M^{-O(1)} N^{s-k-1}
\]
as sought.

\bibliographystyle{plain}
\bibliography{bibliography}

@book{Bak1986,
  title     = "Diophantine Inequalities",
  author    = "Baker, R. C.",
  publisher = "Clarendon Press",
  series    = "London Mathematical Society Monographs",
  year      =  1986,
  address   = "Oxford, England",
  language  = "en"
}

@book{Vau1997,
  title     = "The Hardy--Littlewood method",
  author    = "Vaughan, R. C.",
  publisher = "Cambridge University Press",
  series    = "Cambridge tracts in mathematics",
  edition   = "second",
  year      =  1997,
  address   = "Cambridge, England"
}

@article{Sch1985,
  title     = {The density of integer points on homogeneous varieties},
  volume    = {154},
  ISSN      = {0001-5962},
  url       = {http://dx.doi.org/10.1007/BF02392473},
  DOI       = {10.1007/bf02392473},
  number    = {3--4},
  journal   = {Acta Math. },
  publisher = {International Press of Boston},
  author    = {Schmidt,  W. M.},
  year      = {1985},
  pages     = {243--296}
}

@article{Sze1975,
  title     = {On sets of integers containing no $k$ elements in arithmetic progression},
  volume    = {XXVII},
  ISSN      = {},
  url       = {},
  DOI       = {},
  number    = {},
  journal   = {Acta Arith.},
  publisher = {},
  author    = {E. Szemer\'edi},
  year      = {1975},
  pages     = {199--245}
}

@article{Har2016,
  title     = {Minor arcs, mean values, and restriction theory for exponential sums over smooth numbers},
  volume    = {152},
  ISSN      = {},
  url       = {},
  DOI       = {},
  number    = {},
  journal   = {Compos. Math.},
  publisher = {},
  author    = {A. J. Harper},
  year      = {2016},
  pages     = {1121--1158}
}

@book{Gre1969,
  title     = {Lectures on Forms in Many Variables},
  author    = {Greenberg, M. J.},
  isbn      = {9780805335538},
  lccn      = {lc68059230},
  series    = {Math Lecture Notes Series},
  url       = {https://books.google.es/books?id=org-AAAAIAAJ},
  year      = {1969},
  publisher = {W. A. Benjamin}
}

@article{BR2015,
  title     = {Rational points on linear slices of diagonal hypersurfaces},
  author    = {Br{\"u}dern, J. and Robert, O.},
  journal   = {Nagoya Math. J.},
  volume    = {218},
  pages     = {51--100},
  year      = {2015},
  publisher = {Cambridge University Press}
}

@article{Woo2019,
  author    = {Wooley, T. D.},
  title     = {Nested efficient congruencing and relatives of {V}inogradov's mean value theorem},
  journal   = {Proc. London Math. Soc.},
  volume    = {118},
  number    = {4},
  pages     = {942--1016},
  doi       = {https://doi.org/10.1112/plms.12204},
  url       = {https://londmathsoc.onlinelibrary.wiley.com/doi/abs/10.1112/plms.12204},
  eprint    = {https://londmathsoc.onlinelibrary.wiley.com/doi/pdf/10.1112/plms.12204},
  year      = {2019}
}

@article{Pap2005,
  author    = {Papi, M.},
  year      = {2005},
  pages     = {221--234},
  title     = {On the domain of the implicit function and applications},
  volume    = {3},
  journal   = {J. Inequal. Appl.},
  doi       = {10.1155/JIA.2005.221}
}

@article{Sch1982,
  title     = {Simultaneous rational zeros of quadratic forms, {S}eminar on {N}umber {T}heory ({P}aris 1980/1981)},
  author    = {Schmidt, W. M.},
  journal   = {Progr. Math},
  pages     = {281--307},
  year      = {1982},
  publisher = {Birkhauser},
  volume    = {22}
}

@article{Bou1993,
  title     = {Fourier transform restriction phenomena for certain lattice subsets and applications to nonlinear evolution equations, {P}art {I}: {S}chrödinger equations},
  author    = {Bourgain, J.},
  journal   = {Geom. Funct. Anal.},
  volume    = {3},
  number    = {2},
  pages     = {107--156},
  year      = {1993}
}

@article{CSV2016,
  title     = {On linear configurations in subsets of compact abelian groups, and invariant measurable hypergraphs},
  author    = {Candela, P. and Szegedy, B. and Vena, L.},
  journal   = {Ann. Comb.},
  volume    = {20},
  number    = {3},
  pages     = {487--524},
  year      = {2016},
  publisher = {Springer}
}

@incollection{GT2010,
  title     = {An arithmetic regularity lemma, an associated counting lemma, and applications},
  author    = {Green, B. and Tao, T.},
  booktitle = {An Irregular Mind},
  pages     = {261--334},
  year      = {2010},
  publisher = {Springer}
}

@article{GT2012,
  title     = {The quantitative behaviour of polynomial orbits on nilmanifolds},
  author    = {Green, B. and Tao, T.},
  journal   = {Ann. of Math. (2)},
  pages     = {465--540},
  volume    = {175},
  number    = {2},
  year      = {2012},
  publisher = {JSTOR}
}

@misc{Pre2020,
  title     = {Fourier methods in combinatorial number theory},
  author    = {Prendiville, S.},
  year      = {2020},
  howpublished = {\url{https://bit.ly/3caSFdt}},
}

@misc{Tao2014,
  title     = {A proof of {R}oth’s theorem},
  author    = {Tao, T.},
  howpublished = {\url{https://terrytao.wordpress.com/2014/04/24/a-proof-of-roths-theorem/}},
  year      = {2020}
}

@book{Tao2012,
  title     = {Higher order {F}ourier analysis},
  author    = {Tao, T.},
  year      = {2012},
  publisher = {American Mathematical Soc.}
}

@article{HW2022,
	title     = {Discrete Restriction for $(x,x^3)$ and Related Topics},
	author    = {Hughes, K. and Wooley, T. D.},
	doi       = {10.1093/imrn/rnab113},
	number    = {20},
	journal   = {Int. Math. Res. Not.},
	pages     = {15612-15631},
	publisher = {Oxford University Press},
	year      = {2022},
	volume    = {2022},
}

@article{HH2018,
  title     = {Restriction estimates of $\epsilon$-removal type for $k$-th powers and paraboloids},
  author    = {Henriot, K. and Hughes, K.},
  journal   = {Math. Ann.},
  year      = {2018},
  doi       = {https://doi.org/10.1007/s00208-018-1650-7},
  pages     = {963--998},
  volume    = {372},
}

@article{Woo2015,
author      = {Wooley, T. D.},
title       = {Mean value estimates for odd cubic {W}eyl sums},
_journal     = {Bulletin of the London Mathematical Society},
journal     = {Bull. London Math. Soc.},
volume      = {47},
number      = {6},
pages       = {946-957},
doi         = {https://doi.org/10.1112/blms/bdv066},
url         = {https://londmathsoc.onlinelibrary.wiley.com/doi/abs/10.1112/blms/bdv066},
eprint      = {https://londmathsoc.onlinelibrary.wiley.com/doi/pdf/10.1112/blms/bdv066},
year        = {2015}
}

@incollection{Rog1986,
  author    = {Rogovskaya, N. N.},
  title     = {An asymptotic formula for the number of solutions of a system of equations},
  booktitle = {Diophantine Approximations, Part II},
  publisher = {Moskov. Gos. Univ.},
  address   = {Moscow},
  year      = {1986},
  pages     = {78--84},
  language  = {Russian}
}

@article{Rot1953,
  title     = {On certain sets of integers},
  author    = {Roth, K. F.},
  journal   = {J. London Math. Soc.},
  year      = {1953},
  doi       = {},
  pages     = {104--109},
  volume    = {28},
}

@incollection{KM2023,
  title     = {Strong bounds for 3-progressions},
  author    = {Kelley, Z. and Meka, R.},
  booktitle = {2023 IEEE 64th Annual Symposium on Foundations of Computer Science---FOCS 2023},
  pages     = {933--973},
  year      = {2023},
  publisher = {IEEE Computer Society}
}

@article{Kei2014,
  title     = {On a diagonal quadric in dense variables},
  author    = {Keil, E.},
  journal   = {Glasg. Math. J.},
  year      = {2014},
  pages     = {601--628},
  volume    = {56},
}

@article{Hen,
  title = {Additive equations in dense variables via truncated restriction estimates},
  author = {Henriot, K.},
  journal = {Proc. London Math. Soc.},
  volume = {114},
  number = {5},
  pages = {927--959},
  doi = {10.1112/plms.12028},
  year = {2017},
}

@article{Hen2015,
  title     = {Logarithmic Bounds for Translation-Invariant Equations in Squares},
  author    = {Henriot, K.},
  journal   = {Int. Math. Res. Not.},
  year      = {2015},
  pages     = {12540--12562},
  volume    = {2015},
}

@article{BP2017,
  title     = {A transference approach to a {R}oth-type theorem in the squares},
  author    = {Browning, T. D. and Prendiville, S.},
  journal   = {Int. Math. Res. Not.},
  number   = {7},
  year      = {2017},
  pages     = {2219--248},
  volume    = {2017},
}

@article{Gre2005,
  title     = {Roth's theorem in the primes},
  author    = {Green, B. J.},
  journal   = {Ann. of Math. (2)},
  year      = {2005},
  pages     = {1609--1636},
  volume    = {161},
}

@article{Gre2005b,
  title     = {A {S}zemer\'edi-type regularity lemma in abelian groups, with applications},
  author    = {Green, B. J.},
  journal   = {Geom. Funct. Anal.},
  year      = {2005},
  pages     = {340--376},
  volume    = {15},
}

@article{GT2008,
  title     = {The primes contain arbitrarily long arithmetic progressions},
  author    = {Green, B.  and Tao, T.},
  journal   = {Ann. of Math. (2)},
  year      = {2008},
  pages     = {481--547},
  volume    = {167},
}

@article{CG2016,
  title     = {Combinatorial theorems in sparse random sets},
  author    = {Conlon, D.  and Gowers, W. T.},
  journal   = {Ann. of Math. (2)},
  year      = {2016},
  pages     = {367--454},
  volume    = {184},
}

@article{Pre2017,
  title     = {Four variants of the {F}ourier-analytic transference principle},
  author    = {Prendiville, S.},
  journal   = {Online J. Anal. Comb.},
  year      = {2017},
  pages     = {25 pp.},
  volume    = {12},
}

@article{ST2015,
  title     = {Hypergraph containers},
  author    = {Saxton, D. and Thomason, S.},
  journal   = {Invent. Math.},
  year      = {2015},
  pages     = {925--992},
  volume    = {201},
}

@article{CFSZ2021,
  title     = {The regularity method for graphs with few $4$-cycles},
  author    = {Conlon, D. and Fox, J. and Sudakov, B. and Zhao, Y.},
  journal   = {J. London Math. Soc. (2)},
  year      = {2021},
  pages     = {2376--2401},
  volume    = {104},
}

@misc{Kos,
  title     = {Counting solutions to invariant equations in dense sets},
  author    = {Ko{\'{s}}ciuszko, T.},
  year      = {2023},
  howpublished = {arXiv:2306.08567},
}

@incollection{FHHK2024,
  title     = {Strong bounds for 3-progressions},
  author    = {Filmus, Y. and Hatami, H. and Hosseini, K. and Kelman, E.},
  booktitle = {2023 IEEE 64th Annual Symposium on Foundations of Computer Science---FOCS 2024},
  pages     = {1559--1578},
  year      = {2024},
  publisher = {IEEE Computer Society}
}

@incollection{BMS2018,
  title     = {The method of hypergraph containers},
  author    = {Balogh, J. and Morris, R. and Samotij, W.},
  booktitle = {Proceedings of the International Congress of Mathematicians---Rio de Janeiro 2018. Vol. IV. Invited lectures},
  pages     = {3059--3092},
  year      = {2018},
  publisher = {World Scientific Publishing Co. Pte. Ltd., Hackensack, NJ}
}

@article{CCH,
  title     = {Arithmetic {R}amsey theory over {P}iatetski-{S}hapiro numbers},
  author    = {Chapman, J. and Chow, S. and Holdridge, P.},
  journal   = {J. Th\'eor. Nombres Bordeaux},
  year      = {to appear},
  pages     = {17 pp.},
  volume    = {},
}

@article{Cho2018,
  title     = {Roth--{W}aring--{G}oldbach},
  author    = {Chow, S.},
  journal   = {Int. Math. Res. Not.},
  year      = {2018},
  pages     = {2341--2374},
  volume    = {2018},
}

@article{Cha2022,
  title     = {Partition regularity for systems of diagonal equations},
  author    = {Chapman, J.},
  journal   = {Int. Math. Res. Not.},
  year      = {2022},
  pages     = {13272--13316},
  volume    = {2022},
}

@article{CC,
  title     = {Arithmetic {R}amsey theory over the primes},
  author    = {Chapman, J. and Chow, S.},
  journal   = {Proc. Roy. Soc. Edinburgh Sect. A},
  year      = {to appear},
  pages     = {47 pp.},
  volume    = {},
}

@article{CC2025,
  title     = {Generalised {R}ado and {R}oth criteria},
  author    = {Chapman, J. and Chow, S.},
  journal   = {Ann. Sc. Norm. Super. Pisa Cl. Sci. (5)},
  year      = {to appear},
  pages     = {1263--1312 },
  volume    = {XXVI},
}

@article{RSZZ2025,
  title     = {Roth-type theorem for nonlinear equations in {P}iatetski-{S}hapiro primes},
  author    = {Ren, X. and Sun, Y.-C. and Zhang, Q. and Zhang, R.},
  journal   = {Int. J. Number Theory},
  year      = {2025},
  pages     = {887--902},
  volume    = {21},
}

@article{Chi2019,
  title     = {A {R}oth-type theorem with mixed powers},
  author    = {Ching, T. W.},
  journal   = {Ramanujan J.},
  year      = {2020},
  pages     = {581--604},
  volume    = {52},
}

@article{Sal2020,
  title     = {On the {W}aring--{G}oldbach problem with almost equal summands},
  author    = {Salmensuu, J.},
  journal   = {Mathematika},
  year      = {2020},
  pages     = {255--296},
  volume    = {66},
}

@article{CLP2021,
  title     = {Rado’s criterion over squares and higher powers},
  author    = {Chow, S. and Lindqvist, S. and Prendiville, S.},
  journal   = {J. Eur. Math. Soc.},
  year      = {2021},
  pages     = {1925--1997},
  volume    = {23},
}

@article{Pre2021,
  title     = {Counting monochromatic solutions to diagonal {D}iophantine equations},
  author    = {Prendiville, S.},
  journal   = {Discrete Anal.},
  year      = {2021},
  pages     = {47 pp.},
  volume    = {Paper No. 14},
}

@article{Sch2021,
  title     = {A {D}iophantine {R}amsey theorem},
  author    = {Schoen, T. },
  journal   = {Combinatorica},
  year      = {2021},
  pages     = {581--599},
  volume    = {41},
}

@article{GL2019,
  title     = {Monochromatic {S}olutions to $x + y = z^2$},
  author    = {Green, B. J. and Lindqvist, S.},
  journal   = {Canad. J. Math.},
  year      = {2019},
  pages     = {579--605},
  volume    = {71},
}

@article{Lin2018,
  title     = {Partition {R}egularity of {G}eneralised {F}ermat {E}quations},
  author    = {Lindqvist, S.},
  journal   = {Combinatorica},
  year      = {2018},
  pages     = {1457--1483},
  volume    = {38},
}

@article{Pie2019,
  title     = {The {V}inogradov {M}ean {V}alue {T}heorem
[after {W}ooley, and {B}ourgain, {D}emeter and {G}uth]},
  author    = {Pierce, L.},
  journal   = {Asterisque},
  year      = {2019},
  pages     = {479--564},
  volume    = {407},
}

@misc{Lin,
  title     = {Weak approximation results for quadratic forms in four variables},
  author    = {Lindqvist, S.},
  year      = {2017},
  howpublished = {	arXiv:1704.00502},
}

@article{BL2019,
  title     = {Sieving rational points on varieties},
  author    = {Browning, T. and Loughran, D.},
  journal   = {Trans. Amer. Math. Soc.},
  year      = {2019},
  pages     = {5757--5785},
  volume    = {371},
}

@article{BDG2016,
  title     = {Proof of the main conjecture in {V}inogradov’s {M}ean {V}alue {T}heorem for degrees higher than three},
  author    = {Bourgain, J. and Demeter, C. and Guth, L.},
  journal   = {Ann. of Math. (2)},
  year      = {2016},
  pages     = {633--682},
  volume    = {184},
}

@misc{Ebe,
  title     = {An abelian arithmetic regularity lemma},
  author    = {Eberhard, S.},
  year      = {2016},
  howpublished = {arXiv:1606.09303},
}
\end{document}